\newtheorem{thm}{Theorem}[section]
\newtheorem{lem}[thm]{Lemma}
\newtheorem{prop}[thm]{Proposition}
\newtheorem{cor}[thm]{Corollary}
\theoremstyle{definition}
\theoremstyle{remark}
\newtheorem{rem}[thm]{Remark}
\newtheorem{ex}[thm]{Example}
\newcommand{\FF}{\mathbb{F}}
\newcommand{\NN}{\mathbb{N}}
\newcommand{\ZZ}{\mathbb{Z}}
\newcommand{\GL}{\mathrm{GL}}
\DeclareMathOperator{\Gal}{Gal}
\DeclareMathOperator{\Ind}{Ind}
\newcommand{\im}{\mathrm{im}}
\renewcommand{\check}{\tilde}
\renewcommand{\lhd}{\unlhd}
\renewcommand{\rhd}{\unrhd}
\begin{document}

\title{Hilbertian fields and Galois representations}

\author{Lior Bary-Soroker
\and Arno Fehm
\and Gabor Wiese
}

\maketitle

\begin{abstract}
We prove a new Hilbertianity criterion for fields in towers
whose steps are Galois with Galois group either abelian or a product of finite simple groups.
We then apply this criterion to fields arising from Galois representations. 
In particular we settle a conjecture of Jarden on abelian varieties. 
\end{abstract}

\section{Introduction}

A field $K$ is called {\bf Hilbertian} if for every irreducible polynomial $f(t,X) \in K[t,X]$ that is separable in $X$ there exists $\tau \in K$ such that $f(\tau,X) \in K[X]$ is irreducible. 
This notion stems from Hilbert's irreducibility theorem, 
which, in modern terminology, asserts that number fields are Hilbertian.
Hilbert's irreducibility theorem and Hilbertian fields play an important role in algebra and number theory, in particular in Galois theory and arithmetic geometry, 
cf.\ \cite{FriedJarden}, \cite{Lang}, \cite{MalleMatzat},  \cite{Serre}, \cite{SerreMordell},  \cite{Volklein}.

In the light of this, an important question is under what conditions an extension of a  Hilbertian field is Hilbertian. 
As central examples we mention Kuyk's theorem \cite[Theorem 16.11.3]{FriedJarden}, which asserts that an abelian extension of a Hilbertian field is Hilbertian,
and Haran's diamond theorem \cite{HaranDiamond}, which is the most advanced result in this area. 

In this work we introduce an invariant of profinite groups that we call {\bf abelian-simple length} (see Section \ref{sec:gds} for definition and basic properties)
and prove a Hilbertianity criterion for extensions whose Galois groups have finite abelian-simple length:

\begin{thm}\label{thm:criteria}
Let $L$ be an algebraic extension of a Hilbertian field $K$. Assume there exists a tower of field extensions
\[
K=K_0 \subseteq K_1 \subseteq \cdots \subseteq K_m
\]
such that $L\subseteq K_m$ and
for each $i$ the extension $K_{i}/K_{i-1}$ is Galois with group either abelian or a (possibly infinite) product of  finite simple groups.
Then $L$ is Hilbertian.
\end{thm}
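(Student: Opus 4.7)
My plan is to induct on $m$. The base case $m = 0$ is trivial ($L = K$). The inductive step reduces to the following single-step claim, which is the heart of the argument: \emph{if $E$ is a Hilbertian field and $M/E$ is a Galois extension whose Galois group $G$ is either abelian or a (possibly infinite) product of finite simple groups, then every field $L$ with $E \subseteq L \subseteq M$ is Hilbertian.} Granted this, I would apply the induction hypothesis to the truncated tower (giving Hilbertianity of $K_{m-1}$), pass from $L$ to $LK_{m-1}$ (which contains $K_{m-1}$ and lies in $K_m$), apply the single-step claim with $E = K_{m-1}$ and $M = K_m$ to obtain Hilbertianity of $LK_{m-1}$, and descend back to $L$ via a strengthened induction that tracks $L \cap K_i$ along the tower.

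In the abelian case of the single-step claim, every subgroup of $G$ is normal, so $L/E$ is Galois with abelian Galois group and Kuyk's theorem \cite[Theorem~16.11.3]{FriedJarden} gives Hilbertianity.

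For the product-of-simple case, write $G = \prod_{j \in J} S_j$ and let $H \leq G$ be the closed subgroup fixing $L$. If $L/E$ is finite, $L$ is Hilbertian by the standard fact that finite separable extensions preserve Hilbertianity. Otherwise I would invoke Haran's diamond theorem \cite{HaranDiamond}: for a partition $J = J_1 \sqcup J_2$, the normal subgroups $N_i := \prod_{j \in J_i^c} S_j$ satisfy $N_1 \cap N_2 = 1$, so the fixed fields $M_i := M^{N_i}$ are Galois over $E$ with compositum $M_1 M_2 = M$; if $N_i \not\leq H$ for $i = 1, 2$, Haran's theorem yields Hilbertianity of $L$.

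The main obstacle is producing such a partition. I would analyse $I := \{j \in J : S_j \leq H\}$, where $S_j$ is embedded in $G$ as the $j$th factor. Closedness of $H$ combined with the density of $\bigoplus_{j \in J} S_j$ in $G$ in the profinite topology shows that $I = J$ would force $H = G$, hence $L = E$; so we may pick $j^* \in J \setminus I$. The partition $J_1 = \{j^*\}$, $J_2 = J \setminus \{j^*\}$ then satisfies $N_2 = S_{j^*} \not\leq H$ automatically. If also $N_1 = \prod_{j \neq j^*} S_j \not\leq H$, Haran's theorem applies directly. If instead $N_1 \leq H$, then $L$ lies inside $M^{N_1}$, a finite Galois extension of $E$ with group $S_{j^*}$, contradicting that $L/E$ is infinite. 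This combinatorial-structural analysis of closed subgroups of infinite products of finite simple groups is where I expect the main technical work, while the descent bookkeeping in the induction is a secondary but nontrivial point.
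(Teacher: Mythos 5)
Your single-step claim is correct, and your sketch of it works: the abelian case is Kuyk's theorem, and for a product of finite simple groups the partition $J_1=\{j^*\}$, $J_2=J\setminus\{j^*\}$ together with Haran's diamond theorem does the job (if $\prod_{j\neq j^*}S_j\leq H$, then $H$ has finite index and $L/E$ is finite). But you have misjudged where the difficulty lies. The descent step you call ``secondary but nontrivial'' is in fact the entire content of the theorem, and as written it cannot be carried out. Hilbertianity is not inherited by subfields, so $LK_{m-1}$ Hilbertian does not give $L$ Hilbertian. The ``strengthened induction tracking $L\cap K_i$'' does not close either: the induction would hand you that $L\cap K_{m-1}$ is Hilbertian, but $K_m/(L\cap K_{m-1})$ need not be Galois (conjugates of $K_m$ over $L\cap K_{m-1}$ can leave $K_m$), so the single-step claim cannot be applied with base $L\cap K_{m-1}$; and passing to the Galois closure of $K_m$ over $L\cap K_{m-1}$ just reproduces the original problem one level up, now with a metabelian (or longer) group. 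Note that the paper explicitly states that even $m=2$ with two abelian steps was previously open, while the tools your induction relies on — Kuyk plus the diamond theorem — have both been available since 1999; this is a strong indication that the induction cannot be closed by bookkeeping alone.

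The paper's proof avoids this induction entirely. It introduces the abelian-simple length $l(G)$ and shows (Lemma~\ref{lem:subnormal}) that the hypothesis of Theorem~\ref{thm:criteria} forces the Galois closure of $L/K$ to have finite abelian-simple length; all the work is then concentrated in Theorem~\ref{thm:bdgdlimplieshilb}, that any separable algebraic extension of a Hilbertian field of finite abelian-simple length is Hilbertian. Crucially, that theorem is not proved from the diamond theorem but from Haran's twisted wreath product criterion (Theorem~\ref{thm:haran}) — the finer tool from which Haran originally derived the diamond theorem — combined with the new quantitative Proposition~\ref{prop:m-sigmalength}: if $[G^{(m)}G_0:G_0]>2^m$, then $(A\wr_{G_0}G)^{(m+1)}\cap\Ind_{G_0}^G(A)\neq 1$. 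That growth estimate for abelian-simple length in twisted wreath products is precisely the leverage the descent needs and the diamond theorem cannot supply; to repair your proposal you would essentially have to rediscover it.
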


In the special case where $m=1$ and $K_1/K_0$ is abelian we recover Kuyk's theorem.
Already the case when $m=2$ and both $K_1/K_0$ and $K_2/K_1$ are abelian is new. 
As a first application of this criterion we then prove Hilbertianity of certain extensions arising from Galois representations:
  
\begin{thm}\label{thm:main}
Let $K$ be a Hilbertian field, let $L/K$ be an algebraic extension, let $n$ be a fixed integer, and for each prime number $\ell$ let 
$$
 \rho_\ell \colon \Gal(K) \to \GL_n(\overline{\mathbb{Q}}_\ell)
$$ 
be a Galois representation.  Assume that $L$ is fixed by $\bigcap_\ell\ker\rho_\ell$. Then $L$ is Hilbertian.
\end{thm}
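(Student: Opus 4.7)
The plan is to apply Theorem \ref{thm:criteria}: I will exhibit a tower $K = K_0 \subseteq K_1 \subseteq \cdots \subseteq K_m$ of length $m$ bounded in terms of $n$ alone, such that $L \subseteq K_m$ and each step $K_i/K_{i-1}$ is Galois with group abelian or a product of finite simple groups. Replacing $L$ by its Galois closure over $K$ (still fixed by the normal subgroup $\bigcap_\ell \ker\rho_\ell$), we may assume $L/K$ is Galois, so that $\Gal(L/K)$ embeds into $\prod_\ell G_\ell$ via $(\rho_\ell)_\ell$, where $G_\ell := \rho_\ell(\Gal(K)) \subseteq \GL_n(\overline{\QQ}_\ell)$ is compact.

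The core of the argument is a uniform structural result: each compact subgroup $G \subseteq \GL_n(\overline{\QQ}_\ell)$ admits a subnormal series $G = G^{(0)} \supseteq G^{(1)} \supseteq \cdots \supseteq G^{(m)} = 1$ of length $m \leq C(n)$, where each quotient $G^{(i-1)}/G^{(i)}$ is either abelian or a product of finite simple groups, with the type of quotient at each level aligned across all $\ell$. To establish this, I would first place $G$ inside $\GL_n(\mathcal{O})$ for the ring of integers $\mathcal{O}$ of a finite extension of $\QQ_\ell$, then reduce modulo the maximal ideal, giving an open normal pro-$\ell$ subgroup $P$ with finite quotient $\overline{G} \subseteq \GL_n(\FF)$ over the residue field. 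A structure theorem of Larsen--Pink type handles $\overline{G}$, while the $\ell$-adic Lie theory and Levi decomposition of the Zariski closure of $G$ should handle $P$.

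With such a local structural theorem in hand, I would assemble the global tower by setting $K_i$ to be the fixed field of $\bigcap_\ell \rho_\ell^{-1}(G_\ell^{(i)})$. Then $\Gal(K_i/K_{i-1})$ embeds into $\prod_\ell G_\ell^{(i-1)}/G_\ell^{(i)}$, which is abelian or a product of finite simple groups by construction. Since $\bigcap_\ell \rho_\ell^{-1}(G_\ell^{(m)}) = \bigcap_\ell \ker\rho_\ell$ fixes $L$, we have $L \subseteq K_m$, and Theorem \ref{thm:criteria} yields that $L$ is Hilbertian.

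The main obstacle will be the pro-$\ell$ factor $P$: principal congruence subgroups of $\SL_n(\ZZ_\ell)$ have only abelian (in fact elementary $\ell$-) finite quotients, while their natural congruence filtration has infinitely many abelian factors, so a naive subnormal decomposition has infinite length. Overcoming this requires exploiting the Levi structure of the Zariski closure so that the semisimple pieces are absorbed into the product-of-finite-simple step coming from the residue quotient $\overline{G}$, with the solvable radical and the congruence defects contributing bounded-length abelian steps. Producing a uniform bound on the total length, independent of $\ell$, is the heart of the technical argument.
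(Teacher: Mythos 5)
Your plan rests on a ``uniform structural result'' asserting that every compact subgroup $G\subseteq\GL_n(\overline{\QQ}_\ell)$ admits a subnormal series of length $\leq C(n)$ with abelian or product-of-simple quotients. By Lemma~\ref{lem:subnormal} this would say that $G$ has finite abelian-simple length bounded by $C(n)$, and that is false already for $n=2$: take $G=\SL_2(\ZZ_\ell)$, or its principal congruence subgroup $P$. Since $P$ is pro-$\ell$, its only finite simple quotients are $\ZZ/\ell$, so its generalized derived series is just its derived series; but $P$ is open in the semisimple $\ell$-adic Lie group $\SL_2(\QQ_\ell)$ and hence is not solvable. Thus $l(P)=\infty$, and by Lemma~\ref{lem:basicpropertiesofmSigma}.\ref{cond2:basicpropertiesofmSigma} also $l(G)=\infty$. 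You do flag this difficulty yourself, but the remedy you gesture at -- absorbing the semisimple piece into the product-of-simple step and having ``congruence defects contribute bounded-length abelian steps'' -- cannot work: the pro-$\ell$ congruence kernel has no non-abelian simple quotients, so it cannot contribute to a product-of-simple step, and being non-solvable it genuinely requires infinitely many abelian steps. Consequently there is no global tower of length $m(n)$ exhausting $\Gal(L/K)$, and an appeal to Theorem~\ref{thm:criteria} alone cannot close the argument.

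The paper's actual proof goes around this obstruction rather than through it. For each $\ell$ it applies Larsen--Pink (via Corollary~\ref{cor:subgrp}) to split off a pro-$\ell$ normal subgroup $\Gal(K_\ell/N_\ell)$, with $\Gal(N_\ell/K)$ of abelian-simple length $\leq m(n)$; the compositum $E=\prod_\ell N_\ell$ then has bounded abelian-simple length by Proposition~\ref{prop:lengthN}, so $E/K$ is an $\mathcal{H}$-extension by Corollary~\ref{cor:hextension}. The leftover pro-$\ell$ pieces $K_\ell E/E$ are handled not by any length bound but by \emph{linear disjointness}: being pro-$\ell$ for pairwise distinct primes $\ell$, they are linearly disjoint over $E$, and each $K_\ell/K$ is an $\mathcal{H}$-extension because $\im(\rho_\ell)$ is a finitely generated profinite group (Lemma~\ref{lem:GLnQl} plus a theorem of Jarden). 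Proposition~\ref{lem:obs}, which ultimately rests on Haran's diamond theorem, then gives that $\prod_\ell K_\ell$ is an $\mathcal{H}$-extension, whence $L$ is Hilbertian. The ingredient you are missing, then, is not a finer structure theorem for compact $\ell$-adic groups, but the linear-disjointness mechanism that lets one bypass the unbounded pro-$\ell$ part entirely.
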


This theorem is a strengthening of a recent result of Thornhill,
who in \cite{Thornhill} proves Theorem~\ref{thm:main} under the extra assumptions that $L$ is Galois over $K$ and the image of $\rho_\ell$ is in $\GL_n(\ZZ_\ell)$. 

Both Thornhill's and our research was motivated by a conjecture of Jarden. Namely, when applying Theorem~\ref{thm:main} to the family of Galois representations $\rho_\ell \colon \Gal(K)\to \GL_{2\dim(A)}(\mathbb{Z}_\ell)$ coming from the action of the absolute Galois group $\Gal(K)$ on the Tate module $T_\ell(A)$ of an abelian variety $A$ over $K$, 
Theorem~\ref{thm:main} immediately implies the following result.

\begin{thm}[Jarden's conjecture]\label{conj:Jarden}
Let $K$ be a Hilbertian field and $A$ an abelian variety over $K$. Then every field $L$ between $K$ and $K(A_{tor})$,
the field generated by all torsion points of $A$, is Hilbertian.
\end{thm}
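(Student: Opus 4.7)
The plan is to deduce Theorem~\ref{conj:Jarden} directly from Theorem~\ref{thm:main} by exhibiting $K(A_{tor})$ as the fixed field of an intersection of kernels of $\ell$-adic Galois representations. Setting $n = 2\dim(A)$, I would associate to each prime $\ell$ the Tate module $T_\ell(A) = \varprojlim_k A[\ell^k]$, a free $\ZZ_\ell$-module of rank $n$ on which $\Gal(K)$ acts continuously; fixing a basis gives a representation
\[
\rho_\ell \colon \Gal(K) \to \GL_n(\ZZ_\ell) \subseteq \GL_n(\overline{\mathbb{Q}}_\ell),
\]
to which Theorem~\ref{thm:main} will be applied.

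The key identification to be checked is that the fixed field of $\bigcap_\ell \ker\rho_\ell$ equals $K(A_{tor})$. Since $A_{tor} = \bigoplus_\ell A[\ell^\infty]$, an element $\sigma \in \Gal(K)$ fixes $A_{tor}$ pointwise if and only if it acts trivially on $A[\ell^k]$ for every prime $\ell$ and every $k \geq 1$. The action on $A[\ell^k]$ is the reduction modulo $\ell^k$ of $\rho_\ell$, so $\sigma$ acts trivially on all $A[\ell^k]$ precisely when $\rho_\ell(\sigma) = 1$ for every $\ell$, i.e.\ when $\sigma$ lies in $\bigcap_\ell \ker\rho_\ell$. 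Hence any intermediate $L$ with $K \subseteq L \subseteq K(A_{tor})$ is a fortiori fixed by $\bigcap_\ell \ker\rho_\ell$, and invoking Theorem~\ref{thm:main} with the family $(\rho_\ell)_\ell$ will conclude that $L$ is Hilbertian.

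I do not anticipate any genuine obstacle at this stage, since all of the substantive input is packaged in Theorem~\ref{thm:main} (and, behind it, in Theorem~\ref{thm:criteria}). The only remaining task is the routine bookkeeping that matches the classical description of $K(A_{tor})$ via $\ell^k$-torsion points to the kernel-intersection hypothesis of Theorem~\ref{thm:main}, and the verification that the Tate module construction produces representations of the form required there; both are standard facts about abelian varieties.
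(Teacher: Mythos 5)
Your proof is correct and matches the paper's approach exactly: the paper derives Theorem~\ref{conj:Jarden} from Theorem~\ref{thm:main} by applying it to the family of Tate-module representations $\rho_\ell\colon\Gal(K)\to\GL_{2\dim(A)}(\ZZ_\ell)$ and using the standard identification of $K(A_{tor})$ with the fixed field of $\bigcap_\ell\ker\rho_\ell$. (The one thing both you and the paper pass over is that when $\operatorname{char}(K)=p>0$ the module $T_p(A)$ has $\ZZ_p$-rank strictly less than $2\dim(A)$, but this is harmless since one may pad the representation to land in $\GL_n$.)
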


This conjecture was proven by Jarden in \cite{Jarden} for number fields $K$, and in \cite{FehmJardenPetersen} by Jarden, Petersen, and the second author for function fields $K$. Thornhill is able to deduce the special case where $L$ is Galois over $K$, like above.

Jarden's proof in \cite{Jarden} uses Serre's open image theorem,
while Thornhill's proof in \cite{Thornhill} is based on a theorem of Larsen and Pink \cite{LarsenPink} on subgroups of $\GL_n$ over finite fields.
Another key ingredient in both proofs is Haran's diamond theorem. 
The fact that $L/K$ is Galois is crucial in Thornhill's approach, 
essentially since in this special case Theorem~\ref{thm:criteria} is a straightforward consequence of
Kuyk's theorem and the diamond theorem.
We show in Section~\ref{sec:prfGR} that Theorem~\ref{thm:main}, and hence Theorem~\ref{conj:Jarden}, follows rather simply from Theorem~\ref{thm:criteria} and the theorem of Larsen and Pink.
Our proof of Theorem~\ref{thm:criteria}, which takes up Sections \ref{sec:gds} and \ref{sec:criterion} of this paper, 
utilizes the twisted wreath product approach that Haran developed to prove his diamond theorem. 

Theorem~\ref{thm:criteria} has many other applications, some of which are presented in Section~\ref{sec:appl}.  
For example we show that if $L$ is the compositum of all degree $d$ extensions of $\mathbb{Q}$, for some fixed $d$, then every subfield of $L$ is Hilbertian (Theorem~\ref{thm:bd}). 
A similar application is given when $L$ is the compositum of the fixed fields of all $ \ker \bar\rho$, where $\bar\rho$ runs over all \emph{finite} representations of $\Gal(\mathbb{Q})$ of fixed dimension $d$ (Theorem~\ref{thm:allrep}).  
We also include one application to the theory of free profinite groups (Theorem~\ref{thm:free}).

\section{Groups of finite abelian-simple length}\label{sec:gds}

\subsection{Basic theory}
Let $G$ be a profinite group. We define the \textbf{generalized derived subgroup} $D(G)$ of $G$ as the intersection of all open normal subgroups $N$ of $G$ with $G/N$ either abelian or simple. 
The \textbf{generalized derived series} of $G$, 
\[
G=G^{(0)} \geq G^{(1)} \geq G^{(2)}\geq \cdots,
\]
is defined inductively by $G^{(0)}=G$ and $G^{(i+1)} = D(G^{(i)})$ for $i\geq0$.

\begin{lem}\label{lem:basic_prop_gds}
Let $G$ be a profinite group with generalized derived series $G^{(i)}$, $i=0,1,2,\ldots$. Then
$G^{(i)}\lhd G$ for each $i\geq 0$, and if $G^{(i)}\neq 1$, then $G^{(i+1)}\neq G^{(i)}$.
\end{lem}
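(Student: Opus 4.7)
The plan is to prove both claims by induction on $i$, treating normality first and then non-stabilization, since the first statement is used implicitly in the second (we need $G^{(i)}$ to be a profinite group in its own right to apply the definition of $D$).

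For the normality claim, I would induct on $i$. The base case $G^{(0)}=G$ is trivial. For the induction step, assume $H := G^{(i)} \lhd G$. The key observation is that for any $g \in G$, conjugation by $g$ restricts to a continuous automorphism $c_g\colon H \to H$, and such an automorphism sends an open normal subgroup $N \lhd_o H$ to another open normal subgroup $c_g(N) \lhd_o H$, while $H/N \cong H/c_g(N)$, so the property of the quotient being abelian or simple is preserved. Hence $c_g$ permutes the collection of subgroups $N$ appearing in the definition of $D(H)$, and therefore fixes their intersection, giving $gD(H)g^{-1}=D(H)$. This shows $G^{(i+1)} = D(G^{(i)}) \lhd G$.

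For the second claim, it suffices to show that any nontrivial profinite group $H$ satisfies $D(H) \neq H$, and then apply this to $H = G^{(i)}$. Since $H$ is a nontrivial profinite group, it is residually finite, so there exists a proper open normal subgroup $M \lhd_o H$; then $H/M$ is a nontrivial finite group. Pick a maximal proper normal subgroup $N/M$ of $H/M$ (which exists by finiteness). Then $N \lhd_o H$ is proper, and $H/N \cong (H/M)/(N/M)$ is a finite simple group. Thus $N$ is one of the subgroups in the definition of $D(H)$, whence $D(H) \subseteq N \subsetneq H$, as required.

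I do not anticipate any serious obstacle here: the arguments are essentially routine, the only mildly subtle point being the remark that $D$ is functorial under automorphisms of $H$, which drives the inductive step for normality. The one thing to be careful about is to avoid the temptation to argue for normality via the derived-series formula $G^{(i+1)} = [G^{(i)},G^{(i)}]$ or an analogue, since the quotients involved include nonabelian simple groups and no single algebraic expression describes $D(H)$.
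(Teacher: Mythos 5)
Your proof is correct and follows essentially the same route as the paper: for normality, your argument that conjugation by $g\in G$ permutes the defining family of open normal subgroups and hence fixes $D(G^{(i)})$ is precisely a spelled-out version of the paper's observation that $D(G^{(i)})$ is characteristic in $G^{(i)}$ combined with $G^{(i)}\lhd G$. For the second claim, your construction of a finite simple quotient via residual finiteness and a maximal normal subgroup of a finite quotient is exactly the (unstated) justification behind the paper's remark that a nontrivial $G^{(i)}$ has a nontrivial finite simple quotient.
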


\begin{proof}
Since $G^{(i)}$ is characteristic in $G^{(i-1)}$ and since by induction $G^{(i-1)}\lhd G$,  the first assertion follows.
If $G^{(i)}\neq 1$, then $G^{(i)}$ has a nontrivial finite simple quotient. Hence $G^{(i+1)} = D(G^{(i)})\neq G^{(i)}$, as claimed in the second assertion.
\end{proof}

We define the \textbf{abelian-simple length} of a profinite group $G$, denoted by $l(G)$, to be the smallest integer $l$ for which $G^{(l)}=1$. If $G^{(i)}\neq 1$ for all $i$, we set $l(G) =  \infty$.
We say that $G$ is \textbf{of finite abelian-simple length} if $l(G)<\infty$.

\begin{lem}\label{lem:log_length}
Let $G$ be a finite group. Then $l(G) \leq \log_2(|G|)<\infty$.
\end{lem}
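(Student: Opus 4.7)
The plan is to show by induction that each step of the generalized derived series at least halves the order, so the series must terminate after at most $\log_2|G|$ steps.

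First I would verify the key one-step estimate: for any nontrivial finite group $H$, we have $[H:D(H)]\geq 2$. This uses a standard fact: a nontrivial finite group $H$ admits a maximal proper normal subgroup $N$, and then $H/N$ is (possibly abelian) simple. Since $N$ is one of the subgroups being intersected in the definition of $D(H)$, we get $D(H)\subseteq N\subsetneq H$, so $|D(H)|\leq |H|/2$. This is the main (though small) observation; Lemma~\ref{lem:basic_prop_gds} only gave the strict inequality $G^{(i+1)}\neq G^{(i)}$, and the point here is to upgrade ``strictly smaller'' to ``at most half as large.''

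Next I would apply this to $H=G^{(i)}$ for $i<l(G)$. By definition of $l(G)$, for every $i$ with $0\leq i<l(G)$ the group $G^{(i)}$ is nontrivial, so by the preceding step $|G^{(i+1)}|\leq |G^{(i)}|/2$. A straightforward induction then gives
\[
|G^{(i)}|\leq |G|/2^{i} \quad \text{for all } i=0,1,\ldots,l(G).
\]

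Finally, set $l=l(G)$ and suppose $l$ is finite (otherwise there is nothing to show, but in fact for $G$ finite, since $G$ has only finitely many subgroups and the chain is strictly decreasing by Lemma~\ref{lem:basic_prop_gds}, $l$ is finite automatically). If $l\geq 1$ then $G^{(l-1)}\neq 1$, so $2\leq |G^{(l-1)}|\leq |G|/2^{\,l-1}$, which rearranges to $2^{l}\leq |G|$, i.e.\ $l\leq \log_2|G|$. The case $l=0$ (meaning $G=1$) is trivial. There is no serious obstacle; the only subtlety is remembering to invoke the existence of a simple quotient to ensure the halving at each step rather than merely strict descent.
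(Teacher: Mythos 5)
Your proof is correct and follows essentially the same route as the paper: show each step of the generalized derived series cuts the order by at least a factor of 2, then conclude $2^{l(G)}\leq|G|$. One small remark: your re-derivation of $[G^{(i)}:G^{(i+1)}]\geq 2$ via a maximal normal subgroup is unnecessary, since for a \emph{finite} group the strict inclusion $G^{(i+1)}\subsetneq G^{(i)}$ from Lemma~\ref{lem:basic_prop_gds} already forces index at least $2$ by Lagrange; the paper uses exactly this shortcut.
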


\begin{proof}
By Lemma~\ref{lem:basic_prop_gds}, if $G^{(i)}\neq 1$, then $[G^{(i)}:G^{(i+1)}] \geq 2$. Hence $|G| = \prod_{i=1}^{l} [G^{(i-1)} :G^{(i)}] \geq 2^{l}$, where $l=l(G)$.
\end{proof}

\begin{ex}
If $G$ is pro-solvable, then the generalized derived series coincides with the derived series of $G$. 
In particular, such $G$ is solvable if and only if $G$ is of finite abelian-simple length.
\end{ex}

We will need the following well-known result.

\begin{lem}\label{lem:directproductsimplegroups}
Let $G=A\times\prod_{i\in I} S_i$ be a profinite group, where $A$ is abelian and each $S_i$ is a non-abelian finite simple group. 
If $N$ is a closed normal subgroup of $G$, then
$N=(N\cap A)\times\prod_{i\in J} S_i$ for some subset $J\subseteq I$. 
In particular, $G/N\cong (AN/N)\times\prod_{i\in I\smallsetminus J} S_i$.
\end{lem}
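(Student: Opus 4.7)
The plan is to analyze the intersections $T_i := N \cap S_i$, where I identify each $S_i$ with its coordinate subgroup in $G$. Since $T_i$ is normal in the simple group $S_i$, it equals either $1$ or $S_i$, so I would set $J := \{i \in I : S_i \subseteq N\}$ and aim to prove $N = (N \cap A) \times \prod_{i \in J} S_i$.

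The key technical step is to upgrade a nontrivial projection of $N$ to $S_i$ into actual inclusion of $S_i$ in $N$: I would show that if $\pi_i(N) \neq 1$ for the $i$-th coordinate projection $\pi_i \colon G \to S_i$, then already $S_i \subseteq N$. Pick some $g \in N$ with nontrivial $i$-th component $s_i$, and for arbitrary $h \in S_i$ form the commutator $[h,g]$. Since $S_i$ commutes with every other factor of the product, $[h,g]$ reduces to $[h,s_i]$ inside $S_i$, and normality of $N$ places it in $T_i$. Hence every $S_i$-conjugate of $s_i$ lies in $s_i T_i$, so $\langle s_i \rangle T_i$ is a normal subgroup of $S_i$ containing $s_i$. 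Simplicity forces $\langle s_i \rangle T_i = S_i$, and if $T_i = 1$ this would make $S_i$ cyclic, contradicting the non-abelian hypothesis. Therefore $T_i = S_i$.

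With this in hand the decomposition is essentially bookkeeping. The closed normal subgroup $N$ contains each $S_i$ with $i \in J$, hence also the closure of the subgroup they generate, namely $\prod_{i \in J} S_i$. On the other hand, any $g = (a,(s_i)_{i \in I}) \in N$ automatically has $s_i = 1$ for $i \notin J$, and multiplying by the inverse of $(1,(s_i)_{i \in J}) \in \prod_{i \in J} S_i \subseteq N$ produces an element of $N \cap A$. Putting this together gives $N = (N \cap A) \times \prod_{i \in J} S_i$. The ``in particular'' clause follows immediately from the projection $G \to (A/(N \cap A)) \times \prod_{i \in I \setminus J} S_i$, which has kernel $N$, combined with the standard isomorphism $AN/N \cong A/(A \cap N)$.

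The main obstacle is the commutator manipulation in the second paragraph, where the non-abelianness of the simple factors is crucial; the statement genuinely fails if some $S_i$ is allowed to be abelian, which is precisely why the lemma folds all abelian contributions into $A$.
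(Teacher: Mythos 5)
Your proof is correct, and it follows essentially the same route that the paper outsources to \cite[Lemma 25.5.3]{FriedJarden} (the paper gives no argument of its own beyond the citation): show that for each $i$ the subgroup $N\cap S_i$ is all of $S_i$ whenever the $i$-th projection of $N$ is nontrivial, collect these indices into $J$, and then decompose an arbitrary element of $N$ by peeling off its $\prod_{i\in J}S_i$-part, using closedness of $N$ to ensure $\prod_{i\in J}S_i\subseteq N$. One small remark: the commutator step can be streamlined. Since $N\lhd G$, the projection $\pi_i(N)$ is a normal subgroup of $S_i$, hence equals $1$ or $S_i$; and for $g\in N$, $h\in S_i$ one has $[g,h]=[\pi_i(g),h]\in N\cap S_i$, so $[\pi_i(N),S_i]\leq N\cap S_i$. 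If $\pi_i(N)=S_i$, then $S_i=[S_i,S_i]\leq N$ because a non-abelian finite simple group is perfect. This reaches the same conclusion without the auxiliary subgroup $\langle s_i\rangle T_i$ and the appeal to non-cyclicity, but your version is equally valid.
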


\begin{proof}
The proof of \cite[Lemma 25.5.3]{FriedJarden} goes through almost literally.
\end{proof}

\label{def:D0}
For a profinite group $G$ let us denote by $D_0(G)$ the intersection of all maximal open normal subgroups $N$ of $G$ such that
$G/N$ is not abelian. Then $D(G)=D_0(G)\cap G'$, where $G'$ is the commutator subgroup of $G$.

\begin{lem}\label{lem:basic_prop_sigma-derived}
Let $G$ be a profinite group. Then $D(G)$ is the smallest closed normal subgroup of $G$ such that $G/D(G)$
is isomorphic to a direct product of finite simple groups and abelian groups.
\end{lem}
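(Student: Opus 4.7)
The plan is to prove both that $G/D(G)$ has the stated form and that it is the smallest closed normal subgroup with this property, using throughout the identity $D(G) = G' \cap D_0(G)$ noted just before the lemma.

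For the existence direction the key sub-step is to describe $G/D_0(G)$. Let $\mathcal{M}$ denote the collection of open normal subgroups $M$ of $G$ with $G/M$ a non-abelian finite simple group; every such $M$ is maximal normal. I would show that the natural injection $G/D_0(G) \hookrightarrow \prod_{M \in \mathcal{M}} G/M$ is surjective. For any two distinct $M_1, M_2 \in \mathcal{M}$, maximality of $M_1$ forces $M_1 M_2 = G$, hence $G/(M_1 \cap M_2) \cong G/M_1 \times G/M_2$. I would then induct on finite subsets: if $N = \bigcap_{i<k} M_i$ and $M_k \in \mathcal{M}$ is distinct from $M_1, \ldots, M_{k-1}$, then $M_k \not\supseteq N$, for otherwise $G/M_k$ would be a simple quotient of $G/N \cong \prod_{i<k} G/M_i$ and thus coincide with some $G/M_{i_0}$, forcing $M_k = M_{i_0}$. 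Hence $M_k N = G$ by maximality, and the finite surjection $G \twoheadrightarrow \prod_{i=1}^k G/M_i$ follows. Density of the image combined with compactness of $G/D_0(G)$ then yields surjectivity onto the full product.

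Once $G/D_0(G)$ is identified as a direct product of non-abelian finite simple groups, it is perfect, so $G' D_0(G) = G$. The standard fact that $H_1 H_2 = G$ for normal $H_1, H_2$ implies $G/(H_1 \cap H_2) \cong G/H_1 \times G/H_2$ then gives
\[
G/D(G) = G/(G' \cap D_0(G)) \cong G/G' \times G/D_0(G),
\]
a profinite abelian group times a direct product of finite simple groups, which is of the required form.

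For the minimality, suppose $N \lhd G$ is closed and $G/N$ is a direct product of finite simple groups and abelian groups. Grouping the abelian simple factors into the abelian part, I may write $Q := G/N \cong \tilde A \times \prod_{i \in I} S_i$ with $\tilde A$ abelian and each $S_i$ non-abelian finite simple. I would then use that $D$ is functorial under quotients: for $\pi\colon G \twoheadrightarrow Q$, every open normal $M' \lhd Q$ with $Q/M'$ abelian or simple pulls back to an open normal $\pi^{-1}(M') \lhd G$ with the same quotient, so $\pi(D(G)) \subseteq D(Q)$. It therefore suffices to prove $D(Q)=1$. Applying Lemma~\ref{lem:directproductsimplegroups} to $Q$: every open normal of $Q$ with abelian quotient contains $\prod_i S_i$ and these intersect in $\prod_i S_i$, while every open normal with non-abelian simple quotient has the form $\tilde A \times \prod_{i \neq i_0} S_i$ and these intersect in $\tilde A$. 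Hence $D(Q) \subseteq \tilde A \cap \prod_i S_i = 1$, giving $D(G) \subseteq N$. The main obstacle is the sub-step establishing the full isomorphism $G/D_0(G) \cong \prod_M G/M$; everything else then follows from elementary group theory and Lemma~\ref{lem:directproductsimplegroups}.
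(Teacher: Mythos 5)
Your proof is correct, and the overall structure (identifying $G/D(G) \cong (G/G')\times(G/D_0(G))$ for the existence direction, and showing any suitable $N$ contains $D(G)$ for minimality) matches the paper's. Two differences are worth noting. First, for the identification of $G/D_0(G)$ as a product of non-abelian finite simple groups, you reprove the statement from scratch via a Chinese-remainder-style density argument, whereas the paper simply cites \cite[Lemma 18.3.11]{FriedJarden}. Your route is more self-contained and, as a bonus, gives $D_0(G)G'=G$ immediately from perfectness of $G/D_0(G)$, instead of deducing it (as the paper does) from the second clause of the cited lemma about maximal closed normal subgroups containing $D_0(G)$. Second, for minimality the paper argues directly: writing $Q=\prod_{i\in I}S_i$ with projections $\pi_j$, it observes that $N=\bigcap_j\ker(\pi_j\circ\pi)$ and that each $\ker(\pi_j\circ\pi)$ has quotient $S_j$ (abelian or simple), so $N\geq D(G)$ falls straight out of the definition, without invoking Lemma~\ref{lem:directproductsimplegroups}. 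You instead pass to $Q=G/N$ by the functoriality observation $\pi(D(G))\subseteq D(Q)$ and compute $D(Q)=1$ using Lemma~\ref{lem:directproductsimplegroups}. Both are valid; the paper's version is shorter, while yours makes the useful functoriality of $D$ explicit, anticipating Lemma~\ref{lem:basicpropertiesofmSigma}.\ref{cond1:basicpropertiesofmSigma}. One small point to tighten in your CRT induction: to rule out $M_k\supseteq N$ it is not enough that $G/M_k$ is abstractly isomorphic to some $G/M_{i_0}$; you should note that the non-abelian simple quotient $G/N\twoheadrightarrow G/M_k$ must be a coordinate projection (e.g.\ by Lemma~\ref{lem:directproductsimplegroups} applied to the finite product), which then forces $M_k=M_{i_0}$ as subgroups of $G$, not merely up to isomorphism.
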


\begin{proof}
By \cite[Lemma 18.3.11]{FriedJarden}, $G/D_0(G)$ is a product of finite non-abelian simple groups,
and if $N$ is a maximal closed normal subgroup of $G$ containing $D_0(G)$, then $G/N$ is non-abelian.
It follows that $D_0(G)G'=G$, so $G/D(G)=(G/G')\times(G/D_0(G))$ is of the asserted form.

Conversely, assume that $N$ is a closed normal subgroup of $G$ with $G/N=\prod_{i\in I}S_i$, with $S_i$ either abelian or finite simple.
Let $\pi_j:\prod_{i\in I}S_i\rightarrow S_j$ be the projection map and $\pi:G\rightarrow G/N$ the quotient map.
Then $N=\bigcap_{j\in I}\ker(\pi_j\circ\pi)\geq D(G)$, since $G/\ker(\pi_j\circ\pi)\cong S_j$.
\end{proof}

\begin{lem}\label{lem:subnormal}
Let $G$ be a profinite group. Then $G$ is of finite abelian-simple length if and only if there exists a subnormal series
$$
 G=G_0\rhd G_1\rhd\dots\rhd G_r=1
$$ 
of closed subgroups of $G$
with all factors $G_{i-1}/G_{i}$ either abelian or a product of finite simple groups.
\end{lem}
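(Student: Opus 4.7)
The plan is to prove both directions, with the forward direction handled by the generalized derived series (after a small refinement) and the reverse direction by induction on the length of the subnormal series.

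For the forward direction, suppose $l(G)=l<\infty$. By Lemma~\ref{lem:basic_prop_gds} the generalized derived series
\[
 G=G^{(0)}\rhd G^{(1)}\rhd\dots\rhd G^{(l)}=1
\]
consists of closed normal subgroups of $G$, hence is subnormal. By Lemma~\ref{lem:basic_prop_sigma-derived} each factor $G^{(i)}/G^{(i+1)}$ is isomorphic to $A_i\times\prod_{j}S_{i,j}$ with $A_i$ abelian and the $S_{i,j}$ non-abelian finite simple. This factor need not itself be of the desired shape, so I will refine the series: write $\pi_i\colon G^{(i)}\to G^{(i)}/G^{(i+1)}$ and set $H_i=\pi_i^{-1}(A_i\times 1)$. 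Since $A_i\times 1$ is a direct factor it is normal in $G^{(i)}/G^{(i+1)}$, so $H_i\lhd G^{(i)}$ with $H_i/G^{(i+1)}\cong A_i$ abelian and $G^{(i)}/H_i\cong\prod_j S_{i,j}$ a product of finite simple groups. Inserting each $H_i$ between $G^{(i)}$ and $G^{(i+1)}$ yields the required subnormal series.

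For the reverse direction, suppose $G=G_0\rhd G_1\rhd\dots\rhd G_r=1$ with each factor either abelian or a product of finite simple groups. I will argue by induction on $r$, with the base case $r=0$ trivial. The key tool is monotonicity of $D$: if $H\leq K$ are closed subgroups of a profinite group, then $D(H)\leq D(K)$. To see this, let $N\lhd_o K$ with $K/N$ abelian or simple; then $H/(H\cap N)$ embeds into $K/N$ and is therefore abelian, trivial, or isomorphic to the same simple group, so in every case $D(H)\leq H\cap N\leq N$, and intersecting over all such $N$ gives $D(H)\leq D(K)$.

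For the inductive step, since $G_0/G_1$ is either abelian or a product of finite simple groups, Lemma~\ref{lem:basic_prop_sigma-derived} gives $G^{(1)}=D(G)\leq G_1$. Iterating monotonicity of $D$ yields $G^{(i+1)}\leq G_1^{(i)}$ for every $i\geq 0$. Applying the induction hypothesis to $G_1$ (which inherits the shorter subnormal series $G_1\rhd\dots\rhd G_r=1$) gives $l(G_1)\leq r-1$, and hence $G^{(r)}\leq G_1^{(r-1)}=1$, so $l(G)\leq r<\infty$. The only delicate point is the monotonicity of $D$, which is not formally recorded in the preceding material; everything else is a straightforward assembly of Lemmas~\ref{lem:basic_prop_gds} and~\ref{lem:basic_prop_sigma-derived}.
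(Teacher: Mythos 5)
Your forward direction is fine and simply makes explicit the refinement that the paper waves at (``can easily be refined''): splitting each factor $G^{(i)}/G^{(i+1)}\cong A_i\times\prod_j S_{i,j}$ by inserting the preimage of $A_i\times 1$.

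The reverse direction, however, contains a genuine error. The ``monotonicity of $D$'' claim --- that $D(H)\leq D(K)$ for \emph{arbitrary} closed subgroups $H\leq K$ --- is false, and the proof sketch breaks exactly where you wave your hands: a subgroup of a simple group need not be trivial, abelian, or the whole group. Concretely, take $K=A_5$ and $H=A_4$. Since $A_5$ is simple, $D(A_5)=1$; but $A_4$ has no simple nonabelian quotient and $A_4'=V_4$, so $D(A_4)=V_4\neq 1$, giving $D(H)\not\leq D(K)$. Fortunately, in the place where you actually apply the claim, you have much more: $G^{(1)}=D(G)\leq G_1$ and $G^{(1)}\lhd G$, hence $G^{(1)}\lhd G_1$, so what you need is monotonicity of the generalized derived series along a \emph{normal} inclusion, which is precisely Lemma~\ref{lem:basicpropertiesofmSigma}.\ref{cond2:basicpropertiesofmSigma} (``$N^{(i)}\leq G^{(i)}$ for $N\lhd G$''). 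Applying that lemma with $N=G^{(1)}$ inside $G_1$ gives $G^{(i+1)}=(G^{(1)})^{(i)}\leq G_1^{(i)}$ for all $i$, and then your induction $l(G_1)\leq r-1$ finishes the argument as you wrote it. So the structure of your proof survives; replace the false general monotonicity by the already-recorded normal-subgroup version. For comparison, the paper instead intersects the given series with $D(G)$ to produce a length-$(r-1)$ series for $D(G)$ and inducts on $r$ that way; the two routes are comparably short, but yours requires noticing the normality of $G^{(1)}$ in $G_1$ whereas the paper's requires checking that the intersected factors still have the right shape.
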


\begin{proof}
If $G_i$, $i=0,\dots,r$ is a subnormal series of $G$ of length $r$ of the asserted form, then $D(G)\leq G_1$ by Lemma \ref{lem:basic_prop_sigma-derived}.
Thus, $G_i\cap D(G)$, $i=1,\dots,r$ is a subnormal series of $D(G)$ of length $r-1$, so induction on $r$ shows that $l(D(G))<\infty$, and hence $l(G)<\infty$.

Conversely, if $l(G)<\infty$, then, by Lemma \ref{lem:basic_prop_sigma-derived}, $G^{(i)}$, $i=0,\dots,l(G)$ can easily be refined to a subnormal series of the asserted form.
\end{proof}

\begin{lem}\label{lem:basicpropertiesofmSigma}
\begin{enumerate}
\item \label{cond1:basicpropertiesofmSigma} If $\alpha\colon G \to H$ is an epimorphism of profinite groups, then $\alpha(G^{(i)})$, $i=0,1,2\ldots$ is the generalized derived series of $H$.  In particular, $l(H)\leq l(G)$.
\item \label{cond2:basicpropertiesofmSigma} If $N$ is a closed normal subgroup of a profinite group $G$, then $N^{(i)}\leq G^{(i)}$ for each $i$. In particular, $l(N) \leq l(G)$.
\item \label{cond3:basicpropertiesofmSigma} 
If $\mathcal{N}$ is a downward directed family of closed normal subgroups of $G$ with $\bigcap_{N\in\mathcal{N}}N=1$, 
then $G^{(i)}=\varprojlim_{N\in\mathcal{N}} (G/N)^{(i)}$ for each $i$. 
In particular, $l(G)=\sup_{N\in\mathcal{N}} l(G/N)$.
\item \label{cond4:basicpropertiesofmSigma}
Let $\alpha\colon G\to K$  and $\beta\colon H\to K$ be epimorphisms of profinite groups,
and let $G\times_KH$ be the corresponding fiber product. Then $(G\times_{K} H)^{(i)}\leq G^{(i)}\times_{K^{(i)}} H^{(i)}$ for each $i$. In particular, $l(G\times_{K} H) \leq \max(l(G),l(H))$.
\end{enumerate}
\end{lem}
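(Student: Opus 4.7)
The plan is to prove the four parts in order, using (1) as the workhorse. For (1), I would induct on $i$, which reduces the claim to showing $\alpha(D(G)) = D(H)$ for an epimorphism $\alpha \colon G \twoheadrightarrow H$. Both inclusions follow from the characterization of $D$ in Lemma~\ref{lem:basic_prop_sigma-derived}: since $G/D(G)$ is a product of abelian and finite simple groups, so is its quotient $H/\alpha(D(G))$ (using Lemma~\ref{lem:directproductsimplegroups}, which easily implies that such products are closed under forming normal quotients), giving $D(H) \leq \alpha(D(G))$; conversely, $G/\alpha^{-1}(D(H)) \cong H/D(H)$ again has that form, forcing $D(G) \leq \alpha^{-1}(D(H))$.

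For (2), I would also induct on $i$. Since $D$ is a characteristic operation---the collection of open normal subgroups with abelian or simple quotient is preserved by automorphisms---each $N^{(i)}$ is characteristic in $N$, hence normal in $G$ and, by the induction hypothesis $N^{(i)} \leq G^{(i)}$, normal in $G^{(i)}$. The image of $N^{(i)}$ in $G^{(i)}/D(G^{(i)})$ is therefore a closed normal subgroup of a product of abelian and finite simple groups, so $N^{(i)}/(N^{(i)}\cap D(G^{(i)}))$ has the same form by Lemma~\ref{lem:directproductsimplegroups}, and Lemma~\ref{lem:basic_prop_sigma-derived} then yields $N^{(i+1)} = D(N^{(i)}) \leq D(G^{(i)}) = G^{(i+1)}$.

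For (3), I would again induct on $i$; the substance is the case $i=1$, where I need $D(G) = \bigcap_{N \in \mathcal{N}} \pi_N^{-1}(D(G/N))$, with $\pi_N \colon G \to G/N$ the quotient. The inclusion ``$\subseteq$'' follows from (1). The reverse relies on the observation that, since $\mathcal{N}$ is downward directed with $\bigcap \mathcal{N} = 1$ and $G/M$ is finite, any open normal $M \lhd G$ with $G/M$ abelian or simple contains some $N \in \mathcal{N}$; hence $M \geq \pi_N^{-1}(D(G/N))$, and intersecting over all such $M$ gives the claim. The inductive step applies this base case to $G^{(i)}$ with the downward directed family $\{G^{(i)} \cap N\}_{N \in \mathcal{N}}$, using $G^{(i)}/(G^{(i)} \cap N) \cong (G/N)^{(i)}$ from (1). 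The length identity then follows since an inverse limit of profinite groups is trivial iff each factor is.

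Finally, (4) is immediate from (1): the projections $\pi_G, \pi_H \colon G \times_K H \to G, H$ are epimorphisms, so $(G \times_K H)^{(i)} \leq G^{(i)} \times_K H^{(i)}$; since $\alpha(G^{(i)}) = K^{(i)} = \beta(H^{(i)})$, for any $(g,h)$ in this intersection the common value $\alpha(g) = \beta(h)$ automatically lies in $K^{(i)}$, so the intersection equals $G^{(i)} \times_{K^{(i)}} H^{(i)}$. The main obstacle will be (3), specifically the cofinality argument that lets one pass from an intersection over arbitrary open normal subgroups of $G$ to those containing a given $N \in \mathcal{N}$.
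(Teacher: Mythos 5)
Your proposal is correct, and for parts (1)--(3) it follows essentially the same structure as the paper's proof, though you give somewhat more self-contained arguments where the paper cites external facts about Melnikov subgroups. In (1) the paper obtains $\alpha(D(G)) \leq D(H)$ by quoting $\alpha(G')=H'$ and $\alpha(M(G))=M(H)$ from Fried--Jarden, where $M(\cdot)$ is the Melnikov subgroup, and then uses Lemmas~\ref{lem:basic_prop_sigma-derived} and \ref{lem:directproductsimplegroups} for the reverse inclusion; your version deduces both inclusions directly from the minimality characterization of $D$, which is a minor but pleasant economy. In (2) the paper similarly quotes $M(N)\leq M(G)$ from Fried--Jarden, while you go through Lemma~\ref{lem:directproductsimplegroups} applied to the image of $N^{(i)}$ in $G^{(i)}/D(G^{(i)})$; both work. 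In (3) the paper simply observes $\pi_N(G^{(i)})=(G/N)^{(i)}$ and invokes a general inverse-limit fact from Ribes--Zalesskii; your cofinality argument (every open normal $M$ with abelian or simple quotient contains some $N\in\mathcal{N}$ by compactness) unpacks that citation explicitly and is correct, including the care you took with the inductive step over $\{G^{(i)}\cap N\}$.

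The genuine difference is in (4). The paper introduces $M=1\times\ker\beta$, $N=\ker\alpha\times 1$, restricts the defining intersection $D(L)=\bigcap_{U\in\mathcal{U}}U$ to those $U$ containing $M$ (resp.\ $N$), identifies the result as $D(G)\times_{D(K)}D(H)$, and then concludes ``by induction'' --- a step that, on close reading, requires knowing $D(L)$ is normal in the fiber product $D(G)\times_{D(K)}D(H)$ and then invoking part (2), which makes the induction less transparent than it first appears. Your route avoids all of this: the projections $\pi_G,\pi_H$ of the fiber product are epimorphisms, so part (1) gives $\pi_G(L^{(i)})=G^{(i)}$ and $\pi_H(L^{(i)})=H^{(i)}$ directly, whence any $(g,h)\in L^{(i)}$ has $g\in G^{(i)}$, $h\in H^{(i)}$, and $\alpha(g)=\beta(h)\in K^{(i)}$ automatically. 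This proves the containment for all $i$ at once with no induction and no auxiliary fiber-product lemma, and is, in my view, cleaner than what appears in the paper.
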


\begin{proof}
Recall that the Melnikov subgroup $M(G)$ of a profinite group $G$ is defined as the intersection of all maximal open normal subgroups of $G$. Thus $D(\Gamma) = M(\Gamma)\cap \Gamma'$, for any profinite group $\Gamma$.

We start with Assertion~\ref{cond1:basicpropertiesofmSigma}. Since $\alpha$ is surjective, it follows that $\alpha(G') = H'$ and $\alpha(M(G)) = M(H)$ \cite[Lemma 25.5.4]{FriedJarden}. Thus $\alpha(D(G)) \subseteq D(H)$.
Since $H/\alpha(D(G))$ is a quotient of $G/D(G)$, Lemma \ref{lem:basic_prop_sigma-derived} and Lemma \ref{lem:directproductsimplegroups} 
together give that $\alpha(D(G))\supseteq D(H)$. 
So, $\alpha(D(G))=D(H)$. By induction we are done.

To get Assertion~\ref{cond2:basicpropertiesofmSigma} note that $N'\leq G'$ and that $M(N)\leq M(G)$ by \cite[Lemma 25.5.1]{FriedJarden}. Therefore $D(N)\leq D(G)$. Since $D(N)$ is characteristic in $N$, we get that $D(N)\lhd D(G)$. Therefore, by induction, $N^{(i)}\leq G^{(i)}$ for every $i$.

Let us prove Assertion~\ref{cond3:basicpropertiesofmSigma}. 
Since $\mathcal{N}$ is directed and $\bigcap_{N\in\mathcal{N}}N=1$, $G=\varprojlim_{N\in\mathcal{N}} G/N$.
Assertion~\ref{cond1:basicpropertiesofmSigma} implies that
if $N\in\mathcal{N}$ and $\pi_N:G\rightarrow G/N$ is the quotient map, $\pi_N(G^{(i)})=(G/N)^{(i)}$.
Thus, $G^{(i)}=\varprojlim_{N\in\mathcal{N}} (G/N)^{(i)}$, cf.~\cite[Corollary 1.1.8a]{RibesZalesskii}.

Finally we get to Assertion~\ref{cond4:basicpropertiesofmSigma}.
Set $L=G\times_K H = \{(g,h) \colon \alpha(g)=\beta(h)\} \leq G\times H$. Let $M=1 \times \ker \beta$ and $N = \ker\alpha \times 1$. Then 
$NM= \ker\alpha\times\ker\beta\cong N\times M$, $L/M = G$, and $L/N=H$.
Thus,
if $\mathcal{U}$ denotes the set of closed normal subgroups $U$ of $L$ with $L/U$ either abelian or finite simple, then
\begin{eqnarray*}
D(L) \;=\; \bigcap_{U\in\mathcal{U}} U \;\leq\; \big(\bigcap_{\substack{U\in\mathcal{U}\\M\leq U}}U \big)\cap \big( \bigcap_{\substack{U\in\mathcal{U}\\N\leq U}}U\big) \;=\; D(G) \times_{D(K)}D(H).
\end{eqnarray*}
The last equality holds since $\alpha(D(G))=\beta(D(H))=D(K)$ by Assertion~1.
By induction, 
$L^{(i)}\leq G^{(i)}\times_{K^{(i)}} H^{(i)}$.
\end{proof}

\begin{prop}\label{prop:lengthN}
If $\mathcal{N}$ is a family of closed normal subgroups of $G$ with $\bigcap_{N\in\mathcal{N}}N=1$,
then $l(G)=\sup_{N\in\mathcal{N}}l(G/N)$.
\end{prop}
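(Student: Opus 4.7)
The inequality $\sup_{N\in\mathcal N}l(G/N)\leq l(G)$ is immediate from Lemma~\ref{lem:basicpropertiesofmSigma}(\ref{cond1:basicpropertiesofmSigma}) since each $G/N$ is a quotient of $G$, so the real content is the reverse inequality. My plan is to reduce to the directed situation already handled in Lemma~\ref{lem:basicpropertiesofmSigma}(\ref{cond3:basicpropertiesofmSigma}) by closing $\mathcal N$ under finite intersections. Concretely, I would let $\tilde{\mathcal N}$ be the family of all closed normal subgroups of the form $N_1\cap\dots\cap N_k$ with $N_i\in\mathcal N$ and $k\geq 1$. Then $\tilde{\mathcal N}$ is downward directed and $\bigcap_{M\in\tilde{\mathcal N}} M\subseteq\bigcap_{N\in\mathcal N} N=1$, so Lemma~\ref{lem:basicpropertiesofmSigma}(\ref{cond3:basicpropertiesofmSigma}) yields $l(G)=\sup_{M\in\tilde{\mathcal N}}l(G/M)$. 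It therefore suffices to prove $l(G/M)\leq\sup_{N\in\mathcal N}l(G/N)$ for every $M=N_1\cap\dots\cap N_k\in\tilde{\mathcal N}$.

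I would prove this by induction on $k$, the case $k=1$ being trivial. For $k=2$, the diagonal map $G\to G/N_1\times G/N_2$, $g\mapsto(gN_1,gN_2)$, has kernel $N_1\cap N_2$ and identifies $G/(N_1\cap N_2)$ with the fiber product $G/N_1\times_{G/N_1N_2} G/N_2$; here $N_1N_2$ is a closed normal subgroup of $G$ (normality is automatic, and closedness holds because $N_1N_2$ is the continuous image of the compact set $N_1\times N_2$ under multiplication). Lemma~\ref{lem:basicpropertiesofmSigma}(\ref{cond4:basicpropertiesofmSigma}) then gives $l(G/(N_1\cap N_2))\leq\max(l(G/N_1),l(G/N_2))$, which settles $k=2$. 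The inductive step applies the $k=2$ case to the pair $N_1\cap\dots\cap N_{k-1}$ and $N_k$ and invokes the induction hypothesis for $G/(N_1\cap\dots\cap N_{k-1})$.

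I do not foresee any serious obstacle: the argument is purely formal once one notices the fiber product description of $G/(N_1\cap N_2)$, and Lemma~\ref{lem:basicpropertiesofmSigma} provides all the machinery that is needed. The only step requiring a moment's care is the closedness of the product of two closed normal subgroups, which is however routine in the profinite setting.
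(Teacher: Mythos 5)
Your proof is correct and follows essentially the same route as the paper: close $\mathcal{N}$ under finite intersections to get a directed family, apply Lemma~\ref{lem:basicpropertiesofmSigma}(\ref{cond3:basicpropertiesofmSigma}), and then use the fiber product identification $G/(N_1\cap N_2)\cong G/N_1\times_{G/N_1N_2}G/N_2$ together with Lemma~\ref{lem:basicpropertiesofmSigma}(\ref{cond4:basicpropertiesofmSigma}) to handle finite intersections; you merely spell out the induction on $k$ and the closedness of $N_1N_2$ that the paper leaves implicit.
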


\begin{proof}
Let $\mathcal{N}'$ be the family of finite intersections of elements of $\mathcal{N}$.
By Lemma \ref{lem:basicpropertiesofmSigma}.3, $l(G)=\sup_{N\in\mathcal{N}'}l(G/N)$.
If $N_1,N_2\in\mathcal{N}$, then $G/(N_1\cap N_2)\cong (G/N_1)\times_{G/(N_1N_2)}(G/N_2)$,
so $\sup_{N\in\mathcal{N}'}l(G/N)=\sup_{N\in\mathcal{N}}l(G/N)$ by Lemma~\ref{lem:basicpropertiesofmSigma}.4.
\end{proof}

We already saw that the class of profinite groups of finite abelian-simple length is closed under taking
quotients and normal subgroups. In fact, it is also closed under forming group extensions:

\begin{prop}\label{prop:lengthextension}
Let $N$ be a closed normal subgroup of $G$. Then $l(G)\leq l(N)+l(G/N)$.
\end{prop}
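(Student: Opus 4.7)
The plan is to set $a = l(N)$ and $b = l(G/N)$, assume both are finite (else the inequality is vacuous), and show that $G^{(a+b)}=1$ by combining Lemma~\ref{lem:basicpropertiesofmSigma}.1 with Lemma~\ref{lem:basicpropertiesofmSigma}.2.

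First I would observe the iteration identity $G^{(b+i)}=(G^{(b)})^{(i)}$ for all $i \geq 0$, which follows by a trivial induction from the recursive definition $\Gamma^{(i+1)}=D(\Gamma^{(i)})$. Next, let $\pi\colon G\to G/N$ be the quotient map. By Lemma~\ref{lem:basicpropertiesofmSigma}.1 applied to $\pi$, we have $\pi(G^{(b)})=(G/N)^{(b)}=1$, hence
\[
G^{(b)}\leq N.
\]

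Now $G^{(b)}$ is a characteristic, and in particular normal, closed subgroup of $G$ (Lemma~\ref{lem:basic_prop_gds}), and since it is contained in $N$ it is also a closed normal subgroup of $N$. Applying Lemma~\ref{lem:basicpropertiesofmSigma}.2 to the pair $G^{(b)}\lhd N$ gives $(G^{(b)})^{(i)}\leq N^{(i)}$ for every $i\geq 0$. Taking $i=a$ and using $N^{(a)}=1$ yields $(G^{(b)})^{(a)}=1$, which by the identity from the first step means $G^{(b+a)}=1$. Thus $l(G)\leq a+b=l(N)+l(G/N)$.

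The argument is essentially routine once the two lemmas are in place; the only subtlety worth double-checking is the iteration identity $G^{(b+i)}=(G^{(b)})^{(i)}$, which holds because the generalized derived series is defined by repeated application of the operator $D(\cdot)$ to the previous term, so starting from $G^{(b)}$ and iterating $i$ times produces exactly $G^{(b+i)}$. No combinatorial obstacle is expected.
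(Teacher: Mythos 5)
Your proof is correct and follows essentially the same route as the paper's: apply Lemma~\ref{lem:basicpropertiesofmSigma}.1 to the quotient map to get $G^{(l(G/N))}\leq N$, then apply Lemma~\ref{lem:basicpropertiesofmSigma}.2 to this normal subgroup of $N$ and use the iteration identity $G^{(b+i)}=(G^{(b)})^{(i)}$. The only cosmetic difference is that you state the iteration identity explicitly, whereas the paper uses it implicitly.
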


\begin{proof}
Let $m=l(G/N)$, $n=l(N)$, and let $\pi:G\rightarrow G/N$ be the quotient map.
By Lemma \ref{lem:basicpropertiesofmSigma}.1, $\pi(G^{(m)})=(G/N)^{(m)}=1$, so $G^{(m)}\leq N$.
Since $G^{(m)}$ is normal in $G$ and hence in $N$, 
Lemma \ref{lem:basicpropertiesofmSigma}.2 implies that $G^{(m+n)}=(G^{(m)})^{(n)}\leq N^{(n)}=1$.
Hence, $l(G)\leq m+n$.
\end{proof}

\subsection{Twisted wreath products}
Let $A$ and $G_0\leq G$ be finite groups together with a (right) action of $G_0$ on $A$. 
The set of $G_0$-invariant functions from $G$ to $A$,
$$
 \Ind_{G_0}^G (A) = \left\{ f\colon G\to A \;\mid\; f(\sigma\tau)=f(\sigma)^\tau,\ \forall \sigma\in G, \tau\in G_0\right\},
$$ 
forms a group under pointwise multiplication, on which $G$ acts from the right
by $f^\sigma(\tau)= f(\sigma\tau)$, for all $\sigma,\tau\in G$. The \textbf{twisted wreath product} is defined to be the semidirect product
\[
 A\wr_{G_0} G = \Ind_{G_0}^G(A) \rtimes G,
\]
cf.~\cite[Definition 13.7.2]{FriedJarden}.
The following observation will be used several times.
\begin{lem}\label{lem:exact}
Let $A$ and $G_0\leq G$ be finite groups together with an action of $G_0$ on $A$, and let $A_0$ be a normal subgroup of $A$ that is $G_0$-invariant. Then $G_0$ acts on $A/A_0$ and
\[
\xymatrix@1{1\ar[r]&\Ind_{G_0}^G(A_0) \ar[r]&A\wr_{G_0} G\ar[r]^-{\alpha}& (A/A_0)\wr_{G_0} G \ar[r]&1, }
\]
is an exact sequence of finite groups. Here  $\alpha(f,\sigma) = (\bar{f},\sigma)$, where $\bar{f}(\tau) = f(\tau)A_0$, for $\tau\in G$.
\end{lem}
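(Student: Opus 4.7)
The plan is to check, in order, (i) that $G_0$ acts on $A/A_0$ and $\alpha$ is well-defined, (ii) that $\alpha$ is a group homomorphism, (iii) that $\ker\alpha = \Ind_{G_0}^G(A_0)\times\{1\}$, and (iv) that $\alpha$ is surjective. The first three items are essentially formal; only (iv) requires a short construction.

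For (i), since $A_0$ is $G_0$-invariant the $G_0$-action on $A$ descends to $A/A_0$, so $(A/A_0)\wr_{G_0}G$ makes sense. For $f\in\Ind_{G_0}^G(A)$, the pointwise reduction $\bar f(\rho)=f(\rho)A_0$ satisfies
\[
\bar f(\sigma\tau)=f(\sigma\tau)A_0=f(\sigma)^\tau A_0=(f(\sigma)A_0)^\tau=\bar f(\sigma)^\tau
\]
for $\tau\in G_0$, hence $\bar f\in\Ind_{G_0}^G(A/A_0)$. For (ii), the semidirect-product multiplication $(f_1,\sigma_1)(f_2,\sigma_2)=(f_1\cdot f_2^{\sigma_1},\sigma_1\sigma_2)$ uses only pointwise product on the indexed component and right translation on $G$, both of which commute with reduction modulo $A_0$; so $\alpha$ is a homomorphism. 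For (iii), $\alpha(f,\sigma)=(1,1)$ forces $\sigma=1$ and $\bar f\equiv 1$, i.e.\ $f$ takes values in $A_0$; conversely any such $(f,1)$ lies in $\ker\alpha$, giving the asserted identification.

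The only point requiring actual construction is (iv). Given $(g,\sigma)\in (A/A_0)\wr_{G_0} G$, fix a set $T\subseteq G$ of representatives for the left cosets of $G_0$ in $G$, so that every $\rho\in G$ has a unique expression $\rho=t\tau$ with $t\in T$ and $\tau\in G_0$. For each $t\in T$ choose any lift $a_t\in A$ of $g(t)\in A/A_0$, and define $f\colon G\to A$ by $f(t\tau):=a_t^\tau$. Then $f(t\tau\tau')=a_t^{\tau\tau'}=f(t\tau)^{\tau'}$, so $f\in\Ind_{G_0}^G(A)$, and
\[
\bar f(t\tau)=a_t^\tau A_0=(a_tA_0)^\tau=g(t)^\tau=g(t\tau),
\]
so $\bar f=g$ and $\alpha(f,\sigma)=(g,\sigma)$.

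The main (minor) obstacle is the lifting in (iv): one cannot lift $g$ pointwise to a $G_0$-equivariant function by independent choices at every element of $G$. The remedy is to fix the lift only on one representative per $G_0$-coset and propagate it by the $G_0$-action; this produces an equivariant lift precisely because $G_0$ preserves $A_0$ and $g$ is already $G_0$-equivariant.
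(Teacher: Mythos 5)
Your proof is correct and follows the same route as the paper; the paper's proof is much terser (it verifies only that reduction commutes with the $G$-action, i.e.\ $\overline{f^\sigma}=\bar f^\sigma$, and declares surjectivity and the kernel computation ``trivial''), whereas you spell out the coset-representative lifting that underlies surjectivity. Your step (iv) is exactly the standard argument the paper is implicitly invoking, so there is no genuine difference in method.
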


\begin{proof}
$G_0$ acts on $A/A_0$ by $(aA_0)^\sigma = a^\sigma A_0$. To see that $\alpha$ is a homomorphism note that $\bar{f}^\sigma(\tau) = f(\sigma\tau) A_0 = f^{\sigma}(\tau) A_0 = \overline{f^\sigma}(\tau)$. It is trivial that $\alpha$ is surjective and that $\ker(\alpha) = \Ind_{G_0}^G(A_0)$.
\end{proof}

The main objective of this section is to show that the abelian-simple length grows in wreath products:

\begin{prop}\label{prop:m-sigmalength}
Let $m\in \NN$, let $A$ be a nontrivial finite  group,  and let $G_0\leq G$ be finite groups together with an action of $G_0$ on $A$. Assume that $$[G^{(m)}G_0:G_0] > 2^m.$$ Then $$(A\wr_{G_0} G)^{(m+1)} \cap \Ind_{G_0}^G(A)\neq 1.$$
\end{prop}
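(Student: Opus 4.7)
The plan is a double induction: first on $|A|$ (using Lemma~\ref{lem:exact} to reduce the structure of $A$), and then on $m$. Writing $\Gamma = A\wr_{G_0} G$ and $N = \operatorname{Ind}_{G_0}^G(A)$, the first reduction proceeds as follows. If $A_0\lhd A$ is a proper nontrivial $G_0$-invariant normal subgroup, the surjection $\alpha\colon \Gamma \twoheadrightarrow (A/A_0)\wr_{G_0}G$ of Lemma~\ref{lem:exact} sends $\Gamma^{(m+1)}$ onto $((A/A_0)\wr_{G_0}G)^{(m+1)}$ by Lemma~\ref{lem:basicpropertiesofmSigma}(\ref{cond1:basicpropertiesofmSigma}). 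A nontrivial element of $((A/A_0)\wr_{G_0}G)^{(m+1)} \cap \operatorname{Ind}_{G_0}^G(A/A_0)$ (from the induction on $|A|$) has trivial outer $G$-component; any lift to $\Gamma^{(m+1)}$ thus also has trivial $G$-component (since $\alpha$ is the identity on $G$), so lies in $N$, and is nontrivial. Hence we may assume $A$ is $G_0$-characteristically simple, i.e.\ $A \cong T^k$ with $T$ simple and $G_0$ transitively permuting the factors.

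For the base case $m = 0$ (with $[G:G_0] > 1$), I would split into three sub-cases. If $A$ is non-abelian, then $N' = N$ gives $N \subseteq \Gamma'$; and every simple quotient of $\Gamma$ kills $N$, because a nontrivial homomorphism from $N\cong S^{rk}$ to a simple group would be a projection to a single factor, incompatible with the $G$-action permuting the $rk$ simple factors transitively (and any abelian simple quotient kills $N$ because $N$ is perfect). Hence $N \subseteq M(\Gamma)\cap\Gamma' = D(\Gamma)$. If $A$ is abelian with nontrivial $G_0$-action, then $A_{G_0} = 0$ gives $[N,G] = N \subseteq \Gamma'$, and $N$ abelian is killed by any non-abelian simple quotient while $N_G = 0$ places $N$ in the kernel of every abelianization, so $N\subseteq D(\Gamma)$. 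Finally, if $A = \mathbb{F}_p$ with trivial $G_0$-action, then $N \cong \mathbb{F}_p[G/G_0]$ and the augmentation submodule $[N,G]$ is nontrivial (since $[G:G_0] > 1$) and lies in $D(\Gamma)$ by the same reasoning.

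For the inductive step ($m \geq 1$), a key structural fact is that $\Gamma^{(i)} = K_i \rtimes G^{(i)}$ for all $i\geq 0$, where $K_i := \Gamma^{(i)} \cap N$. This follows by induction on $i$ from the inclusion $G^{(i)} \leq \Gamma^{(i)}$ (viewed inside $\Gamma$): indeed, $(G^{(i)})' \leq (\Gamma^{(i)})'$, and for any maximal normal $U \lhd \Gamma^{(i)}$ with simple quotient, $U\cap G^{(i)}$ is either $G^{(i)}$ or a maximal normal of $G^{(i)}$ with the same simple quotient, so $M(G^{(i)}) \leq M(\Gamma^{(i)})$ and hence $G^{(i+1)}\leq \Gamma^{(i+1)}$. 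Using Lemma~\ref{lem:basicpropertiesofmSigma}(\ref{cond1:basicpropertiesofmSigma}), for every $g\in G^{(i)}$ there is a lift $(f_g, g)\in \Gamma^{(i)}$. A direct commutator calculation in the semidirect product gives $[(n,1),(f_g,g)] = (nf_g n^{-g} f_g^{-g}, 1)$ for $n\in K_i$, which lies in $(\Gamma^{(i)})'\cap N$; combining with a Melnikov analysis (elements of the form $(1-g)K_i$ lie in every abelianization kernel of $(\Gamma^{(i)})^{\mathrm{ab}}$, and $K_i$ abelian kills non-abelian simple quotients) gives in the abelian-$N$ case the crucial inclusion $K_{i+1} \supseteq I_{G^{(i)}} K_i$. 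Iterating yields $K_{m+1} \supseteq I_{G^{(m)}} I_{G^{(m-1)}} \cdots I_G \cdot N$ (with the appropriate analogue in the non-abelian case, where one instead propagates the equality $K_i = N$ via the quotient $\Gamma^{(1)} \twoheadrightarrow A \wr_{G^{(1)}\cap G_0} G^{(1)}$ coming from a $G^{(1)}$-orbit decomposition of $N$).

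The main obstacle is to verify that this iterated construction is nontrivial under the hypothesis $[G^{(m)}G_0:G_0] > 2^m$. Since each $G^{(i)}$ is normal in $G$, all $G^{(i)}$-orbits on $G/G_0$ have the uniform size $[G^{(i)}G_0:G_0] \geq [G^{(m)}G_0:G_0] > 2^m > 2^i$. This lower bound on orbit sizes provides exactly the combinatorial room needed at each of the $m+1$ augmentation steps to guarantee that $I_{G^{(i)}}$ does not annihilate the image of the previous stage in $N$; the factor of $2$ per level matches the minimal factor $[G^{(i)}:G^{(i+1)}] \geq 2$ ensured by Lemma~\ref{lem:log_length}, and this is the technical heart of the argument.
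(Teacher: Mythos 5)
Your overall strategy tracks the paper's quite closely: the reduction of $A$ to a $G_0$-characteristically-simple quotient via Lemma~\ref{lem:exact} and Lemma~\ref{lem:basicpropertiesofmSigma}(\ref{cond1:basicpropertiesofmSigma}) is exactly what the paper does, and the structural observations $\Gamma^{(i)}=K_i\rtimes G^{(i)}$ and $K_{i+1}\supseteq I_{G^{(i)}}K_i$ (in the abelian case) are correct and in the spirit of the paper's filtration $H^{(i)}=V_i\rtimes G^{(i)}$. Your treatment of the nonabelian case via the quotient $\Gamma^{(1)}\twoheadrightarrow A\wr_{G^{(1)}\cap G_0}G^{(1)}$ and induction on $m$ is a legitimate alternative to the paper's direct computation in Lemma~\ref{lem:simple_nonabelian}; it works because the restriction map carries $\Ind_{G_0}^G(A)$ onto $\Ind_{G^{(1)}\cap G_0}^{G^{(1)}}(A)$ and the index hypothesis transfers unchanged.

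The gap is in the abelian case, and it is a real one. After reducing to $A$ an irreducible $\mathbb{F}_p[G_0]$-module, you derive $K_{m+1}\supseteq I_{G^{(m)}}\cdots I_G\cdot N$ with $N=\Ind_{G_0}^G(A)$ and then argue nontriviality by bounding the sizes of $G^{(i)}$-orbits on $G/G_0$. That support/orbit argument is precisely the paper's Lemma~\ref{prop:trvrep}, and it applies only when $N$ is the permutation module $\mathbb{F}_p[G/G_0]$, i.e.\ when $G_0$ acts \emph{trivially} on $A$ (so $A\cong\mathbb{F}_p$). For nontrivial irreducible $A$, the module $N$ is not a permutation module, elements of $N$ do not have a meaningful ``support in $G/G_0$'' that controls the iterated augmentations, and the sizes of $G^{(i)}$-orbits on $G/G_0$ say nothing about whether $I_{G^{(i)}}$ annihilates a given submodule. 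Your proposal gives no argument for this case. This is exactly the point at which the paper takes a different path: in the ``First Case'' of Lemma~\ref{lem:abeliancase} it uses Lemma~\ref{lem:nontrivialaction} to establish the exact equality $D(H)=L\rtimes D(G)$ (not just the inclusion $\supseteq I_G L$), and then \emph{replaces} $G$ by $\tilde G = D(G)$ and $G_0$ by $\tilde G\cap G_0$ and applies the induction hypothesis to the smaller wreath product $V\wr_{\tilde G_0}\tilde G$ --- just as you did in the nonabelian case, but which you do not do here. Without that descent, the iterated-augmentation lower bound is not known to be nonzero when the $G_0$-action on $A$ is nontrivial, and the proof does not close.

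One smaller imprecision: after the reduction you assert that $A\cong T^k$ with ``$G_0$ transitively permuting the factors.'' For $T$ nonabelian simple this is right, but for $T\cong\mathbb{F}_p$ a $G_0$-characteristically-simple group is simply an irreducible $\mathbb{F}_p[G_0]$-module, and $G_0$ need not act as a permutation group on any basis. You don't actually use the claimed transitivity afterwards, so this is cosmetic, but it is worth fixing because it feeds into the mistaken intuition that the abelian case behaves like a permutation module throughout.
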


\begin{rem}
We do not know whether the assumption $[G^{(m)}G_0:G_0]>2^m$ can be replaced by the weaker condition $G^{(m)}\not\subseteq G_0$.
Our proof makes essential use of the stronger assumption only once (namely in the ``Second Case'' of Lemma \ref{lem:abeliancase}).
\end{rem}

\subsection{Proof of Proposition \ref{prop:m-sigmalength}}

The rest of this section is devoted to proving Proposition \ref{prop:m-sigmalength}.
It is rather technical and the auxiliary statements will 
not be used anywhere else in this paper.
First we deal with several special cases depending on $A$ and on the action of $G_0$, and then deduce the proposition.

\subsubsection*{Direct product of non-abelian simple groups}

\begin{lem}\label{lem:simple_nonabelian}
Under the assumptions of Proposition \ref{prop:m-sigmalength},
let $S$ be a non-abelian finite simple group, and assume that $A\cong \prod_{j=1}^n S_j$ with $n\geq 1$ and $S_j\cong S$ for all $j$. 
Let $H= A\wr_{G_0} G$. Then $H^{(m+1)} = \Ind_{G_0}^G (A)\rtimes G^{(m+1)}$. In particular, $H^{(m+1)}\cap \Ind_{G_0}^G (A)\neq 1$.
\end{lem}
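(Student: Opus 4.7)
The plan is to prove the stronger statement $H^{(i)} = B \rtimes G^{(i)}$ for all $0 \leq i \leq m+1$ by induction on $i$, where $B := \Ind_{G_0}^G(A)$. The base case $i=0$ is the definition of $H$. For the inductive step, I would first note that applying Lemma~\ref{lem:basicpropertiesofmSigma}.1 to the projection $B\rtimes G^{(i-1)}\to G^{(i-1)}$ yields $H^{(i)}\leq B\rtimes G^{(i)}$ together with $H^{(i)}\cdot B = B\rtimes G^{(i)}$, reducing the inductive step to the claim $B\leq H^{(i)} = D(B\rtimes G^{(i-1)})$. Equivalently, I need $B$ to sit inside every open normal subgroup $N\lhd B\rtimes G^{(i-1)}$ whose quotient is abelian or simple.

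The abelian case is immediate, because $B$, as a direct product of copies of the non-abelian simple group $S$, is perfect, so $B = [B,B] \leq (B\rtimes G^{(i-1)})'\leq N$. For the simple case with quotient $Q$, either $B \leq N$ (done) or $BN/N = Q$, in which case $Q$ is a simple quotient of $B$. By Lemma~\ref{lem:directproductsimplegroups}, the intersection $B\cap N$ equals $\bigoplus_{x\in J}S_x$ for some cofinite subset $J$ of the index set $X$ of simple factors of $B$, missing a single component $x_0 \in X$. Normality of $B\cap N$ in the twisted wreath product forces $\{x_0\}$ to be $G^{(i-1)}$-invariant, so $x_0$ must be a $G^{(i-1)}$-fixed point of $X$.

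However, there is a natural $G$-equivariant surjection $X\to G/G_0$ sending each simple factor to the coset it sits above in the decomposition $B\cong A^{G/G_0}$, and normality of $G^{(i-1)}$ in $G$ implies that $G^{(i-1)}$ fixes a coset of $G_0$ if and only if $G^{(i-1)}\subseteq G_0$. The hypothesis $[G^{(m)}G_0:G_0]>2^m\geq 1$ gives $G^{(m)}\not\subseteq G_0$, and hence $G^{(i-1)}\not\subseteq G_0$ for all $0\leq i-1\leq m$ (since $G^{(i-1)}\supseteq G^{(m)}$). This contradicts the existence of such an $x_0$, completing the induction. The ``in particular'' conclusion then follows since $B\neq 1$.

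The main obstacle I expect is the simple-quotient case: one must carefully describe the structure of $B\cap N$ via the classification of normal subgroups in a direct product of non-abelian simple groups, and then translate the normality condition inside the twisted wreath product into a fixed-point constraint on $X$. The argument crucially exploits the normality of $G^{(i-1)}$ inside the \emph{ambient} $G$ (not merely inside itself), which is what allows the weak consequence $G^{(m)}\not\subseteq G_0$ of the hypothesis to suffice here, consistent with the remark that the full strength $[G^{(m)}G_0:G_0]>2^m$ is used only in the abelian analogue.
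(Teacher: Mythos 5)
Your proof is correct and follows essentially the same route as the paper: the same induction establishing $H^{(i)} = B\rtimes G^{(i)}$, the same reduction to showing $B\leq N$ for every normal subgroup $N$ of $B\rtimes G^{(i-1)}$ with simple or abelian quotient, and the same appeal to Lemma~\ref{lem:directproductsimplegroups} followed by a contradiction derived from the normality of $G^{(i-1)}$ in $G$ together with $G^{(i-1)}\not\subseteq G_0$. The only cosmetic difference is in the final contradiction: the paper exhibits an explicit $f\in B$ and $\tau\in G^{(i)}$ with $(B\cap N)^\tau\neq B\cap N$, whereas you reformulate the same observation as a fixed-point constraint via the $G$-equivariant map from the set of simple factors of $B$ to $G/G_0$.
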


\begin{proof}
Let $L=\Ind_{G_0}^G(A)$.  
We prove by induction on $i$ that $H^{(i)} = L\rtimes G^{(i)}$, for every $i=0,\ldots, m+1$. For $i=0$ the assertion is trivial.

 Next assume that $i<m+1$ and $H^{(i)} = L \rtimes G^{(i)}$. It is clear that $H^{(i+1)}\leq L\rtimes G^{(i+1)}$, since the former is the intersection of all normal subgroups of $ L \rtimes G^{(i)}$ whose quotient is either simple or abelian and the latter is the intersection of the subfamily of those normal subgroups that contain $L$. Therefore it suffices to show that if $U$ is a normal subgroup of $L\rtimes G^{(i)}$ with $(L\rtimes G^{(i)})/U$ either simple or abelian, then $L\leq U$.

Assume the contrary, so  $L\cap U$ is a proper normal subgroup of $L$. The quotient $L/(L\cap U)\cong LU/U$ is either simple or abelian, as a nontrivial normal subgroup of the simple resp.~abelian group $(L\rtimes G^{(i)})/ U$. Since $L$ is a direct product of copies of $A$, hence of $S$, $L/(L\cap U)$ is not abelian.  Lemma~\ref{lem:directproductsimplegroups} implies that $L\cap U$ is  the kernel of one of the projections $L\to A\to S_j$.  
 More precisely, there exist $\sigma\in G$ and $1\leq j\leq n$ such that if we denote by $\pi_j\colon A\to S_j$ the projection map, then 
$L\cap U=\{f\in \Ind_{G_0}^G (A)\mid \pi_j(f(\sigma))=1\}$.

The assumption $[G^{(m)}G_0:G_0] > 2^m$ implies that $G^{(i)}\not\subseteq G_0$, since $i\leq m$.
So since $G^{(i)}$ is normal in $G$, there exists $\tau\in G^{(i)}\smallsetminus (G_0)^{\sigma^{-1}}$. 
It follows that
\begin{eqnarray*}
L\cap U&=&(L\cap U)^\tau \;=\;  \{f\in \Ind_{G_0}^G (A)\mid \pi_j(f(\sigma))=1\}^\tau \\
&=&  \{f\in \Ind_{G_0}^G (A)\mid \pi_j(f(\tau^{-1} \sigma))=1\}. 
\end{eqnarray*}
Since $\tau^{-1}\sigma G_0\neq\sigma G_0$ by the choice of $\tau$, there exists $f\in L$ such that $f(\tau^{-1} \sigma) \neq 1$ but $f(\sigma)= 1$. This implies that $f\in (L\cap U)\smallsetminus (L\cap U)^{\tau}=\emptyset$ --  a contradiction.
Thus $L\leq U$, as needed.
\end{proof}

\subsubsection*{Nontrivial irreducible representation}
We say that a representation $V$ of $G_0$ is \textbf{nontrivial} if there exist $v\in V$ and $\sigma\in G_0$ such that $v^\sigma\neq v$.  

\begin{lem}\label{lem:nontrivialaction}
Let $G_0\leq G$ be finite groups, let $p$ be a prime number and let $A$ be a nontrivial finite irreducible $\mathbb{F}_p$-representation of $G_0$. Let $H=A\wr_{G_0} G$. Then $H' = \Ind_{G_0}^G(A) \rtimes G'$.
\end{lem}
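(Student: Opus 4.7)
The plan is to prove both inclusions of $H' = L \rtimes G'$, where $L := \Ind_{G_0}^G(A)$. The easy direction $H' \subseteq L \rtimes G'$ follows at once from $H/(L\rtimes G') \cong G/G'$ being abelian. Since $G' \leq H'$ is trivial, the reverse inclusion reduces entirely to showing $L \subseteq H'$.

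To this end, note that $A$ being an $\mathbb{F}_p$-representation makes $L$ an elementary abelian $p$-group, and hence an $\mathbb{F}_p[G]$-module via the action $f^\sigma(\tau)=f(\sigma\tau)$ already specified in the paper. Unwinding the semidirect-product conventions yields $\sigma^{-1}f\sigma = f^\sigma$ in $H$, whence the commutator identity $[f,\sigma] = f^\sigma - f$ (in additive notation). Thus $[L,G]$, which certainly lies in $H'$, coincides with the $\mathbb{F}_p[G]$-submodule $I_G L$ of $L$, where $I_G$ denotes the augmentation ideal. Proving $L\subseteq H'$ therefore reduces to showing that the $G$-coinvariants $L/I_G L$ vanish.

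This is in essence Shapiro's lemma in degree zero: $L_G \cong A_{G_0}$. Concretely, I would decompose $L = \bigoplus_{\sigma_0 \in G/G_0} L_{\sigma_0}$, where $L_{\sigma_0}$ is the subspace of functions supported on a single coset $\sigma_0 G_0$. Evaluation at $1$ identifies $L_e$ with $A$ as $\mathbb{F}_p[G_0]$-modules, and under this identification $[L_e,G_0]$ corresponds to $I_{G_0}A$. The hypothesis that $A$ is nontrivial makes $I_{G_0}A$ a nonzero submodule of the irreducible module $A$, so $I_{G_0}A = A$ and therefore $L_e \subseteq H'$. Since $H'$ is normal in $H$ and conjugation by $\sigma_0 \in G \leq H$ carries $L_e$ onto $L_{\sigma_0}$, every $L_{\sigma_0}$ also lies in $H'$. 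Summing over cosets yields $L \subseteq H'$.

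The main obstacle I anticipate is largely bookkeeping: pinning down the semidirect-product and induced-module conventions so that the commutator identity and the coset-conjugation statement come out with consistent signs. Once those signs are settled, the proof is a direct application of irreducibility plus the Shapiro-style decomposition, and no deeper input is required.
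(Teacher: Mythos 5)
Your proof is correct and takes essentially the same approach as the paper: the paper's element $\tilde{f}_{a,\sigma}=[f_a,\sigma]$ is exactly your commutator $f^\sigma-f$ supported on the trivial coset, the paper's $A_0$ is your $I_{G_0}A$, and the paper likewise concludes by observing that these commutators generate $\Ind_{G_0}^G(A)$ as a normal subgroup of $H$. Your reformulation via the coset decomposition and the coinvariants $L_G\cong A_{G_0}$ is just a slightly more conceptual packaging of the same computation.
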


\begin{proof}
Let $A_0 $ be the subspace of $A$ spanned by $\{a^\sigma - a \mid a\in A, \sigma\in G_0\}$. 
Since $(a^{\sigma} - a)^{\tau} = (a^{\tau})^{\tau^{-1}\sigma\tau}-a^{\tau}\in A_0$
for all $\tau\in G_0$, we get that $A_0$ is $G_0$-invariant.  Since the action of $G_0$ is nontrivial, $A_0\neq 0$. The assumption that $A$ is an irreducible representation then implies that $A_0=A$.

For $a\in A$ and $\sigma \in G_0$, let $\tilde{f}_{a,\sigma}\in\Ind_{G_0}^G(A)$ be defined by $\tilde{f}_{a, \sigma}(\tau) = a^{\sigma\tau} - a^\tau$ if $\tau\in G_0$ and $\tilde{f}_{a,\sigma}(\tau)=1$ if $\tau\in G\smallsetminus G_0$. Then $\tilde{F}= \{\tilde{f}_{a,\sigma} \mid a\in A, \sigma\in G_0\}$ generates the subgroup $\{f\mid f(\tau)=1,\ \forall \tau\in G\smallsetminus G_0\}$ of $\Ind_{G_0}^G(A)$. Hence $\tilde{F}$ generates $\Ind_{G_0}^G(A)$ as a normal subgroup of $H=A\wr_{G_0} G$.

But $\tilde{f}_{a,\sigma} = [f_a,\sigma]$, where $f_a(\tau) = a^{\tau}$ if $\tau\in G_0$ and $f(\tau)=1$ otherwise.  So $\tilde{f}_{a,\sigma}\in H'$ and hence $\Ind_{G_0}^G(A)\leq H'$. So $\Ind_{G_0}^G(A)\rtimes G'\leq H'$. On the other hand, since $H/(\Ind_{G_0}^G(A)\rtimes G') \cong G/G'$ is abelian, $\Ind_{G_0}^G(A)\rtimes G'\geq H'$.
Thus, $H'=\Ind_{G_0}^G(A)\rtimes G'$.
\end{proof}

\subsubsection*{Trivial representation}

Let $p$ be a prime number and $G$ a finite group acting on a finite set $X$.
Then the group ring $\mathbb{F}_p[G]$ acts on the $\mathbb{F}_p$-vector space $V_0(G,X)=(\mathbb{F}_p)^X$ of functions from $X$ to $\mathbb{F}_p$. 
For $i\geq0$ we let
$V_{i+1}(G,X)$ be the subspace of $V_{i}(G,X)$ generated by the elements $f^{1-g}=f-f^g$, where $f\in V_i(G,X)$ and $g\in G^{(i)}$.

\begin{lem}\label{prop:trvrep}
Let $G$ be a finite group acting on a finite set $X$, and let $m\in\mathbb{N}$. 
Assume there exists $x\in X$ such that $|G^{(m)}x|>2^m$. Then $V_{m+1}(G,X)\neq 0$.
\end{lem}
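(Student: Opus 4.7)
The plan is to exhibit an explicit nonzero element of $V_{m+1}(G,X)$ built from $\delta_x$. Specifically, I will construct $f \in V_m(G,X)$ with $f(x) \ne 0$ whose support is contained in a certain set $T \subseteq X$ of at most $2^m$ points, and then use the hypothesis $|G^{(m)}x| > 2^m$ to pick $h \in G^{(m)}$ with $h^{-1}x \notin T$; the element $f^h - f \in V_{m+1}(G,X)$ will be nonzero because its value at $h^{-1}x$ equals $f(x) \ne 0$.

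The construction of $f$ proceeds by choosing $g_0 \in G^{(0)}, g_1 \in G^{(1)}, \ldots, g_{m-1} \in G^{(m-1)}$ inductively so that for every nonempty $S = \{i_1 < \cdots < i_k\} \subseteq \{0, \ldots, m-1\}$ the product $g_{i_1} g_{i_2} \cdots g_{i_k}$ does not fix $x$. Once $g_0, \ldots, g_{k-1}$ have been chosen, the additional constraints on $g_k$ forbid $g_k x$ from lying in a set of at most $2^k$ points of $X$ (one for each subset of $\{0, \ldots, k-1\}$). Since $|G^{(k)} x| \ge |G^{(m)} x| > 2^m \ge 2^k$, with strict inequality because $k \le m-1$, the orbit $G^{(k)} x$ is not contained in this forbidden set, and a valid $g_k$ exists.

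I then set $f := \delta_x \cdot (g_0-1)(g_1-1) \cdots (g_{m-1}-1)$, which lies in $V_m(G,X)$. Using $\delta_x \cdot g = \delta_{g^{-1} x}$ and expanding the product gives $f = \sum_{S \subseteq \{0, \ldots, m-1\}} (-1)^{m-|S|} \delta_{y_S}$, where $y_S = (g_{i_1} \cdots g_{i_k})^{-1} x$ for $S = \{i_1 < \cdots < i_k\}$. By the construction, $y_S = x$ only when $S = \emptyset$, so the coefficient of $\delta_x$ in $f$ equals $(-1)^m$, which is nonzero in $\mathbb{F}_p$ for every prime $p$. The support of $f$ is contained in $T := \{y_S : S \subseteq \{0, \ldots, m-1\}\}$, a set of at most $2^m$ points of $X$.

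Finally, $|G^{(m)} x| > 2^m \ge |T|$ forces $G^{(m)} x \not\subseteq T$, so there exists $h \in G^{(m)}$ with $h^{-1} x \notin T$. Then $(f^h - f)(h^{-1} x) = f(x) - f(h^{-1} x) = (-1)^m - 0 \ne 0$, proving $V_{m+1}(G,X) \ne 0$. The main technical step is the inductive construction of the $g_i$: the strict inequality $|G^{(k)} x| > 2^k$ needed at each level is exactly what the single hypothesis $|G^{(m)} x| > 2^m$ delivers, and this is where the threshold $2^m$ enters in a sharp way—it is just enough room to dodge the combinatorially growing list of forbidden images.
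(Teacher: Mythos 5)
Your proof is correct and follows essentially the same route as the paper: both construct $\delta_x\cdot\prod_i(g_i-1)$ by inductively choosing the $g_i$ so the resulting function does not vanish, with $|G^{(m)}x|>2^m$ supplying the room because the support at most doubles at each step. The only differences are cosmetic: the paper tracks non-vanishing by shifting the support out of itself rather than pinning down the coefficient at $x$, and draws all the $g_i$ from $G^{(m)}$ directly (which sits inside every $G^{(k)}$ for $k\le m$) instead of taking $g_k\in G^{(k)}$.
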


\begin{proof}
For $f\in V_0(G,X)$ let ${\rm supp}(f)=\{y\in X:f(y)\neq 0\}$. 
Let $f_0\in V_0(G,X)$ be defined by $f_0(x)=1$ and $f_0(y)=0$ for all $y\neq x$. 
We construct inductively a sequence $g_1,...,g_{m+1}\in G^{(m)}$ such that for each $0\leq k\leq m+1$, 
$$
 f_k:=f_0^{(1-g_1)\cdots(1-g_k)}\in V_k(G,X)
$$ 
satisfies $0<|{\rm supp}(f_k)|\leq 2^k$. 
For $k=0$, there is nothing to show. 
Let $0<k\leq m$ and assume $g_1,\dots,g_k$ are already constructed. 
Since $\emptyset\neq {\rm supp}(f_k)\subseteq G^{(m)}x$ and $|{\rm supp}(f_k)|\leq 2^k<|G^{(m)}x|$, there exists $g_{k+1}\in G^{(m)}$ with ${\rm supp}(f_k^{g_{k+1}})\not\subseteq{\rm supp}(f_k)$, hence $f_{k+1}=f_k-f_k^{g_{k+1}}\neq 0$. 
Clearly, $|{\rm supp}(f_{k+1})|\leq 2|{\rm supp}(f_k)|\leq 2^{k+1}$, concluding the induction step.
\end{proof}

\subsubsection*{Abelian group}

\begin{lem}\label{lem:abeliancase}
Let $p$ be a prime number. Let $n\geq 0$, let $k>0$, let $G_0\leq G$ be finite groups, 
and let $G_0$ act on $A=(\mathbb{F}_p)^k$.
If $n>0$, assume in addition that $[G^{(n-1)}G_0:G_0]>2^{n-1}$.
Then $(A\wr_{G_0} G)^{(n)}\cap{\rm Ind}_{G_0}^G(A)\neq 1$.
\end{lem}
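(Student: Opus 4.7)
The plan is to first establish a structural formula for the generalized derived series of an arbitrary semidirect product with abelian kernel: for $K = L \rtimes \Gamma$ with $L$ abelian, $D(K) = ((\Gamma-1)L)\rtimes D(\Gamma)$. Indeed, $K' = ((\Gamma-1)L)\rtimes\Gamma'$ follows from $[\ell,\gamma]=\ell^\gamma-\ell$, while $D_0(K) = L\rtimes D_0(\Gamma)$ because the abelian normal subgroup $L$ is killed in every non-abelian simple quotient of $K$; their intersection gives the formula. Iterating for $H = A\wr_{G_0} G = L\rtimes G$ with $L = \Ind_{G_0}^G(A)$ yields $H^{(i)} = L_i \rtimes G^{(i)}$, where $L_0 = L$ and $L_{i+1} = (G^{(i)}-1)L_i$, so that $H^{(n)}\cap L = L_n$ and the claim reduces to showing $L_n \neq 0$.

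I would proceed by induction on $(n, |A|)$ in lexicographic order. The case $n = 0$ is trivial since $L_0 = L \neq 0$. For $n \geq 1$, I split the argument into two cases according to whether $G_0$ acts nontrivially on $A$ (the ``first case'') or trivially on $A$ (the ``second case''), matching the two cases alluded to in the remark after Proposition~\ref{prop:m-sigmalength}; only the second will exploit the full strength of the hypothesis $[G^{(n-1)}G_0:G_0] > 2^{n-1}$.

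For the first case, I would first reduce to $A$ irreducible: choose a maximal proper $G_0$-submodule $A_0 < A$ with $B := A/A_0$ irreducible, apply the inductive hypothesis to $(G, G_0, B, n)$ (valid since $|B| < |A|$), and lift a nontrivial element of $(B\wr_{G_0}G)^{(n)} \cap \Ind_{G_0}^G(B)$ to $(A\wr_{G_0}G)^{(n)} \cap \Ind_{G_0}^G(A)$ via Lemma~\ref{lem:exact} and Lemma~\ref{lem:basicpropertiesofmSigma}.\ref{cond1:basicpropertiesofmSigma} (the preimage of $\Ind_{G_0}^G(B)$ under the surjection is exactly $\Ind_{G_0}^G(A)$). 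Once $A$ is irreducible with nontrivial $G_0$-action, Lemma~\ref{lem:nontrivialaction} yields $H' = L \rtimes G'$, whence $L_1 = (G-1)L = L$ and $H^{(1)} = L \rtimes G^{(1)}$ (which already handles $n = 1$). For $n \geq 2$, I apply the inductive hypothesis with parameter $n-1$ to the sub-wreath product $H_1 := A \wr_{G^{(1)} \cap G_0} G^{(1)}$, noting that $H_1$ embeds into $H^{(1)}$ because the Mackey decomposition of $\mathrm{Res}^G_{G^{(1)}} L$ contains $\Ind_{G^{(1)}\cap G_0}^{G^{(1)}}(A)$ as the trivial-double-coset summand. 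The needed inequality $[(G^{(1)})^{(n-2)}(G^{(1)}\cap G_0) : G^{(1)}\cap G_0] > 2^{n-2}$ equals $[G^{(n-1)} G_0 : G_0] > 2^{n-1}$ by a direct orbit-stabilizer count (using $G^{(n-1)} \leq G^{(1)}$ for $n \geq 2$), and the resulting nonzero element of $H_1^{(n-1)} \cap \Ind_{G^{(1)} \cap G_0}^{G^{(1)}}(A)$ sits inside $L_n$ under the Mackey embedding.

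For the second case, $L$ identifies with $A \otimes_{\FF_p} \FF_p[G/G_0]$ with $G$ permuting cosets. Fixing $0 \neq a \in A$, the $G$-equivariant injection $\phi\colon V_0(G, G/G_0) \hookrightarrow L$ given by $m \mapsto a \otimes m$ satisfies $\phi(V_i(G, G/G_0)) \leq L_i$ by a straightforward induction on $i$ (the generators $m - m^g$ of $V_{i+1}$ map to generators $(g-1)\phi(m)$ of $L_{i+1}$). Applying Lemma~\ref{prop:trvrep} with $m = n-1$ and $x = G_0 \in G/G_0$, whose $G^{(n-1)}$-orbit has size $[G^{(n-1)} G_0 : G_0] > 2^{n-1}$ by hypothesis, then produces $V_n(G, G/G_0) \neq 0$, so $L_n \supseteq \phi(V_n) \neq 0$. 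The main technical obstacle will be verifying the structural formula $H^{(i)} = L_i \rtimes G^{(i)}$ and carrying out the Mackey embedding cleanly in the first case, since these two ingredients together enable the inductive descent on $n$.
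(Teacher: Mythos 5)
Your proof is correct, and while it lands on the same inductive skeleton as the paper (reduction to an irreducible quotient via Lemma~\ref{lem:exact}, the nontrivial/trivial dichotomy for the $G_0$-action, recursion into a smaller wreath product over $D(G)$ in the nontrivial case, and Lemma~\ref{prop:trvrep} in the trivial case), you organize it around a genuinely different structural observation. You first establish a uniform formula $H^{(i)} = L_i \rtimes G^{(i)}$ with $L_{i+1} = (G^{(i)}-1)L_i$ for \emph{any} abelian kernel $L$, so that the statement reduces cleanly to $L_n \neq 0$; the paper derives this explicit description only in the trivial-action case and instead works with $D(H)$ directly in the nontrivial-action case. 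The sharper contrast is in the nontrivial-action recursion: the paper uses a \emph{restriction epimorphism} $\pi\colon L \rtimes D(G) \twoheadrightarrow V\wr_{\tilde{G}_0}\tilde{G}$, $(f,\sigma)\mapsto(f|_{\tilde{G}},\sigma)$, and pulls a nontrivial element back along $\pi$, whereas you instead \emph{embed} $A\wr_{G^{(1)}\cap G_0} G^{(1)}$ into $H^{(1)}$ as the trivial-double-coset Mackey summand and then push the nontrivial element forward, using the $L_i$-formula to track it into $L_n$. These are essentially dual moves; the paper's epimorphism avoids needing the $L_i$-bookkeeping (because Lemma~\ref{lem:basicpropertiesofmSigma}.\ref{cond1:basicpropertiesofmSigma} transports the derived series directly through surjections), while your embedding gives a more explicit description of where the nontrivial element actually lives. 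Your induction is on the pair $(n,|A|)$ lexicographically rather than on $n$ alone, which is a harmless reorganization, and in the trivial-action case you avoid the reduction to $A$ irreducible by injecting $V_0(G,G/G_0)$ into $L$ via $m\mapsto a\otimes m$ for a fixed $0\neq a\in A$ — a minor simplification over the paper's $V\cong\mathbb{F}_p$ identification.
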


\begin{proof}
We prove the lemma by induction on $n$. For $n=0$, the claim is obvious since $k>0$.
Therefore, assume that $n>0$ and that 
the statement of the lemma holds for all smaller $n$ (for arbitrary groups $G_0,G$ and arbitrary $k>0$).

Let $V_0$ be a maximal $G_0$-invariant proper subspace of $A$. 
Then $V=A/V_0\neq 0$ is an irreducible $\mathbb{F}_p$-representation of $G_0$.
By Lemma~\ref{lem:exact}, $V\wr_{G_0} G$ is a quotient of $A\wr_{G_0} G$ and
$\Ind_{G_0}^G(A)$ is the preimage of $\Ind_{G_0}^G(V)$. Thus, by Lemma \ref{lem:basicpropertiesofmSigma}.1 it suffices to show that $(V\wr_{G_0} G)^{(n)}\cap \Ind_{G_0}^G(V)\neq 1$.  

To this end set $H=V\wr_{G_0} G$ and $L=\Ind_{G_0}^G(V)$. If $N$ is a normal subgroup of $H$ with $H/N$ simple non-abelian, then $L/(N\cap L)=LN/N$ is an abelian normal subgroup of $H/N$, hence $L/(N\cap L)=1$, so $L\leq N$. Therefore $D_0(H) = L\rtimes D_0(G)$ (cf.~p.~\pageref{def:D0}) and  
\begin{equation}\label{eq:h1sigma}
D(H) = D_0(H) \cap H' =   (L\rtimes D_0(G))\cap H'.
\end{equation}
We distinguish between two cases:

\noindent{\sc First Case}: $G_0$ acts nontrivially on $V$.

By Lemma~\ref{lem:nontrivialaction} we have $H'=L\rtimes G'$. Plugging this into \eqref{eq:h1sigma} we get that $D(H) = L\rtimes D(G)$. 
Let $\tilde{G}=D(G)$ and $\tilde{G}_0=\tilde{G}\cap G_0$.
If $n>1$, then 
$$
 [\tilde{G}^{(n-2)}\tilde{G}_0:\tilde{G}_0]=[\tilde{G}^{(n-2)}:\tilde{G}^{(n-2)}\cap\tilde{G}_0]=[G^{(n-1)}:G^{(n-1)}\cap G_0]>2^{n-1}>2^{n-2}.
$$ 
Applying the induction hypothesis to $\tilde{G}_0\leq\tilde{G}$ and $V$ gives that
$(V\wr_{\tilde{G}_0}\tilde{G})^{(n-1)}\cap{\rm Ind}_{\tilde{G}_0}^{\tilde{G}}(V)\neq 1$.
The epimorphism $\pi:L\rtimes  D(G)\rightarrow V\wr_{\tilde{G}_0}\tilde{G}$, $(f,\sigma)\mapsto(f|_{\tilde{G}},\sigma)$, 
restricts by Lemma \ref{lem:basicpropertiesofmSigma}.1 to
an epimorphism $(V\wr_{G_0}G)^{(n)}=(L\rtimes  D(G))^{(n-1)}\rightarrow(V\wr_{\tilde{G}_0}\tilde{G})^{(n-1)}$,
and $\pi^{-1}({\rm Ind}_{\tilde{G}_0}^{\tilde{G}}(V))={\rm Ind}_{G_0}^G(V)$.
The claim follows.

\noindent{\sc Second Case}: $G_0$ acts trivially on $V$. 

Since $V$ is an irreducible $\mathbb{F}_p$-representation of $G_0$, $V\cong\mathbb{F}_p$. 
Note that $[f,\sigma] = f^{\sigma}-f$, for all $f\in \Ind_{G_0}^G(V)$ and $\sigma\in G$.  
Let $V_i=V_i(G,G/G_0)$.
Since the action of $G_0$ on $V$ is trivial we can identify $\Ind_{G_0}^G(V)$ with $V_0=(\mathbb{F}_p)^{G/G_0}$.
Hence, $[\Ind_{G_0}^G(V), G] = V_1$. 
Thus $H'\geq V_1\rtimes G'$. 
Since $(V_0\rtimes G)/(V_1\rtimes G')\cong (V_0/V_1)\times(G/G')$ is abelian,
we get that $H'= V_1\rtimes G'$.
Plugging this into \eqref{eq:h1sigma} we get that $D(H)=V_1\rtimes D(G)$. Inductively, if $H^{(i)} = V_{i}\rtimes G^{(i)}$, then as in the paragraph preceding \eqref{eq:h1sigma}, $D_0(H^{(i)}) = V_i \rtimes D_0(G^{(i)})$. Since $[f,\sigma]=f^{\sigma-1}\in V_{i+1}$ for all $f\in V_i$ and $\sigma\in G^{(i)}$, we have $(H^{(i)})' = V_{i+1} \rtimes (G^{(i)})'$ as above. So 
\[
H^{(i+1)}= D(H^{(i)}) = D_0(H^{i})\cap (H^{(i)})' = V_{i+1} \rtimes G^{(i+1)}.
\] 
In particular, $H^{(n)} \cap \Ind_{G_0}^G(V)=V_{n}$.
Finally, since we assume that $[G^{(n-1)} G_0 : G_0]>2^{n-1}$, Lemma~\ref{prop:trvrep} gives that $V_{n}\neq 0$, proving the claim.
\end{proof}

\subsubsection*{Proof of Proposition~\ref{prop:m-sigmalength}}
Assume that $G_0\leq G$ are finite groups, $G_0$ acts on the nontrivial finite group $A$, and $[G^{(m)}G_0:G_0]>2^m$.
We claim that $(A\wr_{G_0} G)^{(m+1)} \cap \Ind_{G_0}^G(A)\neq 1$.

Let $S$ be a simple quotient of $A$, and let $A_0$ be the intersection of all normal subgroups of $A$ with $A/N \cong S$. Then $A_0$ is characteristic in $A$, hence $G_0$-invariant, and $\bar{A}=A/A_0$ is isomorphic to a nonempty direct product of copies of $S$
(this is clear if $S$ is abelian; see \cite[Lemma 8.2.3]{RibesZalesskii} for the case where $S$ is non-abelian).
By Lemma~\ref{lem:exact}, $\bar{A}\wr_{G_0} G$ is a quotient of $A\wr_{G_0} G$ and the preimage of $\Ind_{G_0}^G(\bar{A})$ is $\Ind_{G_0}^G(A)$. Thus, by Lemma~\ref{lem:basicpropertiesofmSigma}.1, it suffices to show that $(\bar{A}\wr_{G_0} G)^{(m+1)} \cap \Ind_{G_0}^G(\bar{A})\neq 1$.

If $\bar{A}$ is non-abelian, then the assertion follows from Lemma~\ref{lem:simple_nonabelian}. If $\bar{A}$ is abelian, then the assertion follows from Lemma~\ref{lem:abeliancase}.
\qed

\section{Hilbertianity criterion}
\label{sec:criterion}

We shall use the twisted wreath product approach of Haran, which we briefly recall for the reader's convenience. 

We say that a tower of fields  $K\subseteq E_0\subseteq  E\subseteq  F\subseteq \hat{F}$ \textbf{realizes} a twisted wreath product $A\wr_{G_0} G$ if $\hat{F}/K$ is a Galois extension with Galois group isomorphic to $A\wr_{G_0} G$ and the tower of fields corresponds to the following subgroup series:
\[
A\wr_{G_0} G \geq \Ind_{G_0}^G(A) \rtimes G_0 \geq \Ind_{G_0}^G(A) \geq \{f\in \Ind_{G_0}^G(A)\mid f(1)=1\}\geq 1.
\]
This definition coincides with \cite[Remark~1.2]{HaranDiamond}.

\begin{thm}[Haran {\cite[Theorem 3.2 and the remark just before it]{HaranDiamond}}]\label{thm:haran}
Let $M$ be a separable algebraic extension of a Hilbertian field $K$. Suppose that for every $\alpha$ in $M$ and every $\beta$
in the separable closure $M_s$ of $M$ there exist
\begin{enumerate}
 \item a finite Galois extension $E$ of $K$ that contains $\beta$; let $G = \Gal(E/K)$;
 \item a field $E_0$ such that $K\subseteq E_0\subseteq M\cap E$ and $E_0$ contains $\alpha$; let $G_0=\Gal(E/E_0)$;
 \item a Galois extension $N$ of $K$ that contains both $M$ and $E$,
\end{enumerate}
such that for every nontrivial finite group $A$ and every action of $G_0$ on $A$ there is no realization $K\subseteq E_0\subseteq  E\subseteq  F\subseteq \hat{F}$ of $A\wr_{G_0} G$ with $\hat{F}\subseteq N$. 
Then $M$ is Hilbertian.
\end{thm}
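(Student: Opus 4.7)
My plan is to prove the contrapositive: assume $M$ is not Hilbertian and exhibit $\alpha\in M$ and $\beta\in M_s$ for which \emph{every} data $(E,E_0,N)$ as in (1)--(3) admits a nontrivial twisted wreath product realization inside $N$. Standard reductions produce an absolutely irreducible polynomial $f(t,X)\in M[t,X]$, monic and separable in $X$ of $X$-degree $\geq 2$, whose every specialization $f(\tau,X)$, $\tau\in M$, is reducible in $M[X]$.

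I would choose $\alpha\in M$ to be a primitive element of the $K$-subfield generated by the coefficients of $f$, so that $f\in K(\alpha)[t,X]$. Let $F_0$ denote a splitting field of $f$ over $K(\alpha,t)$, let $E_\infty$ be the algebraic closure of $K(\alpha)$ in $F_0$, and choose $\beta\in M_s$ so that $K(\alpha,\beta)=E_\infty$. Fix any $(E,E_0,N)$ as in (1)--(3) for these $\alpha,\beta$; then $E\supseteq E_\infty$, $E_0\supseteq K(\alpha)$, and $N$ is Galois over $K$ containing $M\cdot E$. Applying Hilbert's irreducibility theorem over the Hilbertian field $K$, I would produce $\tau\in K$ for which the specialization $t\mapsto\tau$ preserves the Galois group $\Gal(F_0/K(\alpha,t))$, its arithmetic-geometric short exact sequence $1\to A\to \Gal(F_0/K(\alpha,t))\to\Gal(E_\infty/K(\alpha))\to 1$ with $A=\Gal(F_0/E_\infty(t))$, and the factorization pattern of $f(\tau,X)$ over $M$ forced by the failure of Hilbertianity.

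Let $\hat F$ be the splitting field of $f(\tau,X)$ over $K$, and let $F$ be the field generated over $E$ by a root of a $G_0$-stable irreducible factor of $f(\tau,X)$ over $E_0$. By Haran's twisted wreath product lemma (cf.~\cite[Lemma 13.7.4]{FriedJarden}), the tower $K\subseteq E_0\subseteq E\subseteq F\subseteq\hat F$ realizes $A\wr_{G_0}G$ with $G=\Gal(E/K)$ and $G_0=\Gal(E/E_0)$; moreover $\hat F\subseteq N$ by the choice of $\tau$, and $A\neq 1$ precisely because $f(\tau,X)$ is reducible over $M$ yet irreducible over $K(\alpha)$. This realization contradicts the hypothesis, forcing $M$ to be Hilbertian.

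The main obstacle is orchestrating a single Hilbertian specialization $\tau\in K$ that simultaneously matches the three structural constraints listed above: compatibility of the arithmetic extension $E_\infty\subseteq E$, of the geometric Galois group $A$, and of the factorization of $f(\tau,X)$ over $M$ is exactly what packages $\Gal(\hat F/K)$ into a twisted wreath product with nontrivial kernel. Verifying that Hilbert's irreducibility can be applied to preserve all three simultaneously, and that the resulting factorization over $M$ identifies $G_0$ as a stabilizer so that the wreath structure is genuine, is the technical heart of the argument.
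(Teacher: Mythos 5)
Note first that the paper does \emph{not} prove Theorem~\ref{thm:haran}: it is quoted verbatim from Haran's article \cite{HaranDiamond}, so there is no in-paper argument to compare against. Your high-level plan (prove the contrapositive and extract a twisted-wreath-product realization inside $N$ from a failure of Hilbertianity) is the right one and is in the spirit of Haran's work, so I will assess the details on their own merits.

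There is a genuine gap, and it is concentrated in the construction of $\hat F$. You take $\hat F$ to be the splitting field of $f(\tau,X)$ over $K$ and claim $\hat F\subseteq N$ ``by the choice of $\tau$,'' but nothing in a Hilbertian specialization over $K$ forces the roots of $f(\tau,X)$ into $N$: here $N$ is merely \emph{some} Galois extension of $K$ containing $ME$, not $M_s$, and reducibility of $f(\tau,X)$ over $M$ only puts a factor, not roots, inside $M$. To make this work one must use the sharper reformulation of non-Hilbertianity of $M$ — that one may take $f$ so that every $f(\tau,X)$, $\tau\in M$, has an actual root $\gamma_\tau\in M$ — and then build $F=E(\gamma_\tau)$ and take $\hat F$ to be the Galois closure of $F/K$; this lies in $N$ because $F\subseteq ME\subseteq N$ and $N/K$ is Galois. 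Your $\hat F$ also cannot carry the tower structure: a realization of $A\wr_{G_0}G$ requires $[\hat F:K]=|A|^{[G:G_0]}\,|G|$, which bears no relation to the degree of the splitting field of $f(\tau,X)$, and indeed requires $E\subseteq\hat F$, which again is not guaranteed. Finally, your argument that $A\neq 1$ conflates the geometric monodromy group $\Gal(F_0/E_\infty(t))$ (a fixed invariant of $f$) with the kernel of the wreath-product realization; the nontriviality of the latter must be extracted from the existence of $\gamma_\tau\in M\setminus E_0$ together with the irreducibility of $f(\tau,X)$ over $E_0$, via the twisted-wreath-product lemma \cite[Lemma 13.7.4]{FriedJarden}, which you invoke but do not verify applies. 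In short: the proposal is an accurate \emph{plan}, but the specific choice of $\hat F$ would fail, and the steps you flag as ``the technical heart'' are precisely where the proof lives.
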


We say that a separable algebraic extension $M/K$ is \textbf{of finite abelian-simple length} 
if $\Gal(L/K)$ is, where $L$ denotes the Galois closure of $M/K$.

\begin{thm}\label{thm:bdgdlimplieshilb}
Let $M$ be a separable algebraic extension of a Hilbertian field $K$ of finite abelian-simple length. Then $M$ is Hilbertian.
\end{thm}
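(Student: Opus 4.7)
My plan is to verify Haran's criterion (Theorem~\ref{thm:haran}) for $M/K$. Let $L$ be the Galois closure of $M/K$ in $M_s$, so $\ell := l(\Gal(L/K)) < \infty$ by hypothesis. Given $\alpha \in M$ and $\beta \in M_s$, I would construct $E, E_0, N$ as follows.

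First, take $K'$ to be the Galois closure of $K(\beta)/K$ (a finite Galois extension) and set $N := L K'$. Then $N$ is a Galois extension of $K$ containing both $M$ and $\beta$, and $H := \Gal(N/K)$ embeds into a fiber product $\Gal(L/K) \times_{\Gal(L \cap K'/K)} \Gal(K'/K)$. By Lemma~\ref{lem:basicpropertiesofmSigma}(\ref{cond4:basicpropertiesofmSigma}) and Lemma~\ref{lem:log_length}, the length $n := l(H)$ is finite.

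Second, I would pick a finite Galois extension $E \subseteq N$ of $K$ containing $\beta$ and a subfield $E_0 \subseteq M \cap E$ containing $\alpha$ so that the key inequality
\[
[G^{(n-1)} G_0 : G_0] \;>\; 2^{n-1}
\]
holds, where $G := \Gal(E/K)$ and $G_0 := \Gal(E/E_0)$. A calculation based on Lemma~\ref{lem:basicpropertiesofmSigma}(\ref{cond1:basicpropertiesofmSigma}) shows that this index equals $[H^{(n-1)} H_0 : H_0]$, where $H_0 := \Gal(N/E_0) \supseteq \Gal(N/M)$; so the task reduces to a choice of $E_0$ inside $M$ (and then any sufficiently large finite Galois $E \subseteq N$ containing $E_0$, $\alpha$, $\beta$). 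The core-freeness of $\Gal(L/M)$ inside $\Gal(L/K)$ guarantees that the nontrivial normal subgroup $\Gal(L/K)^{(\ell-1)}$ is not contained in $\Gal(L/M)$, so $H^{(\ell-1)}$ acts nontrivially on $M$, and by taking $E_0 = M\cap E$ with $E$ large enough I would drive the orbit of $H^{(n-1)}$ on $H/H_0$ past the threshold $2^{n-1}$.

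Finally, assume for contradiction that a realization $K \subseteq E_0 \subseteq E \subseteq F \subseteq \hat F$ of $A \wr_{G_0} G$ lies inside $N$, for some nontrivial finite $A$ and some action of $G_0$. Then $A \wr_{G_0} G$ is a quotient of $H$, so Lemma~\ref{lem:basicpropertiesofmSigma}(\ref{cond1:basicpropertiesofmSigma}) gives $l(A \wr_{G_0} G) \leq n$. On the other hand, Proposition~\ref{prop:m-sigmalength} with $m = n-1$ yields $l(A \wr_{G_0} G) \geq n+1$, contradicting the previous bound. Hence no such realization exists, Haran's criterion is satisfied, and $M$ is Hilbertian. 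The main obstacle I anticipate is the second step: when $n > \ell$ one has $H^{(n-1)} \subseteq \Gal(N/L) \subseteq H_0$ and the index collapses to $1$, so a careful refinement of $N$ (or alternatively an induction on $\ell$ interlocked with the choices of $E$ and $N$) is needed to force the situation $n = \ell$ and to amplify the nontrivial action of $H^{(\ell-1)}$ on $M$ into an orbit of size exceeding $2^{\ell-1}$.
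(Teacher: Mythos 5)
Your overall strategy---verify Haran's criterion (Theorem~\ref{thm:haran}) and use Proposition~\ref{prop:m-sigmalength} to rule out realizations of twisted wreath products---is the same as the paper's, but there are two gaps, one of which you do not notice and which is fatal as the argument stands.

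\textbf{The gap you miss.} You assert that by ``taking $E_0 = M\cap E$ with $E$ large enough'' you can drive the orbit of $H^{(n-1)}$ on $H/H_0$ past the threshold $2^{n-1}$, and invoke core-freeness of $\Gal(L/M)$ in $\Gamma := \Gal(L/K)$. Core-freeness only yields $\Gamma^{(\ell-1)}\not\subseteq\Gal(L/M)$, hence $[\Gamma_0\Gamma^{(\ell-1)}:\Gamma_0]\geq 2$ where $\Gamma_0 = \Gal(L/M)$. But this index can be any finite number, possibly $\leq 2^{\ell-1}$, in which case no choice of $E$ will ever make the corresponding index in a finite quotient exceed $2^{\ell-1}$: these indices are all bounded by $[\Gamma_0\Gamma^{(\ell-1)}:\Gamma_0]$. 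The paper deals with this possibility by a preliminary dichotomy: if $[\Gamma_0\Gamma^{(m)}:\Gamma_0]<\infty$ (with $m=\ell-1$), then $M$ is a \emph{finite} extension of $U := M\cap L^{(m)}$, and $U$ has finite abelian-simple length $\leq m$ over $K$; one then concludes by induction on $m$ plus the fact that finite extensions of Hilbertian fields are Hilbertian. Only when the index is infinite does the orbit-amplification step work. Your argument omits this reduction entirely, and it is essential.

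\textbf{The gap you notice but do not repair.} You work with $n := l(\Gal(N/K))$, and need to apply Proposition~\ref{prop:m-sigmalength} with $m=n-1$; but as you observe, when $n>\ell$ one has $H^{(n-1)}\subseteq\Gal(N/L)\subseteq H_0$ (because $E_0\subseteq M\subseteq L$), so $[G^{(n-1)}G_0:G_0]=1$ and the hypothesis fails. There is no obvious way to ``force $n=\ell$'': $N$ must contain the arbitrarily chosen $\beta\in M_s$ and be Galois over $K$, and the extension $K'/K$ can contribute arbitrary length. The paper sidesteps this entirely: it applies Proposition~\ref{prop:m-sigmalength} with $m=\ell-1$ (\emph{not} with $l(\Gal(N/K))-1$), and replaces your length-counting contradiction (``$l(A\wr_{G_0}G)\leq n$ versus $\geq n+1$'') by a direct lifting argument. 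Namely, it picks a nontrivial $\tau\in(A\wr_{G_0}G)^{(\ell)}\cap\Ind_{G_0}^G(A)$, lifts it via Lemma~\ref{lem:basicpropertiesofmSigma}.1 to some $T\in\Gal(N/K)^{(\ell)}$, and observes that $T|_L\in\Gal(L/K)^{(\ell)}=1$ while $T|_E=1$ because $\tau\in\Gal(\hat F/E)$; since $N=EL$, this forces $T=1$, hence $\tau=1$, a contradiction. This argument makes no reference to $l(\Gal(N/K))$, so the difficulty you flag simply never arises. You should adopt this lifting argument in place of the two-sided length bound, and add the induction-on-$m$ reduction to handle the finite-index case.
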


\begin{proof}
Let $L$ be the Galois closure of $M/K$. Let $\Gamma = \Gal(L/K)$ and let $\Gamma^{(i)}$, $i=0,1,2,\ldots $ be the generalized derived series of $\Gamma$. 
By assumption there exists a minimal $m\geq0$ such that $\Gamma^{(m+1)}=1$. 
Let $\Gamma_0 = \Gal(L/M)$ and for each $i$ denote by $L^{(i)}$ the fixed field of $\Gamma^{(i)}$ in $L$.

If $[\Gamma_0\Gamma^{(m)} : \Gamma_0]<\infty$, then, by Galois correspondence, $M$ is a finite extension of $U = M\cap L^{(m)}$. Note that if $\hat{U} $ is the Galois closure of $U/K$, then  $\hat{U}\subseteq L^{(m)}$ and thus $\Gal(\hat{U}/K)$ is a quotient of $\Gamma/\Gamma^{(m)}$. Thus $\Gal(\hat{U}/K)^{(m)}$ is a quotient of $(\Gamma/\Gamma^{(m)})^{(m)} = \Gamma^{(m)}/\Gamma^{(m)}= 1$ and therefore trivial (Lemma~\ref{lem:basicpropertiesofmSigma}). Therefore induction on $m$ implies that $U$ is Hilbertian, and hence $M$ is Hilbertian as a finite extension of $U$,
see \cite[Proposition 12.3.3]{FriedJarden}.

Therefore we may assume that  $[\Gamma_0\Gamma^{(m)} : \Gamma_0]=\infty$, i.e.\ $[M:M\cap L^{(m)}] = \infty$.
To prove that $M$ is Hilbertian we apply Theorem~\ref{thm:haran}. Let $\alpha\in M$ and $\beta\in M_s$.
Since $M/M\cap L^{(m)}$ is infinite there exists a finite Galois extension $E/K$ such that $\alpha,\beta\in E$ and $[E\cap M : E\cap M \cap L^{(m)}] > 2^m$.

Let $E_0 = E\cap M$, $G = \Gal(E/K)$, $G_0 = \Gal(E/E_0)$, and let $G^{(i)}$, $i=0,1,2,\ldots $ be the generalized derived series of $G$. Note that $\alpha \in E_0$. 
We also let $N = EL$ and $A$ a nontrivial group on which $G_0$ acts. 
By Theorem~\ref{thm:haran} it suffices to prove that there is no realization $K\subseteq E_0\subseteq  E\subseteq  F\subseteq \hat{F}$ of $A\wr_{G_0} G$ with $\hat{F}\subseteq N$. Assume the contrary and identify $\Gal(\hat{F}/K)$ and $A\wr_{G_0} G$.

Let $\check{E} = E\cap L$, $\check{G}=\Gal(\check{E}/K)$, and let $\phi\colon \Gamma \to \check{G}$ and $\psi \colon G\to \check{G}$ be the corresponding restriction maps. 

\[
\xymatrix@C=40pt{
&L^{(m)}\ar@{-}[r]\ar@/^1pc/@{.}[rr]^{\Gamma^{(m)}}
&L^{(m)}M\ar@{-}[r]
&L\ar@{-}[r]
&N
\\
&L^{(m)}\cap M\ar@{-}[r]\ar@{-}[u]
&M\ar@{-}[u]\ar@{.}[ur]|{\Gamma_0}
\\
K\ar@{-}[r]\ar@/_1pc/@{.}[rrrr]_{G}
&E_0\cap L^{(m)}\ar@{-}[u]\ar@{-}[r]^-{>2^m}
&E_0\ar@{-}[u]\ar@{-}[r]
&\tilde{E}\ar@{-}[uu]\ar@{-}[r]
&E\ar@{-}[uu]
}
\]
By Lemma~\ref{lem:basicpropertiesofmSigma}.1,
\begin{eqnarray*}
\check{G}^{(m)} &=& \phi(\Gamma^{(m)}) \;=\; \Gal(\check{E}/ L^{(m)} \cap \check{E}),\\
\check{G}^{(m)} &=& \psi(G^{(m)}) \;=\; \Gal(\check{E}/ E^{(m)} \cap \check{E}),
\end{eqnarray*}
where $E^{(m)}$ is the fixed field of $G^{(m)}$ in $E$. Thus $E^{(m)}\cap \check{E} = L^{(m)}\cap \check{E}$. Since $E\cap M = \check{E}\cap M$  we have 
\[
E\cap M \cap E^{(m)} = \check{E} \cap M \cap E^{(m)} = M \cap E^{(m)}\cap  \check{E} =  M\cap L^{(m)} \cap \check{E} = \check{E}\cap M \cap L^{(m)}=E\cap M \cap L^{(m)}.
\]
 So
\begin{eqnarray*}
[G^{(m)} G_0 : G_0] &=& [E\cap M : E\cap M \cap E^{(m)}] \\
&=& [E \cap M : E\cap M \cap L^{(m)}] \;>\; 2^m.
\end{eqnarray*}
Proposition~\ref{prop:m-sigmalength} gives that
\begin{equation*}\label{eq:mSigmaisHextb}
(A\wr_{G_0}G)^{(m+1)}\cap \Ind_{G_0}^G(A) \neq 1.
\end{equation*}
Let $\tau \in (A\wr_{G_0}G)^{(m+1)}\cap \Ind_{G_0}^G(A) $ be nontrivial. Lift $\tau$ to $T\in \Gal(N/K)^{(m+1)}$ (Lemma~\ref{lem:basicpropertiesofmSigma}).  But $T|_{L} \in \Gal(L/K)^{(m+1)}=1$ by the same lemma. Since $\tau\in \Ind_{G_0}^G(A) = \Gal(\hat{F}/E)$, it follows that $T|_{E}=1$. But then $T = 1$, so $\tau=1$. 
From this contradiction we get by Theorem~\ref{thm:haran} that $M$ is Hilbertian.
\end{proof}

Following \cite{FehmPetersen} we call a field extension $E/K$ an \textbf{$\mathcal{H}$-extension} if every intermediate field
$K\subseteq M\subseteq E$ is Hilbertian.

\begin{cor}\label{cor:hextension}
Let $K$ be Hilbertian and $E/K$ a Galois extension of finite abelian-simple length. Then $E/K$ is an $\mathcal{H}$-extension.
\end{cor}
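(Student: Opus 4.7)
The plan is to reduce Corollary~\ref{cor:hextension} directly to Theorem~\ref{thm:bdgdlimplieshilb}. Fix an intermediate field $K\subseteq M\subseteq E$. To invoke Theorem~\ref{thm:bdgdlimplieshilb} I need to check two things: that $M/K$ is separable, and that $M/K$ is of finite abelian-simple length in the sense defined just before Theorem~\ref{thm:bdgdlimplieshilb}. Separability is free, since $E/K$ is Galois and hence separable, so every intermediate extension is separable.

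For the length condition, let $L\subseteq E$ denote the Galois closure of $M/K$. Since $E/K$ is Galois and contains $M$, we indeed have $L\subseteq E$, and therefore $\Gal(L/K)$ is a quotient of $\Gamma=\Gal(E/K)$ via the restriction map. By hypothesis $\Gamma$ has finite abelian-simple length. Lemma~\ref{lem:basicpropertiesofmSigma}.\ref{cond1:basicpropertiesofmSigma} then gives
\[
l(\Gal(L/K)) \;\leq\; l(\Gamma) \;<\; \infty,
\]
so by definition $M/K$ is of finite abelian-simple length.

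Now Theorem~\ref{thm:bdgdlimplieshilb}, applied to the Hilbertian field $K$ and the separable algebraic extension $M/K$ of finite abelian-simple length, immediately yields that $M$ is Hilbertian. Since $M$ was an arbitrary intermediate field, $E/K$ is an $\mathcal{H}$-extension. There is essentially no obstacle here; the statement is a packaging of Theorem~\ref{thm:bdgdlimplieshilb} together with the observation that the finite-abelian-simple-length property descends to Galois closures of intermediate fields, which in turn is exactly Lemma~\ref{lem:basicpropertiesofmSigma}.\ref{cond1:basicpropertiesofmSigma}.
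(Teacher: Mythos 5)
Your proof is correct and follows exactly the same route as the paper's: both reduce to Theorem~\ref{thm:bdgdlimplieshilb} by noting that the Galois closure of $M/K$ sits inside $E$, so $\Gal(\hat{M}/K)$ is a quotient of $\Gal(E/K)$, and then invoke Lemma~\ref{lem:basicpropertiesofmSigma}.\ref{cond1:basicpropertiesofmSigma} to see that the abelian-simple length stays finite. The only (minor) addition you make is spelling out the separability of $M/K$, which the paper leaves implicit.
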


\begin{proof}
Let $K\subseteq M\subseteq E$ be an intermediate field and let $\hat{M}$ be the Galois closure of $M/K$. Then $\Gal(E/K)$ surjects onto $\Gal(\hat{M}/K)$. Thus $M/K$ is of finite abelian-simple length (Lemma \ref{lem:basicpropertiesofmSigma}), and by Theorem~\ref{thm:bdgdlimplieshilb} it follows that $M$ is Hilbertian.
\end{proof}

Theorem \ref{thm:criteria} from the introduction is now an immediate consequence
of Lemma \ref{lem:subnormal} and Corollary \ref{cor:hextension}.

\section{Galois representations}
\label{sec:rep}

\subsection{Compact subgroups of ${\rm GL}_n$}

In this section we summarize a few well-known facts about compact subgroups of ${\rm GL}_n(\overline{\mathbb{Q}}_\ell)$.
For a finite extension $F$ of $\mathbb{Q}_\ell$ we denote by $\mathcal{O}_F$ the integral closure of $\mathbb{Z}_\ell$ in $F$
and by $\mathfrak{m}_F$ the maximal ideal of $\mathcal{O}_F$.

\begin{lem}\label{lem:GL1}
Every compact subgroup of $\GL_n(\overline{\mathbb{Q}}_\ell)$ is contained in $\GL_n(F)$ for a finite extension $F$ of $\mathbb{Q}_\ell$.
\end{lem}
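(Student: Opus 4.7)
The plan is to express $\GL_n(\overline{\mathbb{Q}}_\ell)$ as a countable ascending union of closed subgroups of the form $\GL_n(F_d)$ with $F_d/\mathbb{Q}_\ell$ finite, and then to apply the Baire category theorem to the compact subgroup $H$. Concretely, for each $d\geq 1$ I would let $F_d$ be the compositum inside $\overline{\mathbb{Q}}_\ell$ of all subextensions of $\mathbb{Q}_\ell$ of degree at most $d$. A standard consequence of Krasner's lemma is that $\mathbb{Q}_\ell$ has only finitely many extensions of any given degree, so each $F_d$ is finite over $\mathbb{Q}_\ell$; and since every element of $\overline{\mathbb{Q}}_\ell$ is algebraic of some finite degree, $\overline{\mathbb{Q}}_\ell=\bigcup_{d\geq 1}F_d$.

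Next I would equip $\GL_n(\overline{\mathbb{Q}}_\ell)$ with the subspace topology coming from its embedding into $\GL_n(\mathbb{C}_\ell)$, where $\mathbb{C}_\ell$ is the completion of $\overline{\mathbb{Q}}_\ell$ with respect to its unique absolute value extending the one on $\mathbb{Q}_\ell$. Because each $F_d$ is finite over $\mathbb{Q}_\ell$ it is complete, hence closed in $\mathbb{C}_\ell$, and therefore $\GL_n(F_d)$ is closed in $\GL_n(\overline{\mathbb{Q}}_\ell)$. For a compact subgroup $H\subseteq\GL_n(\overline{\mathbb{Q}}_\ell)$, setting $H_d:=H\cap\GL_n(F_d)$ then realizes $H$ as the countable ascending union $H=\bigcup_{d}H_d$ of closed subgroups.

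Since $H$ is compact Hausdorff, it is a Baire space, so at least one $H_d$ has nonempty interior in $H$. A subgroup with nonempty interior is open, and an open subgroup of a compact group has finite index; let $h_1,\dots,h_r\in H$ be coset representatives for $H_d$ in $H$. Each $h_i$ has entries in some finite extension $F_{d_i}$, so taking the compositum $F:=F_d\cdot F_{d_1}\cdots F_{d_r}$, which is still a finite extension of $\mathbb{Q}_\ell$, yields $H\subseteq\bigcup_i h_i\,\GL_n(F_d)\subseteq\GL_n(F)$, as required.

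The only real subtlety, I think, is fixing the correct topology on $\GL_n(\overline{\mathbb{Q}}_\ell)$: one needs the absolute-value topology inherited from $\mathbb{C}_\ell$ so that finite subextensions $F_d$ sit as closed subfields, which is what makes both the countable closed cover and the Baire argument valid. Once that is in place, everything else is standard bookkeeping.
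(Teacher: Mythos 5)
Your proof is correct, and it is the standard Baire-category argument for this fact: write $\overline{\mathbb{Q}}_\ell$ as a countable increasing union of finite complete subextensions, observe that each $\GL_n(F_d)$ is closed, apply Baire to the compact group $H$ to get an open (hence finite-index) $H\cap\GL_n(F_d)$, and absorb finitely many coset representatives into a slightly larger finite extension. The paper itself gives no proof but simply cites Skinner, whose argument is the same; you have correctly identified the one genuine subtlety, namely that the topology must be the $\ell$-adic one coming from $\mathbb{C}_\ell$ so that the $F_d$ are closed subfields.
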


\begin{proof}
See for example \cite[p.~244]{Skinner}. 
\end{proof}

\begin{lem}\label{lem:GL2}
A compact subgroup of $\GL_n(F)$, where $F$ is a finite extension of $\mathbb{Q}_\ell$, is conjugate to a subgroup of 
$\GL_n(\mathcal{O}_F)$.
\end{lem}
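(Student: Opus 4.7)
The plan is to mimic the classical argument that any compact subgroup of $\GL_n(\mathbb{Q}_\ell)$ stabilizes some $\mathbb{Z}_\ell$-lattice, adapted to the finite extension $F/\mathbb{Q}_\ell$. Let $H\leq\GL_n(F)$ be compact, let $\pi$ be a uniformizer of $\mathcal{O}_F$, and let $L_0=\mathcal{O}_F^n\subset F^n$ be the standard lattice. The goal is to produce an $H$-stable $\mathcal{O}_F$-lattice $L\subset F^n$; once we have this, freeness of $L$ over the PID $\mathcal{O}_F$ gives an $\mathcal{O}_F$-basis of $L$, and the change-of-basis matrix $g\in\GL_n(F)$ between the standard basis of $F^n$ and this basis conjugates $H$ into $\GL_n(\mathcal{O}_F)$.

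To construct $L$, I would first use compactness of $H$ to bound the orbit of $L_0$. Viewing $\GL_n(F)\subseteq M_n(F)\cong F^{n^2}$ with the $\pi$-adic topology, the compact set $H\cup H^{-1}$ has bounded entries, so there is an integer $k\geq 0$ with $h L_0\subseteq\pi^{-k}L_0$ and $h^{-1}L_0\subseteq\pi^{-k}L_0$ for all $h\in H$; the second inclusion rewrites as $hL_0\supseteq\pi^kL_0$. Hence every lattice in the orbit $\{hL_0:h\in H\}$ lies between $\pi^k L_0$ and $\pi^{-k}L_0$. The $\mathcal{O}_F$-lattices in this range correspond bijectively to $\mathcal{O}_F$-submodules of the finite module $\pi^{-k}L_0/\pi^kL_0$, of which there are only finitely many. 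Thus the orbit of $L_0$ under $H$ is finite.

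Set $L=\sum_{h\in H}hL_0$. By the previous step this is a \emph{finite} sum of lattices inside $\pi^{-k}L_0$, hence itself a finitely generated $\mathcal{O}_F$-submodule of $F^n$ containing $L_0$; it is therefore a lattice (a free $\mathcal{O}_F$-module of rank $n$, by the structure theorem over the PID $\mathcal{O}_F$). By construction $L$ is $H$-invariant: for any $h'\in H$ we have $h'L=\sum_{h\in H}(h'h)L_0=L$ since left multiplication by $h'$ permutes $H$. Choosing an $\mathcal{O}_F$-basis of $L$ and letting $g\in\GL_n(F)$ denote the matrix sending the standard basis of $F^n$ to this basis, the $H$-action on $L$ in the chosen basis lands in $\GL_n(\mathcal{O}_F)$, so $g^{-1}Hg\subseteq\GL_n(\mathcal{O}_F)$, as desired.

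The only real subtlety is the finiteness of the orbit $\{hL_0:h\in H\}$; everything else is formal. I would expect this step to be the main (and essentially only) obstacle, and it is handled cleanly by combining the compactness bound $\pi^kL_0\subseteq hL_0\subseteq\pi^{-k}L_0$ with the finiteness of the set of $\mathcal{O}_F$-submodules of the finite module $\pi^{-k}L_0/\pi^kL_0$.
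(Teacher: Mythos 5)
Your proof is correct and is exactly the standard lattice-stabilization argument that the paper invokes by reference (Bourbaki's exercise for $F=\mathbb{Q}_\ell$, transported verbatim to $\mathcal{O}_F$). One small streamlining: the finiteness of the orbit $\{hL_0\}$ is not actually needed — once you have $hL_0\subseteq\pi^{-k}L_0$ for all $h$, the sum $L=\sum_{h\in H}hL_0$ is an $\mathcal{O}_F$-submodule of the finitely generated module $\pi^{-k}L_0$ over the Noetherian ring $\mathcal{O}_F$, hence finitely generated, torsion-free, and of full rank, so it is a lattice regardless of whether the orbit is finite.
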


\begin{proof}
This can be proven exactly like the special case $F=\mathbb{Q}_\ell$ explained in
\cite[\S 6 Exercise 5a, p.~392]{Bourbaki}.
\end{proof}

\begin{lem}\label{lem:GLnQl}
Every compact subgroup of $\GL_n(\mathcal{O}_F)$, where $F$ is a finite extension of
$\mathbb{Q}_\ell$, is a finitely generated profinite group. 
\end{lem}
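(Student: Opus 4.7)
The plan is to exploit the $\ell$-adic analytic structure of $\GL_n(\mathcal{O}_F)$ via its principal congruence filtration. For each integer $k\geq 1$, set
\[
U_k \;=\; \ker\bigl(\GL_n(\mathcal{O}_F)\longrightarrow\GL_n(\mathcal{O}_F/\ell^k\mathcal{O}_F)\bigr) \;=\; 1+\ell^k M_n(\mathcal{O}_F).
\]
Since $\mathcal{O}_F/\ell^k\mathcal{O}_F$ is finite, each $U_k$ is an open subgroup of finite index in $\GL_n(\mathcal{O}_F)$, and the $U_k$ form a fundamental system of open neighbourhoods of $1$.

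First, I would verify that for $k$ large enough the group $U_k$ is a uniformly powerful pro-$\ell$ group in the sense of Dixon--du Sautoy--Mann--Segal. The $\ell$-adic logarithm converges on $U_k$ and defines a homeomorphism onto the additive group $\ell^k M_n(\mathcal{O}_F)$, which is finitely generated free over $\ZZ_\ell$; an explicit check of commutator and $\ell$-th power relations through the Baker--Campbell--Hausdorff formula yields the uniformity conditions.

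Next, I would invoke the standard structural fact (see Dixon--du Sautoy--Mann--Segal, \emph{Analytic pro-$p$ groups}) that every closed subgroup of a uniform pro-$\ell$ group is itself uniform, and in particular topologically finitely generated.

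The conclusion is then formal: if $H\leq\GL_n(\mathcal{O}_F)$ is compact, hence closed, then $H\cap U_k$ is a closed subgroup of $U_k$ and therefore topologically finitely generated. The quotient $H/(H\cap U_k)$ embeds into the finite group $\GL_n(\mathcal{O}_F)/U_k$, so a finite set of coset representatives together with a finite topological generating set of $H\cap U_k$ generates $H$ topologically. The main technical obstacle is the first step, establishing the uniform/powerful structure on $U_k$ via the convergence of the $\ell$-adic exponential and logarithm; once this and the descent to closed subgroups are in hand, the rest of the argument is routine.
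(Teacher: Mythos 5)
Your argument is correct in substance and reaches the right conclusion, but it takes a genuinely different route from the paper. The paper's proof is a two-line reduction: view $\mathcal{O}_F$ as a free $\ZZ_\ell$-module of rank $m=[F:\QQ_\ell]$, use the regular representation to embed $\GL_n(\mathcal{O}_F)$ into $\GL_{mn}(\ZZ_\ell)$, and then cite the fact (Fried--Jarden, Prop.~22.14.4) that closed subgroups of $\GL_{mn}(\ZZ_\ell)$ are topologically finitely generated. You instead work directly over $\mathcal{O}_F$, building the congruence filtration and invoking the machinery of uniform pro-$\ell$ groups from Dixon--du Sautoy--Mann--Segal. What the paper's approach buys is brevity and minimal technology, at the cost of shifting all the work into a black-box citation; your approach is more self-contained and actually explains \emph{why} the result holds, via the $\ell$-adic analytic structure, which is presumably what underlies the cited result anyway. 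Both are legitimate; neither is clearly preferable in a paper of this kind, and the authors reasonably chose the shorter path.

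One small but real inaccuracy: it is \emph{not} true that every closed subgroup of a uniform pro-$\ell$ group is itself uniform. (For instance, closed subgroups may fail to be powerful.) The correct statement, which is what you actually need, is that every closed subgroup of a uniform (or even just powerful and finitely generated) pro-$\ell$ group of dimension $d$ is topologically generated by at most $d$ elements --- see DDMS, Theorem 3.8. Replacing ``is itself uniform, and in particular topologically finitely generated'' by a direct appeal to that theorem fixes the misattribution without changing your argument. The remaining steps --- convergence of $\log$ on $U_k$ for $k$ large enough (for $\ell$ odd, $k\geq 1$ suffices; for $\ell=2$, $k\geq 2$), the identification of $U_k$ as a $\ZZ_\ell$-lattice under $\log$, and the final extension of a finite generating set of $H\cap U_k$ by coset representatives of the finite quotient $H/(H\cap U_k)$ --- are all sound.
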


\begin{proof}
Let $m=[F:\mathbb{Q}_\ell]$. Then $\mathcal{O}_F$ is a free $\mathbb{Z}_\ell$-module of rank $m$.
The regular representation of $\mathcal{O}_F$ as a $\mathbb{Z}_\ell$-module identifies the given compact subgroup of $\GL_n(\mathcal{O}_F)$
with a closed subgroup of $\GL_{mn}(\mathbb{Z}_\ell)$,
and every such subgroup is finitely generated, see \cite[Proposition 22.14.4]{FriedJarden}.
\end{proof}

\begin{lem}\label{lem:kernel}
If $F$ is a finite extension of $\mathbb{Q}_\ell$,
then the kernel $N$ of the residue map $\GL_n(\mathcal{O}_F)\to \GL_n(\mathcal{O}_F/\mathfrak{m}_F)$ is pro-$\ell$.
\end{lem}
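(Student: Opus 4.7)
My plan is to exhibit $N$ explicitly as an inverse limit of finite $\ell$-groups via the standard congruence filtration. Concretely, for each $k \geq 1$ let
\[
N_k = \ker\bigl(\GL_n(\mathcal{O}_F) \to \GL_n(\mathcal{O}_F/\mathfrak{m}_F^k)\bigr) = \{I + A : A \in M_n(\mathfrak{m}_F^k)\},
\]
where invertibility is automatic since the determinant lies in $1 + \mathfrak{m}_F \subseteq \mathcal{O}_F^\times$. Note that $N = N_1$.

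First I would check that each $N_k$ is a normal subgroup of $N$ and that the $N_k$ form a descending filtration with $\bigcap_k N_k = 1$; the latter uses that $\mathcal{O}_F$ is a discrete valuation ring so $\bigcap_k \mathfrak{m}_F^k = 0$. Next, the identity $(I+A)(I+B) = I + A + B + AB$ shows that for $A, B \in M_n(\mathfrak{m}_F^k)$ we have $(I+A)(I+B) \equiv I + (A+B) \pmod{\mathfrak{m}_F^{k+1}}$, so the map $I + A \mapsto A + M_n(\mathfrak{m}_F^{k+1})$ induces a group isomorphism
\[
N_k / N_{k+1} \;\cong\; M_n(\mathfrak{m}_F^k/\mathfrak{m}_F^{k+1}) \;\cong\; M_n(\mathcal{O}_F/\mathfrak{m}_F),
\]
which is an $\mathbb{F}_q$-vector space with $q = |\mathcal{O}_F/\mathfrak{m}_F|$ a power of $\ell$. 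In particular each $N_k/N_{k+1}$ is an elementary abelian $\ell$-group.

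By induction each quotient $N/N_k$ is then a finite $\ell$-group, being an iterated extension of finite $\ell$-groups. Since the residue map is continuous and $N$ is closed in the profinite group $\GL_n(\mathcal{O}_F)$, the subgroups $N_k$ are open in $N$ and $N = \varprojlim_k N/N_k$ as a profinite group. An inverse limit of finite $\ell$-groups is pro-$\ell$, which gives the claim. No step here is really an obstacle; the only thing to be slightly careful about is the normality and continuity assertions and the compatibility of the profinite topology on $N$ with the congruence filtration, both of which are routine.
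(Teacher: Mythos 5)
Your proof is correct and takes essentially the same approach as the paper: the paper also exhibits $N$ as $\varprojlim_k\bigl(I_n+\lambda\,\mathrm{Mat}_n(\mathcal{O}_F/\lambda^k)\bigr)$ via the congruence filtration and observes each finite quotient is an $\ell$-group. You simply spell out the graded pieces $N_k/N_{k+1}\cong M_n(\mathcal{O}_F/\mathfrak{m}_F)$ a bit more explicitly.
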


\begin{proof}
For the special case $F=\mathbb{Q}_\ell$, $\ell>2$ see for example \cite[Theorem 5.2]{Dixon}.
For a direct proof of the general case observe that if $\mathfrak{m}_F=\lambda \mathcal{O}_F$, then
$N=\varprojlim_k (I_n + \lambda{\rm Mat}_n(\mathcal{O}_F/\lambda^k))$,
and $I_n + \lambda{\rm Mat}_n(\mathcal{O}_F/\lambda^k)$ is an $\ell$-group.
\end{proof}

\subsection{A consequence of a theorem of Larsen and Pink}
A straightforward application of the following theorem of Larsen-Pink 
allows us to conclude that a compact subgroup of ${\rm GL}_n(\overline{\mathbb{Q}}_\ell)$ is an extension of a profinite group of finite abelian-simple length by a pro-$\ell$ group.

\begin{thm}[Larsen-Pink {\cite{LarsenPink}}]\label{thm:LarsenPink}
For any $n$ there exists a constant  $J(n)$ depending only on $n$ such that any finite  subgroup $\Lambda$ of $\GL_n$ over any field $k$ possesses normal subgroups $\Lambda_3\leq \Lambda_2\leq \Lambda_1$ such that
\begin{enumerate}
\item $[\Lambda:\Lambda_1]\leq J(n)$.
\item Either $\Lambda_1=\Lambda_2$, or $\ell:= {\rm char}(k)$ is positive and $\Lambda_1/\Lambda_2$ is a direct product of finite simple groups of Lie type in characteristic $\ell$.
\item $\Lambda_2/\Lambda_3$ is abelian of order not divisible by ${\rm char}(k)$.
\item Either $\Lambda_3=1$, or $\ell:= {\rm char}(k)$ is positive and $\Lambda_3$ is an $\ell$-group.
\end{enumerate}
\end{thm}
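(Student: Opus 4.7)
The plan is to use the structure theory of linear algebraic groups. Let $\bar{k}$ be an algebraic closure of $k$ and let $H \subseteq \GL_{n,\bar{k}}$ denote the Zariski closure of $\Lambda$. In characteristic zero, the theorem is essentially the classical Jordan theorem: every finite subgroup of $\GL_n(\bar{k})$ contains an abelian normal subgroup of index bounded by a constant $J(n)$ depending only on $n$. This immediately yields the conclusion with $\Lambda_3=1$ and $\Lambda_2=\Lambda_1$ abelian. Henceforth I would assume $\mathrm{char}(k)=\ell>0$.

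In characteristic $\ell$, the strategy is to produce $\Lambda_1$ as a bounded-index normal subgroup of $\Lambda$ whose Zariski closure is ``well-behaved,'' and then read off the tower $\Lambda_3 \leq \Lambda_2 \leq \Lambda_1$ from the Levi-style decomposition of a connected linear algebraic group. The connected group $H^0$ has a unipotent radical $U = R_u(H^0)$, a solvable radical $R = R(H^0)$, and a semisimple quotient $H^0/R$ that is isogenous to a product of simple algebraic groups. The $\bar{k}$-points of a unipotent algebraic group in characteristic $\ell$ have only $\ell$-power torsion, so $\Lambda_3 := \Lambda_1 \cap U(\bar{k})$ is automatically an $\ell$-group, giving item (4). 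A maximal torus has only abelian finite subgroups of order prime to $\ell$, and $R/U$ is such a torus, so letting $\Lambda_2$ be the preimage of $(R/U)(\bar{k})$ in $\Lambda_1$ yields item (3). Finally $\Lambda_1/\Lambda_2$ embeds in $(H^0/R)(\bar{k})$, a product of simple algebraic groups whose finite subgroups are (up to bounded central and outer-automorphism corrections) direct products of finite simple groups of Lie type, giving item (2).

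The main obstacle is producing $\Lambda_1$ with a uniform index bound $J(n)$ and extracting a genuine \emph{direct product} of finite simple groups of Lie type as the quotient $\Lambda_1/\Lambda_2$. The naive choice $\Lambda_1 = \Lambda \cap H^0(\bar{k})$ does not suffice, because the component group $H/H^0$ can be arbitrarily large even for $n=1$ (a cyclic subgroup of $\GL_1$ in characteristic coprime to its order has Zariski closure $\mu_N$, with $N$ unbounded). Larsen and Pink overcome this by a delicate intrinsic analysis of commuting semisimple elements in $\Lambda$ and of invariant subgroups of maximal tori, and crucially without invoking the classification of finite simple groups. For the applications in this paper it is legitimate to import their theorem as a black box; the sketch above only indicates where each of the four pieces of the conclusion comes from structurally.
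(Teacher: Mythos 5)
The paper does not prove this theorem; it is quoted verbatim from Larsen and Pink \cite{LarsenPink} as an external black box, which is exactly what you conclude should be done. Your structural sketch via the Zariski closure, unipotent radical, torus, and semisimple quotient is a sensible heuristic for where the four layers of the conclusion come from, and you correctly flag the real obstruction (the unbounded component group, already visible for $\GL_1$) that makes the naive reduction to $H^0$ fail and forces Larsen--Pink's much more intricate argument.
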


\begin{cor}\label{cor:subgrp}
For any $n$ there exists $m=m(n)$ depending only on $n$ such that every compact subgroup $\Lambda$ of $\GL_n(\mathcal{O}_F)$,
where $F$ is a finite extension of $\mathbb{Q}_\ell$, for some $\ell$, admits a pro-$\ell$ normal subgroup  $N$ such that the abelian-simple length of $\Lambda/N$ is at most $m$.
\end{cor}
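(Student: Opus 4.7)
The plan is to combine the pro-$\ell$ kernel of the residue reduction with the Larsen--Pink structure theorem applied to the finite image, and then translate the resulting normal series into a bound on the abelian-simple length using the tools of Section~\ref{sec:gds}.

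First, I would let $r:\GL_n(\mathcal{O}_F)\to\GL_n(\mathcal{O}_F/\mathfrak{m}_F)$ denote the residue reduction and set $N_0=\Lambda\cap\ker r$. By Lemma~\ref{lem:kernel}, $\ker r$ is pro-$\ell$, hence so is $N_0$. Since the residue field $\mathcal{O}_F/\mathfrak{m}_F$ is finite of characteristic $\ell$, the image $\bar\Lambda:=\Lambda/N_0$ is a finite subgroup of $\GL_n$ over a finite field of characteristic $\ell$, to which Theorem~\ref{thm:LarsenPink} applies. This yields normal subgroups $\Lambda_3\leq\Lambda_2\leq\Lambda_1\leq\bar\Lambda$ such that $[\bar\Lambda:\Lambda_1]\leq J(n)$, such that $\Lambda_1/\Lambda_2$ is a direct product of finite simple groups, such that $\Lambda_2/\Lambda_3$ is abelian, and such that $\Lambda_3$ is a finite $\ell$-group.

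Second, I would take $N$ to be the preimage of $\Lambda_3$ in $\Lambda$ under the projection $\pi:\Lambda\to\bar\Lambda$. Then $N$ is a closed normal subgroup of $\Lambda$, and since $N_0=\ker\pi$ is pro-$\ell$ while $N/N_0\cong\Lambda_3$ is a finite $\ell$-group, $N$ itself is pro-$\ell$. The quotient $\Lambda/N\cong\bar\Lambda/\Lambda_3$ then admits the normal series
\[
1\leq\Lambda_2/\Lambda_3\leq\Lambda_1/\Lambda_3\leq\bar\Lambda/\Lambda_3
\]
whose successive factors are abelian, a direct product of finite simple groups, and a finite group of order at most $J(n)$, respectively.

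Third, I would convert this into an explicit bound on $l(\Lambda/N)$. By Lemma~\ref{lem:log_length}, the top factor $\bar\Lambda/\Lambda_1$ has length at most $\log_2 J(n)$, while the other two factors each have length at most $1$. Applying Proposition~\ref{prop:lengthextension} twice then gives $l(\Lambda/N)\leq 2+\log_2 J(n)$, so it is enough to set $m(n)=\lceil\log_2 J(n)\rceil+2$. The only point requiring any thought is identifying the correct pro-$\ell$ subgroup $N$: neither the residue kernel $N_0$ nor the lifted Larsen--Pink layer $\Lambda_3$ suffices on its own, but their combination as the preimage of $\Lambda_3$ in $\Lambda$ is both pro-$\ell$ and normal in $\Lambda$, and its quotient fits exactly into the abelian-simple framework of Section~\ref{sec:gds}.
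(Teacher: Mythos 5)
Your proof is correct and matches the paper's argument essentially step for step: reduce mod $\mathfrak{m}_F$ to get a pro-$\ell$ kernel, apply Larsen--Pink to the finite quotient, take $N$ to be the preimage in $\Lambda$ of the $\ell$-group layer $\Lambda_3$, and then bound $l(\Lambda/N)$ via Lemma~\ref{lem:log_length} and Proposition~\ref{prop:lengthextension}. The only difference is purely notational (you work inside $\bar\Lambda$ and pull back at the end, whereas the paper names the preimages in $\Lambda$ directly).
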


\begin{proof}
Let $\Lambda_4\leq \Lambda$ be the intersection of $\Lambda$ with the kernel of the residue map $\GL_n(\mathcal{O}_F)\to \GL_n(\mathcal{O}_F/\mathfrak{m}_F)$. Then $\Lambda/\Lambda_4$ is a subgroup of $\GL_n(\mathcal{O}_F/\mathfrak{m}_F)$ and $\Lambda_4$ is a pro-$\ell$ group by Lemma \ref{lem:kernel}. Note that $\mathcal{O}_F/\mathfrak{m}_F$ is a finite field, so $\Lambda/\Lambda_4$ is a finite group.

Theorem~\ref{thm:LarsenPink} applied to $\Lambda/\Lambda_4$ gives normal subgroups $\Lambda_3 \leq  \Lambda_2\leq \Lambda_1$ of $\Lambda$ that contain $\Lambda_4$ such that $[\Lambda:\Lambda_1]\leq J(n)$, $\Lambda_1/\Lambda_2$ is a product of finite simple groups, $\Lambda_2/\Lambda_3$ abelian, and $\Lambda_3/\Lambda_4$ is an $\ell$-group.

Since $\Lambda_4$ is pro-$\ell$ we get that $N := \Lambda_3$ is also pro-$\ell$.
By Lemma \ref{lem:log_length}, the abelian-simple length of $\Lambda/\Lambda_1$ is at most $\log_2(J(n))$. 
Thus by Proposition \ref{prop:lengthextension} the abelian-simple length of $\Lambda/N$ is bounded by $m(n):=\log_2(J(n)) + 2$.
\end{proof}

\subsection{Proof of Theorem~\ref{thm:main}}
\label{sec:prfGR}

The proof of the following proposition appears in \cite[Proposition 2.4]{FehmPetersen}.

\begin{prop}
Let $K_i$, $i\in I$ be a family of $\mathcal{H}$-extensions of a Hilbertian field $K$ which are Galois over $K$. Assume that there is an $\mathcal{H}$-extension $E/K$ such that the fields $K_i E$, $i\in I$, are linearly disjoint over $E$. Then the compositum $\prod_{i\in I} K_i$ is an $\mathcal{H}$-extension of $K$.\label{lem:obs}
\end{prop}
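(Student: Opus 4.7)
The plan is to apply Haran's diamond theorem \cite{HaranDiamond} in its classical form: if $N_1$ and $N_2$ are Galois extensions of a Hilbertian field $K$ and $L$ is an intermediate field with $K\subseteq L\subseteq N_1N_2$ but $L\not\subseteq N_1$ and $L\not\subseteq N_2$, then $L$ is Hilbertian. Writing $F=\prod_{i\in I}K_i$ and $K_J=\prod_{j\in J}K_j$ for each $J\subseteq I$, every $K_J$ is Galois over $K$ as a compositum of Galois extensions, so any partition of $I$ into two nonempty subsets supplies a pair of candidates for the diamond.

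Given an intermediate field $M$ with $K\subseteq M\subseteq F$, I would split into cases according to how $M$ interacts with such partitions. First, if there exists a partition $I=I_1\sqcup I_2$ into nonempty parts with $M\not\subseteq K_{I_1}$ and $M\not\subseteq K_{I_2}$, then since $M\subseteq K_{I_1}K_{I_2}=F$, Haran's diamond concludes immediately that $M$ is Hilbertian. Otherwise, for every such partition $M$ is contained in one of the parts; specializing to $\{i\}\sqcup(I\smallsetminus\{i\})$ for each $i\in I$, this means that either $M\subseteq K_i$ for some $i$ (in which case $M$ is Hilbertian because $K_i/K$ is an $\mathcal{H}$-extension), or else $M\subseteq K_{I\smallsetminus\{i\}}$ for every $i\in I$, and hence $M\subseteq\bigcap_{i\in I}K_{I\smallsetminus\{i\}}$.

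The remaining step, where the hypothesis on $E$ finally enters, is the one I expect to demand the most care. To finish I need to show $\bigcap_{i\in I}K_{I\smallsetminus\{i\}}\subseteq E$, since then $M\subseteq E$ is Hilbertian because $E/K$ is an $\mathcal{H}$-extension. My plan is to pass up to $E$: linear disjointness of the fields $K_iE$ over $E$ yields an identification $\Gal(FE/E)\cong\prod_{i\in I}\Gal(K_iE/E)$, under which $\Gal(FE/K_{I\smallsetminus\{i\}}E)$ is precisely the $i$-th factor; these subgroups jointly generate $\Gal(FE/E)$, whence $\bigcap_{i\in I}(K_{I\smallsetminus\{i\}}E)=E$, and the desired inclusion follows. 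This intersection computation is the only point at which linear disjointness over $E$ rather than over $K$ is essential, which is exactly why the hypothesis of the proposition must involve $E$ at all.
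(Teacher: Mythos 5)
Your proof is correct. Note that the paper does not reprove this proposition but cites \cite{FehmPetersen} (Proposition 2.4) for it; the strategy you give---apply Haran's diamond theorem whenever $M$ escapes both sides of some nontrivial partition of $I$, and otherwise reduce to $M\subseteq\bigcap_{i\in I}K_{I\smallsetminus\{i\}}\subseteq\bigcap_{i\in I}K_{I\smallsetminus\{i\}}E=E$, the last equality coming from the identification $\Gal(FE/E)\cong\prod_{i\in I}\Gal(K_iE/E)$ furnished by linear disjointness over $E$---is exactly the natural route to this statement and is what the cited argument does. Two cosmetic points worth recording for a clean write-up: the dichotomy via singleton partitions implicitly assumes $|I|\geq2$ (for $|I|\leq1$ the conclusion is trivial), and in the intersection computation one should note that the common fixed field of the factors equals the fixed field of the \emph{closed} subgroup they generate, which is all of $\prod_i\Gal(K_iE/E)$ even when the algebraically generated subgroup is only dense.
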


We now prove Theorem~\ref{thm:main}.
Let $K$ be Hilbertian, let $n\in\mathbb{N}$, let $(\rho_\ell)$ be a family of Galois representations of dimension $n$,
and let $L$ be an algebraic extension of $K$ fixed by $\bigcap_\ell \ker\rho_\ell$.
We want to prove that $L$ is Hilbertian.

Since a purely inseparable extension of a Hilbertian field is Hilbertian, see \cite[Proposition 12.3.3]{FriedJarden},
we can assume without loss of generality that $L/K$ is separable.
Thus, if we denote by $K_\ell$ the fixed field of $\ker\rho_\ell$,
$L$ is contained in the compositum $\prod_{\ell} K_\ell$.

By Lemma \ref{lem:GL1} and Lemma \ref{lem:GL2}, we can assume without loss of generality that for each $\ell$, 
$\im(\rho_\ell)\subseteq\GL_n(\mathcal{O}_{F_\ell})$ for a finite extension $F_\ell$ of $\mathbb{Q}_\ell$.
By Lemma~\ref{lem:GLnQl}, the Galois group $\Gal(K_\ell/K) = \im(\rho_\ell)$ is finitely generated, hence $K_\ell/K$ is an $\mathcal{H}$-extension, cf.~\cite[Lemma 4]{Jarden}.

Applying Corollary~\ref{cor:subgrp} gives a constant $m=m(n)$ (depending only on $n$) and,
for each $\ell$, a Galois extension $N_\ell$ of $K$ that is contained in $K_\ell$ such that $\Gal(K_\ell/N_\ell)$ is pro-$\ell$ and the abelian-simple length of $\Gal(N_\ell/K)$ is at most $m$.
Let $E$ be the compositum of all $N_\ell$. 
By Proposition~\ref{prop:lengthN}, the abelian-simple length of $\Gal(E/K)$ is at most $m$.
Thus $E/K$ is an $\mathcal{H}$-extension by Corollary~\ref{cor:hextension}.

Since $\Gal(K_\ell E/E)$ embeds into $\Gal(K_\ell/N_\ell)$ via the restriction map, it is pro-$\ell$. We thus get that the family $K_\ell E$ is linearly disjoint over $E$. 
By Proposition~\ref{lem:obs}, the compositum $\prod_\ell K_\ell$ is an $\mathcal{H}$-extension of $K$, so $L$ is Hilbertian, as claimed.
\hfill\qed

\section{Further applications}
\label{sec:appl}

\subsection{Finite Galois representations}
\label{sec:appl1}

By a {\bf finite $n$-dimensional representation} of $\Gal(K)$ we mean a continuous homomorphism $\rho:\Gal(K)\rightarrow\GL_n(k)$
with finite image, for some field $k$ (equipped with the discrete topology).
In Theorem~\ref{thm:main}, if instead of taking one $n$-dimensional $\ell$-adic Galois representation for each prime number $\ell$
we take finite $n$-dimensional representations, we can actually handle all such representations simultaneously.

\begin{thm}\label{thm:allrep}
Let $K$ be a Hilbertian field and let $n$ be a fixed integer.
Denote by $\Omega$ the family of all finite $n$-dimensional representations of $\Gal(K)$.
Then every algebraic extension $L$ of $K$ fixed by $\bigcap_{\rho\in\Omega}\ker\rho$ is Hilbertian.
\end{thm}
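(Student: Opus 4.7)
The plan is to mimic the proof of Theorem~\ref{thm:main}, with one simplification available in the present setting: since each $\rho\in\Omega$ has \emph{finite} image, the pro-$\ell$ pieces produced by Larsen--Pink are now finite $\ell$-groups inside $\GL_n$, which (unlike the compact pro-$\ell$ subgroups that appear in the $\ell$-adic case) turn out to have uniformly bounded derived length. This will let me bound the abelian-simple length of $\Gal(\prod_\rho K_\rho/K)$ itself and conclude by a single appeal to Corollary~\ref{cor:hextension}, without ever invoking Proposition~\ref{lem:obs}.

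First I reduce to the case $L/K$ separable (purely inseparable extensions of Hilbertian fields are Hilbertian, \cite[Proposition 12.3.3]{FriedJarden}), so that $L\subseteq\prod_{\rho\in\Omega}K_\rho$ where $K_\rho$ denotes the fixed field of $\ker\rho$. Writing $\Lambda_\rho:=\im(\rho)$, Theorem~\ref{thm:LarsenPink} applies directly to the finite subgroup $\Lambda_\rho\subseteq\GL_n(k_\rho)$ --- the residue-map step of Corollary~\ref{cor:subgrp} is unnecessary, because $\Lambda_\rho$ is already finite --- and yields a normal subgroup $\mathcal{N}_\rho\lhd\Lambda_\rho$ with $\Lambda_\rho/\mathcal{N}_\rho$ of abelian-simple length at most $m:=\log_2 J(n)+2$, and with $\mathcal{N}_\rho$ either trivial (if $\mathrm{char}(k_\rho)=0$) or a finite $\ell$-group (if $\mathrm{char}(k_\rho)=\ell>0$).

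The key new point is that every such $\mathcal{N}_\rho$ has derived length at most $c(n):=\lceil\log_2 n\rceil$. Indeed, any finite $\ell$-subgroup of $\GL_n$ in characteristic $\ell$ acts on $\overline{k_\rho}^{\,n}$ with all irreducible constituents trivial (every irreducible representation of an $\ell$-group in characteristic $\ell$ is the trivial one), and is therefore conjugate to a subgroup of the upper unitriangular group $U_n$; a direct computation --- commutators of matrices with zeros on the first $k$ superdiagonals land in the group of matrices with zeros on the first $2k$ superdiagonals --- shows that $U_n$ has derived length $\lceil\log_2 n\rceil$, and the claim follows by passing to a subgroup.

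I now let $N_\rho\subseteq K_\rho$ be the fixed field of $\mathcal{N}_\rho$ and set $E:=\prod_{\rho\in\Omega}N_\rho$. Since each $\Gal(N_\rho/K)\cong\Lambda_\rho/\mathcal{N}_\rho$ has abelian-simple length at most $m$, Proposition~\ref{prop:lengthN} gives $l(\Gal(E/K))\leq m$. On the other hand, the natural restriction map realises $\Gal(\prod_\rho K_\rho/E)$ as a closed subgroup of $\prod_\rho\mathcal{N}_\rho$; since derived length is preserved by subgroups and by arbitrary direct products of profinite groups (commutators decompose coordinatewise, so $(\prod_\rho\mathcal{N}_\rho)^{(k)}=\prod_\rho\mathcal{N}_\rho^{(k)}$), this group is solvable of derived length $\leq c(n)$, and in particular of abelian-simple length $\leq c(n)$. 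Proposition~\ref{prop:lengthextension} then yields $l(\Gal(\prod_\rho K_\rho/K))\leq m+c(n)$, a constant depending only on $n$, and Corollary~\ref{cor:hextension} concludes that $\prod_\rho K_\rho/K$ is an $\mathcal{H}$-extension; in particular $L$ is Hilbertian. The main technical point is the derived-length bound of the third paragraph: without it one would be forced to split $\Omega$ by characteristic and track linear disjointness of pro-$\ell$ extensions for distinct primes through Proposition~\ref{lem:obs} (as in Section~\ref{sec:prfGR}), but the finiteness of each $\im(\rho)$ collapses the argument to a single appeal to Corollary~\ref{cor:hextension}.
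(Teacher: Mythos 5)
Your proof is correct and uses the same key idea as the paper: by Larsen--Pink, each $\Lambda_\rho=\im(\rho)$ sits in a chain $\Lambda_3\leq\Lambda_2\leq\Lambda_1\leq\Lambda_\rho$ whose top three factors have abelian-simple length bounded by $\log_2 J(n)+2$, and $\Lambda_3$ is a finite unipotent subgroup of $\GL_n$, hence conjugate into the unitriangular group $U_n$ and therefore of bounded derived (hence abelian-simple) length; combining these and applying Proposition~\ref{prop:lengthN} and Corollary~\ref{cor:hextension} finishes the argument. The paper's own proof bounds $l(\Lambda_\rho)$ directly (using the cruder but perfectly adequate bound $n-1$ for the derived length of $U_n$, citing Borel) and invokes Proposition~\ref{prop:lengthN} once on the full compositum $M=\prod_\rho K_\rho$, whereas you take the slightly longer route of forming the intermediate compositum $E=\prod_\rho N_\rho$, bounding $l(\Gal(E/K))$ by Proposition~\ref{prop:lengthN}, embedding $\Gal(M/E)$ into $\prod_\rho\mathcal{N}_\rho$ and bounding its derived length there (which requires the extra observation that derived length is inherited by closed subgroups and arbitrary direct products), and then applying Proposition~\ref{prop:lengthextension}. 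This detour is correct but unnecessary; your sharper bound $\lceil\log_2 n\rceil$ on the derived length of $U_n$ (with the superdiagonal-doubling argument) is a genuine, if inessential, improvement of the constant.
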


\begin{proof}
As in the proof of Theorem \ref{thm:main}
we can assume without loss of generality that $L/K$ is separable.
Let $\rho:\Gal(K)\rightarrow\GL_n(k)$ be an element of $\Omega$, let $K_\rho$ be the fixed field of $\ker(\rho)$,
let $\Lambda=\Gal(K_\rho/K)$, and let $\ell={\rm char}(k)$. 
By Theorem \ref{thm:LarsenPink} there exist subgroups $\Lambda_3\leq\Lambda_2\leq\Lambda_1$ of $\Lambda$
such that
$[\Lambda:\Lambda_1]\leq J(n)$, $\Lambda_1/\Lambda_2$ is a product of finite simple groups, $\Lambda_2/\Lambda_3$ abelian, and $\Lambda_3$ 
is an $\ell$-group if $\ell>0$, and trivial otherwise.

Since $\Lambda_3$ is unipotent (every element is annihilated by $X^{\ell^m}-1 = (X-1)^{\ell^m}$ for
some~$m \in \mathbb{N}$), it is conjugate to a subgroup of the group of upper triangular matrices with diagonal $(1,\dots,1)$,
and hence has derived length at most $n-1$, cf.~\cite[p.~87]{Borel}.
This implies that $l(\Lambda_3)\leq n-1$.
Putting everything together we get from Lemma \ref{lem:log_length} and Proposition \ref{prop:lengthextension} that
$l(\Lambda)\leq c:=\log_2(J(n))+1+n$.

Now let $M=\prod_{\rho\in\Omega}K_\rho$ be the compositum.
Then $L\subseteq M$.
Since $l(\Gal(K_\rho/K))\leq c$ for each $\rho\in\Omega$, by
Proposition \ref{prop:lengthN} we get $l(\Gal(M/K))\leq c<\infty$.
Hence, by Corollary \ref{cor:hextension}, $L$ is Hilbertian.
\end{proof}

Recall that if $E/K$ is an elliptic curve then the action of $\Gal(K)$ on the $p$-torsion points $E[p]$ of $E$  induces a Galois representation $\rho_{E,p}\colon \Gal(K)\to \GL_2(\FF_p)$ and the fixed field $K_{\rho_{E,p}}$ of $\ker \rho$ is exactly the field one gets by adjoining the $p$-torsion points of $E$. Hence the following corollary follows from Theorem \ref{thm:allrep}:

\begin{cor}
Let $K$ be a Hilbertian field.
Denote by $K^{\rm tor}$ the field obtained from $K$ by adjoining for each prime number $p$ all the $p$-torsions points $E[p]$
of {\em all} elliptic curves $E/K$. 
Then every subfield of $K^{\rm tor}$ that contains $K$ is Hilbertian.
\end{cor}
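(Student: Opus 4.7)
The plan is to observe that this corollary is essentially a direct instantiation of Theorem~\ref{thm:allrep} with $n=2$, and the work amounts to matching the objects. First I would fix the dimension $n=2$ and let $\Omega$ denote the family of all finite $2$-dimensional representations of $\Gal(K)$ as in Theorem~\ref{thm:allrep}. For every elliptic curve $E/K$ and every prime $p$, the representation $\rho_{E,p}\colon \Gal(K)\to\GL_2(\FF_p)$ has image in the finite group $\GL_2(\FF_p)$, so it is a finite $2$-dimensional representation of $\Gal(K)$ and hence an element of $\Omega$. Moreover, as recalled just before the statement, its fixed field is $K_{\rho_{E,p}}=K(E[p])$.

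Next I would compare the relevant compositums. By definition $K^{\rm tor}$ is the compositum over all pairs $(E,p)$ of the fields $K(E[p])=K_{\rho_{E,p}}$, and since every $\rho_{E,p}$ lies in $\Omega$, we have
\[
K^{\rm tor} \;\subseteq\; M \;:=\; \prod_{\rho\in\Omega} K_\rho,
\]
equivalently the subgroup $\bigcap_{\rho\in\Omega}\ker\rho$ fixes $K^{\rm tor}$, and therefore fixes every intermediate field $K\subseteq L\subseteq K^{\rm tor}$. Any such $L$ is an algebraic extension of $K$ (being contained in the algebraic extension $K^{\rm tor}/K$) fixed by $\bigcap_{\rho\in\Omega}\ker\rho$, so Theorem~\ref{thm:allrep} applied with $n=2$ immediately yields that $L$ is Hilbertian. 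There is no real obstacle here: the content of the corollary is the verification that the Galois representations coming from torsion of elliptic curves form a subfamily of $\Omega$, which is immediate from dimensions and finiteness of $\GL_2(\FF_p)$.
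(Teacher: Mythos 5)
Your proof is correct and is exactly the argument the paper intends: identifying each $\rho_{E,p}$ as a finite $2$-dimensional representation with fixed field $K(E[p])$, observing $K^{\rm tor}\subseteq\prod_{\rho\in\Omega}K_\rho$, and invoking Theorem~\ref{thm:allrep} with $n=2$. The paper states the corollary with essentially this one-line justification, so your write-up just makes the matching of definitions explicit.
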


Of course, a similar result holds true for all abelian varieties over $K$ of a bounded dimension $d$.

\subsection{Solvable extensions}

A separable algebraic extension $L/K$ is \textbf{solvable} if there exists a Galois extension $N/K$ such that $L\subseteq N$ and $\Gal(N/K)$ is solvable. For example, if $K\subseteq K_1\subseteq\dots\subseteq K_n$ is a tower of abelian extensions, then $K_n/K$ is solvable
(note that the Galois group of the Galois closure of $K_n/K$ has derived length at most $n$).
On the other hand, the maximal pro-solvable extension $\mathbb{Q}^{sol}$ of $\mathbb{Q}$ is not solvable.

Let $K$ be a Hilbertian field and $L/K$ a solvable extension. 
If $L/K$ is Galois, then $L$ can be obtained from $K$ by finitely many abelian steps. 
Thus an immediate application of Kuyk's theorem gives that if $K$ is Hilbertian, then $L$ is Hilbertian. To the best of our knowledge, the same result for non-Galois solvable extensions $L/K$ was out of reach of the previously known Hilbertianity criteria.

\begin{thm}
Let $K$ be a Hilbertian field and $L/K$ a separable algebraic extension. Assume that $L/K$ is solvable. Then $L$ is Hilbertian.
\end{thm}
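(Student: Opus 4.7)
The plan is to reduce this directly to Corollary \ref{cor:hextension}. By the definition of solvability, there exists a Galois extension $N/K$ with $L\subseteq N$ and $\Gal(N/K)$ solvable in the usual sense, i.e.\ its ordinary derived series terminates at $1$ after finitely many steps.

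The crucial observation is that any solvable profinite group has finite abelian-simple length. Indeed, from the definition $D(G)=D_0(G)\cap G'$ we immediately get $D(G)\leq G'$, and iterating this inclusion yields $G^{(i)}\leq G^{(i)}_{\mathrm{der}}$ for every $i$, where $G^{(i)}_{\mathrm{der}}$ denotes the $i$-th term of the ordinary derived series. Hence if the derived series of $\Gal(N/K)$ reaches $1$ in $r$ steps, then $l(\Gal(N/K))\leq r<\infty$. (Alternatively one can invoke the example in Section~\ref{sec:gds} directly, noting that a solvable profinite group is pro-solvable so its generalized derived series coincides with its derived series.)

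Having established that $\Gal(N/K)$ has finite abelian-simple length, Corollary~\ref{cor:hextension} tells us that $N/K$ is an $\mathcal{H}$-extension, meaning every intermediate field $K\subseteq M\subseteq N$ is Hilbertian. Since $L$ is precisely such an intermediate field, we conclude that $L$ is Hilbertian.

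I do not expect any real obstacle here: all the hard work has already gone into Theorem~\ref{thm:bdgdlimplieshilb} and its corollary, and the point of this application is simply to verify that the classical notion of solvability is a special case of finite abelian-simple length. The only thing one has to be mildly careful about is not to conflate ``solvable'' with ``pro-solvable''; the definition given in the theorem is the stronger one (finite derived length), which is exactly what feeds into the machinery.
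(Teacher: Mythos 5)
Your proposal is correct and follows the same route as the paper: identify the solvable Galois closure $N/K$, observe that finite derived length of $\Gal(N/K)$ implies finite abelian-simple length (either via $D(G)\le G'$ and induction, or via the example noting the two series coincide for pro-solvable groups), and apply Corollary~\ref{cor:hextension}. You have merely spelled out the details of the implication that the paper states in a single sentence.
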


\begin{proof}
Let $N/K$ be a solvable Galois extension with $L\subseteq N$. Then $N/K$ is of finite derived length, thus of finite abelian-simple length. By Corollary~\ref{cor:hextension}, $L$ is Hilbertian.
\end{proof}

\subsection{Extensions of bounded degree}
Let $K$ be a Hilbertian field, $d$ a fixed integer, and $\{N_i \mid i\in I\}$ a family of finite extensions of $K$ of \emph{degree at most $d$}. 
Let $N=\prod_{i\in I} N_i$ be the compositum. If each $N_i$ is Galois over $K$, then, using Haran's diamond theorem, it is rather easy to deduce that $N$ is Hilbertian. It seems that the same statement in general, i.e.\ when  the $N_i/K$ are not necessarily Galois, is more difficult to achieve, and was unknown.
However, it follows immediately from the following stronger result.

\begin{thm}\label{thm:bd}
Let $K$ be a Hilbertian field, let $d$ be a fixed integer, and let $N$ be the compositum of all extensions of $K$ of degree at most $d$
in some fixed algebraic closure of $K$. Then 
every intermediate field $K\subseteq L \subseteq N$ is Hilbertian.
\end{thm}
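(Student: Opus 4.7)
The plan is to apply Corollary~\ref{cor:hextension} to $N_s/K$, where $N_s$ denotes the compositum (in the fixed algebraic closure) of all \emph{separable} extensions of $K$ of degree at most $d$. Since the class of separable extensions of $K$ of degree $\leq d$ is stable under the Galois action, $N_s/K$ is Galois (and separable by construction). Moreover, for any extension $F/K$ of degree $\leq d$, the maximal separable subextension $F\cap K_s$ has degree dividing $[F:K]\leq d$, so it lies in $N_s$, while $F/(F\cap K_s)$ is purely inseparable; taking compositum it follows that $N/N_s$ is purely inseparable.

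Next I would perform the reduction step. Fix an intermediate field $K\subseteq L\subseteq N$ and set $L_s=L\cap N_s$. Then $L/L_s$ is purely inseparable, so by \cite[Proposition~12.3.3]{FriedJarden} (already used in the proof of Theorem~\ref{thm:main}) the Hilbertianity of $L$ reduces to that of $L_s$. Since $L_s$ is an arbitrary intermediate field of $N_s/K$, it therefore suffices to show that $N_s/K$ is an $\mathcal{H}$-extension.

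To this end I would bound the abelian-simple length of $\Gamma:=\Gal(N_s/K)$ in terms of $d$. For each separable extension $F/K$ of degree $n\leq d$, the Galois closure $\hat{F}/K$ satisfies $\Gal(\hat{F}/K)\hookrightarrow S_n\leq S_d$, so $|\Gal(\hat{F}/K)|\leq d!$, and Lemma~\ref{lem:log_length} yields $l(\Gal(\hat{F}/K))\leq \log_2(d!)$. As $\hat{F}$ ranges over the Galois closures of all such $F$, the subgroups $\Gal(N_s/\hat{F})$ form a family of closed normal subgroups of $\Gamma$ with trivial intersection, because $N_s$ is the compositum of the $\hat{F}$. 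Proposition~\ref{prop:lengthN} then gives
\[
l(\Gamma)\;=\;\sup_F l(\Gal(\hat{F}/K))\;\leq\;\log_2(d!)\;<\;\infty.
\]
Thus $N_s/K$ is Galois of finite abelian-simple length, and Corollary~\ref{cor:hextension} implies that $N_s/K$ is an $\mathcal{H}$-extension, finishing the proof.

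No step presents a real obstacle: the separable/inseparable reduction is standard, the $d!$ bound on Galois closures is classical, and the heavy lifting has already been carried out in Corollary~\ref{cor:hextension} together with the length bounds of Section~\ref{sec:gds}. If anything, the only point requiring slight care is the verification that the chosen family $\{\Gal(N_s/\hat{F})\}$ has trivial intersection, but this is immediate from the definition of $N_s$ as a compositum.
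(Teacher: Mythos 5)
Your proof is correct and follows essentially the same strategy as the paper: reduce to the separable case via \cite[Proposition~12.3.3]{FriedJarden}, bound the abelian-simple length of the relevant Galois group by $\log_2(d!)$ using Lemma~\ref{lem:log_length} and Proposition~\ref{prop:lengthN}, then conclude by Corollary~\ref{cor:hextension}. The one minor cosmetic difference is that the paper enlarges $N_0$ to the compositum $N_1$ of \emph{all} Galois extensions of degree at most $d!$ before applying the length bound, whereas you observe directly that $N_s$ is already the compositum of the Galois closures $\hat F$ (each of degree at most $d!$), which is a slightly more economical way to reach the same conclusion.
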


\begin{proof}
Since a purely inseparable extension of a Hilbertian field is Hilbertian \cite[Proposition 12.3.3]{FriedJarden}, we can assume without loss of generality that $L$ is contained in the compositum $N_0$ of all separable extensions of $K$ of degree at most $d$. 
Since a separable extension of degree at most $d$ is contained in a Galois extension of degree at most $d!$, 
$N_0$ is contained in the compositum $N_1$ of all Galois extensions of $K$ of degree at most $d!$. Since the abelian-simple length of a Galois extension of degree at most $d!$ is at most $\log_2(d!)$ (Lemma~\ref{lem:log_length}), by Proposition~\ref{prop:lengthN} the abelian-simple length of $\Gal(N_1/K)$ is at most $\log_2(d!)$. Thus by Corollary~\ref{cor:hextension}, $L$ is Hilbertian. 
\end{proof}

\subsection{Rational points on varieties}

Let $K$ be a Hilbertian field and let $V$ be an affine $K$-variety of dimension $n$.
By Theorem \ref{thm:bd} there exists an $\mathcal{H}$-extension $N/K$ such that $V(N)$ is Zariski-dense in $V$.
Indeed, by the Noether normalization lemma there is a finite morphism $f:V\rightarrow\mathbb{A}_K^n$,
and if the degree of $f$ is $d$, then every point in $\mathbb{A}_K^n(K)$ is the image of a point of $V(\bar{K})$ of degree at most $d$,
so if $N$ denotes the compositum of all extensions of $K$ of degree at most $d$, then $\mathbb{A}_K^n(K)\subseteq f(V(N))$
and $V(N)$ is Zariski-dense in $V$.
Actually, if $K$ is countable, one can find one $\mathcal{H}$-extension that works for all $K$-varieties $V$ simultaneously:
Recall that a field $K$ is called {\bf pseudo algebraically closed} if for every absolutely irreducible $K$-variety $V$,
the set of $K$-rational points $V(K)$ is Zariski-dense in $V$,
cf.~\cite[Chapter 11]{FriedJarden}.

\begin{thm}
Every countable Hilbertian field $K$ has a Galois extension $M/K$ such that $M$ is pseudo algebraically closed
and every intermediate field $K\subseteq L\subseteq M$ is Hilbertian.
\end{thm}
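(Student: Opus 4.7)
The plan is to exhibit $M$ as the compositum of countably many Galois $\mathcal{H}$-extensions $K_n/K$, one associated with each absolutely irreducible variety over a finite subextension of $\bar K/K$, arranged so as to be linearly disjoint over a fixed $\mathcal{H}$-extension $E/K$; Proposition~\ref{lem:obs} will then assemble them into a single $\mathcal{H}$-extension. Since $M/K$ is algebraic, every absolutely irreducible $M$-variety is defined over a finite extension of $K$ inside $M$, so an exhaustive countable enumeration of such pairs $(E_n,V_n)$ suffices to secure the PAC property.

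Since $K$ is countable, so is $\bar K$, and I would begin by enumerating all pairs $(E_n,V_n)_{n\geq 1}$ where $E_n\subseteq\bar K$ is a finite extension of $K$ and $V_n$ is an absolutely irreducible $E_n$-variety. For each $n$, I would carry out the construction from the paragraph preceding the theorem: by Noether normalization obtain a finite surjective morphism $V_n\to\mathbb A^{k_n}_{E_n}$ of some degree $d_n$, let $\tilde E_n$ be the Galois closure of $E_n/K$, and let $K_n^{\circ}$ be the compositum of $\tilde E_n$ with all extensions of $\tilde E_n$ in $\bar K$ of degree at most $d_n$. Then $K_n^{\circ}/K$ is Galois, and Lemma~\ref{lem:log_length} applied to $\tilde E_n/K$ together with the length bound on $\mathrm{Gal}(K_n^{\circ}/\tilde E_n)$ from the proof of Theorem~\ref{thm:bd} and Proposition~\ref{prop:lengthextension} give that $\mathrm{Gal}(K_n^{\circ}/K)$ has finite abelian-simple length, whence $K_n^{\circ}/K$ is an $\mathcal{H}$-extension by Corollary~\ref{cor:hextension}. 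Hilbertianity of the finite extension $\tilde E_n$ of $K$ renders $\mathbb A^{k_n}(\tilde E_n)$ Zariski-dense, and each such point lifts through the finite morphism to a point of $V_n$ of degree at most $d_n$ over $\tilde E_n$, hence lying in $K_n^{\circ}$; so $V_n(K_n^{\circ})$ is Zariski-dense in $V_n$.

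To combine the $K_n^{\circ}$ via Proposition~\ref{lem:obs}, I would fix an $\mathcal{H}$-extension $E/K$ (for example $E=K^{\mathrm{ab}}$, an $\mathcal{H}$-extension by Kuyk's theorem) and proceed inductively, replacing $K_n^{\circ}$ by a refined $K_n$ so that the fields $K_nE$ become linearly disjoint over $E$. Assuming $K_1,\dots,K_{n-1}$ have already been built with compositum $F_{n-1}$ an $\mathcal{H}$-extension of $K$ (inductively, by Proposition~\ref{lem:obs}) and hence Hilbertian, I would apply Hilbert's irreducibility theorem over $F_{n-1}$ to the Galois closure of the generic cover $V_n\to\mathbb A^{k_n}_{E_n}$ to produce a Zariski-dense family of specializations whose associated finite Galois extensions of $\tilde E_n$ are linearly disjoint from $F_{n-1}$ over $E$; taking $K_n$ as the compositum of sufficiently many such extensions preserves the Zariski-density of $V_n(K_n)$. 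Proposition~\ref{lem:obs} then yields that $M=\prod_n K_n$ is an $\mathcal{H}$-extension of $K$; it is Galois over $K$ as a compositum of Galois extensions, and PAC since every absolutely irreducible $M$-variety appears as some $V_n$ and $V_n(M)\supseteq V_n(K_n)$ is Zariski-dense.

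The hardest step will be this inductive linear-disjointness construction, since one must simultaneously preserve Galoisness of $K_n/K$, finite abelian-simple length of $\mathrm{Gal}(K_n/K)$, Zariski-density of $V_n(K_n)$, and linear disjointness of $K_nE$ from $F_{n-1}$ over $E$. Setting up the Hilbert-irreducibility argument precisely—identifying the Galois group of the generic cover $V_n\to\mathbb A^{k_n}_{E_n}$ after base change, choosing $E$ large enough to absorb awkward intersections with $\tilde E_n$, and verifying that the chosen specializations land in a suitable Hilbert subset of the Hilbertian field $F_{n-1}$—is the delicate technical heart of the argument.
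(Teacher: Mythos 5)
Your proposal diverges sharply from the paper's proof, and it contains a genuine gap at its foundation.

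The paper's proof is a one‑liner: it invokes Fried--Jarden, Theorem~18.10.3, which already produces a Galois extension $M/K$ that is pseudo algebraically closed with $\Gal(M/K)\cong\prod_{n=1}^\infty S_n$, observes that each $S_n$ has abelian-simple length at most~$3$ (so $\prod_n S_n$ does too, by Proposition~\ref{prop:lengthN}), and then applies Corollary~\ref{cor:hextension}. You instead try to \emph{rebuild} such an $M$ from first principles by gluing together specializations of covers; this is in effect re‑proving Theorem~18.10.3 (whose actual proof is long and delicate), which is far more than the theorem requires once that reference is available.

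More seriously, the very first step of your construction cannot work as stated. You enumerate \emph{all} pairs $(E_n,V_n)$ with $E_n$ an arbitrary finite extension of $K$ in $\bar K$, and for each $n$ you build a field $K_n$ containing the Galois closure $\tilde E_n$ of $E_n/K$. But then $M=\prod_n K_n\supseteq\prod_n\tilde E_n$, and since every finite Galois extension of $K$ occurs among the $\tilde E_n$, this forces $M$ to contain the separable closure $K_s$. Such an $M$ is certainly not contained in any $\mathcal{H}$-extension of $K$ (a countable separably closed field is not Hilbertian), so the claimed conclusion is false for this $M$. The error is in the reduction to finite base fields: pseudo algebraic closure of $M$ only concerns varieties defined over $M$ itself, i.e.\ over finite subextensions $E\subseteq M$, not over arbitrary finite extensions of $K$. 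Since $M$ is what you are trying to build, you cannot fix the list of relevant $(E_n,V_n)$ in advance; you would need a genuine bootstrapping/diagonalization in which new base fields and varieties are fed into the enumeration only once they have actually appeared inside the tower constructed so far. This circularity is exactly what the proof of Fried--Jarden Theorem~18.10.3 handles, and your outline does not address it. Relatedly, the plan to make all $K_nE$ linearly disjoint over a \emph{fixed} $\mathcal{H}$-extension $E$ cannot be salvaged by ``choosing $E$ large enough to absorb awkward intersections with $\tilde E_n$'': since the $\tilde E_n$ generate $K_s$, no $\mathcal{H}$-extension $E$ can contain them all.

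Two further points you would need to resolve even after repairing the enumeration: (i) the specialization parameter is taken from a Hilbert subset of $F_{n-1}$, but the specialized extensions are then extensions of $F_{n-1}$ rather than of $\tilde E_n$, which conflicts with the Zariski-density requirement for $V_n(K_n)$ (the relevant points must be defined over $\tilde E_n$-conjugate fields); and (ii) taking a compositum of arbitrary specializations does not automatically give a field Galois over $K$, so one must close under conjugation at each step while keeping all the other constraints (bounded abelian-simple length, linear disjointness, Zariski-density). None of these are show-stoppers in principle, but together they amount to a substantial technical argument that your sketch does not carry out and that the paper deliberately avoids by citing the existing theorem.
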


\begin{proof}
By \cite[Theorem 18.10.3]{FriedJarden} there exists a Galois extension $M$ of $K$ which is pseudo algebraically closed,
and $\Gal(M/K)\cong\prod_{n=1}^\infty S_n$, where $S_n$ is the symmetric group of degree $n$. Since each $S_n$ has abelian-simple length at most $3$,
so has $\prod_{n=1}^\infty S_n$ by Proposition \ref{prop:lengthN}.
By Corollary \ref{cor:hextension}, every $L$ as in the theorem is Hilbertian.
\end{proof}

A conjecture of Frey and Jarden in \cite{FreyJarden} states that every abelian variety over $\mathbb{Q}$ acquires infinite rank over $\mathbb{Q}^{\rm ab}$, the maximal abelian extension of $\mathbb{Q}$. We cannot prove this but instead show that there is some other $\mathcal{H}$-extension with this property:

\begin{cor}
There exists an algebraic extension $M$ of $\mathbb{Q}$ such that 
every subfield of $M$ is Hilbertian and
every nonzero abelian variety $A/\mathbb{Q}$ acquires infinite rank over $M$.
\end{cor}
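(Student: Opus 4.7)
The plan is straightforward: apply the preceding theorem to the countable Hilbertian field $K=\mathbb{Q}$. This immediately yields a Galois extension $M/\mathbb{Q}$ that is pseudo algebraically closed and such that every subfield of $M$ containing $\mathbb{Q}$ is Hilbertian. What remains is to verify that $\mathrm{rank}\,A(M)=\infty$ for every nonzero abelian variety $A$ defined over $\mathbb{Q}$.

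For this step I would invoke the classical theorem of Frey-Jarden \cite{FreyJarden}, which asserts that every nonzero abelian variety over a PAC field of characteristic zero has infinite Mordell-Weil rank. Since $\mathbb{Q}\subseteq M$ the field $M$ has characteristic zero, and it is PAC by the construction of the previous theorem. The PAC hypothesis moreover guarantees that $A(M)$ is Zariski-dense in $A$, hence in particular nontrivial, so Frey-Jarden applies and gives $\mathrm{rank}\,A(M)=\infty$.

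In essence there is no real obstacle: the corollary is simply the conjunction of the previous theorem (which produces an $M$ that is simultaneously PAC over $\mathbb{Q}$ and an $\mathcal{H}$-extension) with the external infinite-rank theorem for PAC fields of characteristic zero. The only bookkeeping is to check that the chosen $M$ satisfies the hypotheses of that external theorem, which as noted is automatic from the PAC property and the inclusion $\mathbb{Q}\subseteq M$. All of the new Hilbertianity input provided by this paper is already absorbed into the previous theorem.
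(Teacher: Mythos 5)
Your approach is exactly the paper's: take $M$ from the previous theorem (PAC, Galois over $\mathbb{Q}$, every intermediate field Hilbertian) and then quote a theorem guaranteeing infinite rank for nonzero abelian varieties over PAC fields of characteristic zero. The structure is correct and complete.

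The one substantive issue is the attribution of the infinite-rank input. You credit it to Frey--Jarden \cite{FreyJarden}, but that 1974 paper does not prove that \emph{every} PAC field of characteristic zero yields infinite rank; it establishes an ``almost all $\sigma\in\Gal(K)^e$'' version for the fixed fields $\widetilde{K}(\sigma)$ (which happen to be PAC), and it is also where the conjecture about $\mathbb{Q}^{\mathrm{ab}}$ is stated. The statement you actually need --- every nonzero abelian variety over a PAC (indeed, ample) field of characteristic zero has infinite rank --- is the theorem of Fehm and Petersen \cite{FehmPetersenRank}, which is what the paper cites. Replace the citation and the argument is exactly right; as you note, no further Hilbertianity input is needed, since that is all packaged in the preceding theorem.
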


\begin{proof}
By \cite{FehmPetersenRank}, every nonzero abelian variety over a pseudo algebraically closed field of characteristic zero has infinite rank.
\end{proof}

\subsection{Free profinite groups}
Using the theory of pseudo algebraically closed fields we get the following freeness criterion for subgroups of the free profinite group of countable rank $\hat{F}_\omega$, cf.~\cite[Chapter 17]{FriedJarden}.

\begin{thm}\label{thm:free}
Let $N\leq M\leq\hat{F}_\omega$ be closed subgroups such that $N$ is normal in $\hat{F}_\omega$ and $\hat{F}_\omega/N$ is of finite abelian-simple length.
Then $M\cong\hat{F}_\omega$.
\end{thm}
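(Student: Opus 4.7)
The plan is to reduce this purely group-theoretic statement to the theory of pseudo algebraically closed fields and then invoke Corollary~\ref{cor:hextension}. The starting point is the classical realization of $\hat{F}_\omega$ as the absolute Galois group of a countable Hilbertian pseudo algebraically closed field $K$ (see \cite[Chapter 23]{FriedJarden}; such a $K$ exists, for example as a suitable algebraic extension of $\QQ$ chosen via Haar measure on $\Gal(\QQ)$). Fix such a $K$ and an identification $\Gal(K)\cong\hat{F}_\omega$, and let $K_s$ be the separable closure of $K$.

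Next, I would translate the hypotheses on $N$ and $M$ through the Galois correspondence. Let $K_N\subseteq K_s$ be the fixed field of $N$ and $L_M\subseteq K_s$ the fixed field of $M$, so that $K\subseteq L_M\subseteq K_N\subseteq K_s$. Since $N$ is normal in $\Gal(K)$, the extension $K_N/K$ is Galois with group $\hat{F}_\omega/N$, which by assumption is of finite abelian-simple length. Corollary~\ref{cor:hextension} therefore applies and shows that $K_N/K$ is an $\mathcal{H}$-extension; in particular the intermediate field $L_M$ is Hilbertian. Moreover, as an algebraic extension of the PAC field $K$, the field $L_M$ is itself PAC (Ax's theorem, \cite[Corollary 11.2.5]{FriedJarden}), and it is countable because it is contained in $K_s$.

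Finally, I would appeal to the theorem that every countable Hilbertian PAC field has absolute Galois group isomorphic to $\hat{F}_\omega$ (\cite[Theorem 23.1.1]{FriedJarden}). Applied to $L_M$ this yields
\[
M=\Gal(K_s/L_M)\cong\hat{F}_\omega,
\]
which is the conclusion of the theorem.

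The substantive work has already been carried out in Section~\ref{sec:criterion}, so no new obstacles arise here; the only thing to be careful about is to check that the precise formulations of the classical PAC results being cited (existence of a countable Hilbertian PAC field with the prescribed absolute Galois group, permanence of PAC under algebraic extensions, and the Fried--Völklein-type characterization of $\hat{F}_\omega$) are stated in a form that applies directly, without any finite generation or $\omega$-freeness hypothesis that would need to be verified separately for $L_M$.
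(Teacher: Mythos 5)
Your proof is correct and takes essentially the same route as the paper: realize $\hat{F}_\omega$ as the absolute Galois group of a countable Hilbertian PAC field, pass through the Galois correspondence, use Corollary~\ref{cor:hextension} to get Hilbertianity of the fixed field of $M$, use Ax's theorem for PAC, and invoke the characterization of $\hat{F}_\omega$ among Galois groups of countable Hilbertian PAC fields. The only difference is cosmetic (you cite Fried--Jarden for the countable-Hilbertian-PAC characterization where the paper cites Jarden's \emph{Algebraic Patching}).
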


\begin{proof}
Recall that a countable pseudo algebraically closed field $K$ is Hilbertian if and only if $\Gal(K)\cong\hat{F}_\omega$, see \cite[Proposition 5.10.1 and Theorem 5.10.3]{Patching}.
Let $K$ be such a countable Hilbertian pseudo algebraically closed field of characteristic 0, cf.~\cite[Example 5.10.7, 2nd paragraph]{Patching}.

Now view $N\leq M$ as subgroups of $\Gal(K)\cong\hat{F}_\omega$ and
let $E$ (respectively $L$) be the fixed field of $N$ (respectively $M$) in an algebraic closure of $K$. 
Then $\Gal(E/K) = \hat{F}_\omega/N$ is of finite abelian-simple length and $L\subseteq E$. 
Thus $L$ is Hilbertian by Corollary~\ref{cor:hextension}, pseudo algebraically closed by \cite[Corollary 11.2.5]{FriedJarden}, and countable. 
Hence, $M=\Gal(L)\cong \hat{F}_\omega$, as claimed. 
\end{proof}

We conclude by pointing out without a complete proof that a more general statement 
holds in the category of pro-$\mathcal{C}$ groups, where $\mathcal{C}$ is an arbitrary Melnikov formation, cf.~\cite[Section 17.3]{FriedJarden}.
The following result can be deduced from Theorem \ref{thm:free} along the lines of \cite[second paragraph of proof of Theorem 3.1]{BarySoroker}.

\begin{cor}
Let $\mathcal{C}$ be a Melnikov formation, let $F=\hat{F}_\omega(\mathcal{C})$ be the free pro-$\mathcal{C}$ group of countable rank and let $N\leq M\leq F$ be closed subgroups. Assume that $N$ is normal in $F$, $F/N$ is of finite abelian-simple length, and $M$ is pro-$\mathcal{C}$. Then $M\cong \hat{F}_\omega(\mathcal{C})$.
\end{cor}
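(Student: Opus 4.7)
The plan is to reduce the statement to Theorem~\ref{thm:free} by pulling the setup back to the free profinite group of countable rank. Let $\pi\colon\hat{F}_\omega\twoheadrightarrow F=\hat{F}_\omega(\mathcal{C})$ denote the canonical projection, set $\tilde{N}:=\pi^{-1}(N)$ and $\tilde{M}:=\pi^{-1}(M)$, and let $K:=\ker\pi$. Since $\pi$ is surjective and $N\lhd F$, the subgroup $\tilde{N}$ is closed and normal in $\hat{F}_\omega$, and $K\leq\tilde{N}\leq\tilde{M}\leq\hat{F}_\omega$. Moreover $\hat{F}_\omega/\tilde{N}\cong F/N$ is of finite abelian-simple length by hypothesis, so Theorem~\ref{thm:free} applies to the chain $\tilde{N}\leq\tilde{M}\leq\hat{F}_\omega$ and yields $\tilde{M}\cong\hat{F}_\omega$.

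Since $M=\tilde{M}/K$ is pro-$\mathcal{C}$, the surjection $\tilde{M}\twoheadrightarrow M$ factors through the maximal pro-$\mathcal{C}$ quotient of $\tilde{M}$. As $\tilde{M}\cong\hat{F}_\omega$ has countable rank, this maximal pro-$\mathcal{C}$ quotient is isomorphic to $\hat{F}_\omega(\mathcal{C})$, so there is a continuous surjection $\phi\colon\hat{F}_\omega(\mathcal{C})\twoheadrightarrow M$. It then remains to verify that $\phi$ is an isomorphism, or equivalently that $K$ already coincides with the pro-$\mathcal{C}$-kernel of $\tilde{M}$ (one inclusion being immediate from $M$ being pro-$\mathcal{C}$).

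For this last step I would follow the strategy of \cite[second paragraph of the proof of Theorem~3.1]{BarySoroker}, applying a standard freeness criterion of Iwasawa type: a projective pro-$\mathcal{C}$ group of rank at most $\aleph_0$ is isomorphic to $\hat{F}_\omega(\mathcal{C})$ provided every finite embedding problem with kernel in $\mathcal{C}$ has a solution. Projectivity of $M$ follows from $M\leq F$ together with the Melnikov-formation axioms on $\mathcal{C}$, and any finite embedding problem for $M$ in $\mathcal{C}$ can be pulled back through $\pi$ to an embedding problem for $\tilde{M}\cong\hat{F}_\omega$ and solved there by freeness. The crux, and the main obstacle, is the descent: one must show that solutions obtained in $\tilde{M}$ factor through the quotient $\tilde{M}/K=M$. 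This uses the closure of $\mathcal{C}$ under quotients, normal subgroups, and extensions to ensure that the relevant open normal subgroups of $\tilde{M}$ that appear in the construction contain $K$, which is precisely where the Melnikov-formation hypothesis enters essentially.
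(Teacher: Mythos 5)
Your strategy is exactly the one the paper points at (the paper itself gives no proof, only the remark that the corollary follows ``along the lines of'' Bary-Soroker's Theorem~3.1): pull back through $\pi\colon\hat F_\omega\twoheadrightarrow F$, apply Theorem~\ref{thm:free} to $\tilde N\leq\tilde M\leq\hat F_\omega$ to conclude $\tilde M\cong\hat F_\omega$, and descend modulo $K=\ker\pi$. Your second paragraph correctly isolates the one non-formal step, namely that $K$ must equal the $\mathcal{C}$-kernel of $\tilde M$. But that is where the proposal stops being a proof: the third paragraph merely restates the same obstacle in the language of embedding problems (you must arrange $K\leq\ker\tilde\psi$ for the lifted solution) and then asserts without argument that the Melnikov axioms ``ensure'' this. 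That is the heart of the matter, not a routine verification, and once you actually do it the detour through the Iwasawa criterion and projectivity of $M$ becomes superfluous.

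Here is the missing argument. First, the $\mathcal{C}$-residual $K$ of $\hat F_\omega$ is $\mathcal{C}$-perfect, i.e.\ has no nontrivial pro-$\mathcal{C}$ quotient: if $U\lhd K$ were open with $K/U\in\mathcal{C}$ nontrivial, choose a simple quotient $S$ of $K/U$ and set $W=\bigcap\{V\lhd K\text{ open}: K/V\cong S\}$; then $W$ is characteristic in $K$ and hence normal in $\hat F_\omega$, $K/W$ is a (possibly infinite) direct product of copies of $S$ and hence pro-$\mathcal{C}$, so $\hat F_\omega/W$ is pro-$\mathcal{C}$ by closure of $\mathcal{C}$ under extensions, contradicting minimality of $K$. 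Now if $L\lhd\tilde M$ is closed with $\tilde M/L$ pro-$\mathcal{C}$, then $K/(K\cap L)\cong KL/L$ is a closed normal subgroup of $\tilde M/L$, hence pro-$\mathcal{C}$ by closure of $\mathcal{C}$ under normal subgroups, hence trivial by $\mathcal{C}$-perfectness of $K$; thus $K\leq L$. So $K$ is indeed the $\mathcal{C}$-kernel of $\tilde M\cong\hat F_\omega$, and $M=\tilde M/K$ is the maximal pro-$\mathcal{C}$ quotient of $\hat F_\omega$, which is $\hat F_\omega(\mathcal{C})$ by definition. Note that this is exactly where the assumption that $\mathcal{C}$ is a Melnikov formation (and not merely a formation) is used.
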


\section*{Acknowledgements}

This work was greatly influenced by the papers \cite{Jarden} of Jarden and \cite{Thornhill} of Thornhill.
The authors would like to thank Sebastian Petersen for interesting discussions on this subject
and Dror Speiser for pointing out the application in Section \ref{sec:appl1}. We are also grateful to the referee for her/his helpful suggestions. 
This research was supported by the Lion Foundation Konstanz and the von Humboldt Foundation. L.~B.~S was partially supported by a grant from the GIF, the German-Israeli Foundation for Scientific Research and Development. 
G.~W.~acknowledges partial support by the Priority Program 1489 of the 
Deutsche Forschungsgemeinschaft.

\bibliographystyle{plain}

\begin{thebibliography}{10}

\bibitem{BarySoroker}
Lior Bary-Soroker.
\newblock Diamond theorem for a finitely generated free profinite group.
\newblock {\em Mathematische Annalen}, 336(4):949--961, 2006.

\bibitem{Borel}
Armand Borel.
\newblock {\em Linear Algebraic Groups}.
\newblock Springer, second edition, 1991.

\bibitem{Bourbaki}
Nicolas Bourbaki.
\newblock {\em Lie groups and Lie algebras}.
\newblock Springer, 1989.

\bibitem{Dixon}
J.~D. Dixon, M.~P.~F. du~Sautoy, A.~Mann, and D.~Segal.
\newblock {\em Analytic pro-p groups}.
\newblock Cambridge University Press, second edition, 1999.

\bibitem{FehmJardenPetersen}
Arno Fehm, Moshe Jarden, and Sebastian Petersen.
\newblock {Kuykian Fields}.
\newblock Forum Math. 24(5):1013--1022, 2012.

\bibitem{FehmPetersenRank}
Arno Fehm and Sebastian Petersen.
\newblock On the rank of abelian varieties over ample fields.
\newblock {\em International Journal of Number Theory}, 6(3):579--586, 2010.

\bibitem{FehmPetersen}
Arno Fehm and Sebastian Petersen.
\newblock Division fields of commutative algebraic groups.
\newblock Israel J. Math. 195(1):123--134, 2013.

\bibitem{FreyJarden}
Gerhard Frey and Moshe Jarden.
\newblock Approximation theory and the rank of abelian varieties over large
  algebraic fields.
\newblock {\em Proceedings of the London Mathematical Society}, 28:112--128,
  1974.

\bibitem{FriedJarden}
M.~Fried and M.~Jarden.
\newblock {\em Field Arithmetic}.
\newblock Ergebnisse der Mathematik III {\bf 11}. Springer, 2008.
\newblock {3rd edition, revised by M. Jarden}.

\bibitem{HaranDiamond}
Dan Haran.
\newblock Hilbertian fields under separable algebraic extensions.
\newblock {\em Invent. Math.}, 137(1):113--126, 1999.

\bibitem{Jarden}
Moshe Jarden.
\newblock Diamonds in torsion of abelian varieties.
\newblock  J. Inst. Math. Jussieu 9(3):477--480, 2010.

\bibitem{Patching}
Moshe Jarden.
\newblock {\em Algebraic patching}.
\newblock Springer, 2011.

\bibitem{Lang}
Serge Lang.
\newblock {\em Diophantine Geometry}.
\newblock Interscience Publishers, 1962.

\bibitem{LarsenPink}
Michael~J. Larsen and Richard Pink.
\newblock Finite subgroups of algebraic groups.
\newblock {\em J. Amer. Math. Soc.}, 24(4):1105--1158, 2011.

\bibitem{MalleMatzat}
G.~Malle and B.~H. Matzat.
\newblock {\em Inverse Galois Theory}.
\newblock Springer, 1999.

\bibitem{RibesZalesskii}
Luis Ribes and Pavel Zalesskii.
\newblock {\em Profinite Groups}.
\newblock Springer, 2000.

\bibitem{Serre}
Jean-Pierre Serre.
\newblock {\em Topics in Galois Theory}.
\newblock Jones and Bartlett Publishers, 1992.

\bibitem{SerreMordell}
Jean-Pierre Serre.
\newblock {\em Lectures on the Mordell-Weil theorem}.
\newblock Vieweg, 1997.

\bibitem{Skinner}
Christopher Skinner.
\newblock A note on the $p$-adic {G}alois representations attached to {H}ilbert
  modular forms.
\newblock {\em Documenta Math.}, 14:241--258, 2009.

\bibitem{Thornhill}
Christopher Thornhill.
\newblock Abelian varieties and {G}alois extensions of hilbertian fields.
\newblock {\em J. Inst. Math. Jussieu}, 12(2):237--247, 2013.

\bibitem{Volklein}
Helmut V{\"o}lklein.
\newblock {\em Groups as Galois groups. An introduction.}
\newblock Cambridge University Press, 1996.

\end{thebibliography}

 \textsc{School of Mathematical Sciences, 
Tel Aviv University,
Ramat Aviv,
Tel Aviv 69978,
Israel}

 \textit{E-mail address}: \texttt{barylior@post.tau.ac.il}

\textsc{Universit\"at Konstanz, Fachbereich Mathematik und Statistik, Fach D 203, 78457 Konstanz, Germany}

\textit{E-mail address}: \texttt{arno.fehm@uni-konstanz.de }

\textsc{Universit\'e du Luxembourg,
Facult\'e des Sciences, de la Technologie et de la Communication,
6, rue Richard Coudenhove-Kalergi, 
L-1359 Luxembourg,
Luxembourg}

\textit{E-mail address}:  \texttt{gabor.wiese@uni.lu}

\end{document}